\numberwithin{equation}{section}
\newtheorem{Thm}{Theorem}[section]
\newtheorem{Lem}[Thm]{Lemma}
\newtheorem{slem}[Thm]{Sublemma}
\newtheorem{Pro}[Thm]{Proposition}
\newtheorem{Cor}[Thm]{Corollary}
\newcommand{\R}{{\mathbb R}}
\newcommand{\N}{{\mathbb N}}
\newcommand{\Z}{{\mathbb Z}}
\newcommand{\Hm}{{\mathcal H}}
\newcommand{\cH}{{\mathcal H}}
\newcommand{\bI}{{\mathbf I}}
\newcommand{\M}{{\mathbf M}}
\newcommand{\bN}{{\mathbf N}}
\newcommand{\cB}{{\mathscr B}}
\newcommand{\al}{\alpha}
\newcommand{\del}{\delta}
\newcommand{\Del}{\Delta}
\newcommand{\eps}{\epsilon}
\newcommand{\gam}{\gamma}
\newcommand{\ga}{\gamma}
\newcommand{\Ga}{\Gamma}
\newcommand{\Gam}{\Gamma}
\newcommand{\la}{\lambda}
\newcommand{\lam}{\lambda}
\newcommand{\om}{\omega}
\newcommand{\Om}{\Omega}
\renewcommand{\rho}{\varrho}
\newcommand{\sig}{\sigma}
\newcommand{\Sig}{\Sigma}
\newcommand{\ph}{\varphi}
\newcommand{\area}{\operatorname{area}}
\newcommand{\Jac}{\operatorname{Jac}}
\newcommand{\tr}{\operatorname{tr}}
\newcommand{\length}{\operatorname{length}}
\newcommand{\len}{\operatorname{length}}
\newcommand{\bb}[1]{\llbracket #1\rrbracket} 
\renewcommand{\d}{\partial}
\newcommand{\diam}{\operatorname{diam}}
\newcommand{\cs}{\text{\rm c}} 
\newcommand{\rad}{\operatorname{rad}}
\newcommand{\spt}{\operatorname{spt}}
\newcommand{\CAT}{\operatorname{CAT}}
\newcommand{\sub}{\subset}
\newcommand{\sm}{\setminus}
\newcommand{\vol}{\operatorname{vol}}
\newcommand{\Fillvol}{\operatorname{Fillvol}}
\newcommand{\ic}{{\mathbf c}} 
\newtheorem{introthm}{Theorem}
\newtheorem{introcor}[introthm]{Corollary}
\newcommand{\bcor}{\begin{Cor}}
\newcommand{\ecor}{\end{Cor}}
\newcommand{\ben}{\begin{enumerate}}
\newcommand{\bit}{\begin{itemize}}
\newcommand{\blem}{\begin{Lem}}
\newcommand{\bslem}{\begin{slem}}
\newcommand{\bprop}{\begin{Pro}}
\newcommand{\bthm}{\begin{Theorem}}
\newcommand{\een}{\end{enumerate}}
\newcommand{\eit}{\end{itemize}}
\newcommand{\elem}{\end{Lem}}
\newcommand{\eslem}{\end{slem}}
\newcommand{\eprop}{\end{Pro}}
\newcommand{\ethm}{\end{Theorem}}
\begin{document}

\title[An isoperimetric gap theorem in non-positive curvature]{Minimal tetrahedra and an isoperimetric gap theorem in non-positive curvature}
\author{Cornelia Dru\c{t}u}
\address{Mathematical Institute\\University of Oxford\\Andrew Wiles Building\\
Woodstock Road\\Oxford OX2 6GG\\UK}
\email{drutu@maths.ox.ac.uk}
\author{Urs Lang}
\address{Department of Mathematics\\ETH Z\"urich\\R\"amistrasse 101\\8092 Z\"urich\\Switzerland}
\email{lang@math.ethz.ch}
\author{Panos Papasoglu}
\address{Mathematical Institute\\University of Oxford\\Andrew Wiles Building\\
Woodstock Road\\Oxford OX2 6GG\\UK}
\email{papazoglou@maths.ox.ac.uk}
\author{Stephan Stadler}
\address{Max Planck Institute for Mathematics\\Vivatsgasse 7\\53111 Bonn\\Germany}
\email{stadler@mpim-bonn.mpg.de}
\date{\today}
%

\begin{abstract} 
We investigate isoperimetric inequalities for Lipschitz 2-spheres in CAT(0) spaces, proving bounds on the volume of efficient null-homotopies. In one dimension lower, it is known that a quadratic inequality with a constant smaller than $\ic_2 = 1/(4\pi)$ -- the optimal constant for the Euclidean plane -- implies that the underlying space is Gromov hyperbolic, and a linear inequality holds. We establish the first analogous gap theorem in higher dimensions: if a proper CAT(0) space satisfies a Euclidean inequality for $2$-spheres with a constant below the sharp threshold $\ic_3 = 1/(6\sqrt{\pi})$, then the space also admits an inequality with an exponent arbitrarily close to $1$.
As a corollary we obtain a similar result for Lipschitz surfaces of higher genus. Towards our main theorem we prove a (non-sharp) Euclidean isoperimetric inequality for null-homotopies of $2$-spheres, apparently missing in the literature. A novelty in our approach is the introduction of minimal tetrahedra, which we demonstrate satisfy a linear inequality. 
\end{abstract}

\maketitle


\section{Introduction}

\subsection{Background and main results}

Isoperimetric filling inequalities in dimension $n+1$ control the volume needed in order to fill an $n$-cycle $S$ by an $(n+1)$-chain $V$.
Their specific form is intimately related to the geometry of the underlying metric space. A central role is played by isoperimetric inequalites of the Euclidean type
\[
\vol_{n+1}(V)\leq \mathop{\rm const} \cdot\vol_n(S)^{1+1/n}.
\]
By a fundamental result of Wenger \cite{Wen-EII}, which builds on earlier work of Gromov~\cite{Gro_FRR}, such inequalities hold in particular in all Banach spaces and $\CAT(0)$ spaces. 
(See also~\cite{Schulze} for the optimal inequality for $2$-cycles in Hadamard manifolds.)
It is natural to expect that for $\CAT(0)$ spaces, Euclidean isoperimetric inequalities continue to hold when the classes of admissible cycles and fillings are restricted to Lipschitz spheres and balls, respectively.
However, to our knowledge, for $n > 1$ this is not recorded in the literature, and we will provide the result for $n = 2$ (with a non-optimal constant) in Theorem~\ref{main_3} below.
In the case $n=1$ -- when circles are filled by discs -- much more is known. It has recently been shown that non-positive curvature is equivalent to 
a Euclidean isoperimetric inequality with the sharp constant $\ic_2 := 1/(4\pi)$ \cite{LWcurv, StaWen}. Moreover, a length space
that admits a quadratic isoperimetric inequality for curves with a constant strictly smaller than $\ic_2$ is necessarily Gromov hyperbolic \cite{Wen_sharp, LWY_dehn, StaWen}. These sharp results were predated by the general observation, originally due to Gromov \cite{Gro-HG}, that
length spaces with a subquadratic isoperimetric inequality must in fact satisfy a 
linear isoperimetric inequality~\cite{Ol_sub, Pap_sub, Pap_strong, Bow_sub, Drutu-iso}.

The main result of this paper is a first result of this kind
in higher dimension: a sharp isoperimetric gap theorem for fillings of $2$-spheres by $3$-balls in $\CAT(0)$ spaces.

\begin{introthm}\label{main_1}
For a proper $\CAT(0)$ space $X$, the following are equivalent:
\begin{enumerate}
\item
There exists a constant $c < \ic_3 := 1/(6\sqrt{\pi})$ such that every Lipschitz $2$-sphere $\hat S\subset X$ of large area admits a filling by a Lipschitz $3$-ball $\hat B\subset X$ with volume
\[
\vol(\hat B) \leq c\cdot \area(\hat S)^{3/2}.
\]
\item
For every $\delta > 0$ there exists a constant $C = C(\del)$ such that every Lipschitz $2$-sphere $S\subset X$ extends to a Lipschitz  $3$-ball $B\subset X$ with volume 
\[
\vol(B) \le C \cdot\area(S)^{1+\del}.
\]
\item 
The asymptotic rank of $X$ is at most $2$. 
\end{enumerate}
\end{introthm}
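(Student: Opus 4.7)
The plan is to prove the cycle $(2)\Rightarrow(1)\Rightarrow(3)\Rightarrow(2)$. The implication $(2)\Rightarrow(1)$ is immediate: for any fixed $\del<1/2$, the inequality $C(\del)\,A^{1+\del}<c\,A^{3/2}$ holds once $A\ge(C/c)^{2/(1-2\del)}$, so Lipschitz $2$-spheres of sufficiently large area automatically admit Euclidean fillings with any prescribed constant $c<\ic_3$.

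For $(1)\Rightarrow(3)$, I would argue by contradiction via asymptotic cones. Suppose $\asrk(X)\ge 3$. Then some asymptotic cone $X_\omega$, itself a $\CAT(0)$ space, contains an isometrically embedded, hence geodesically convex, $3$-flat $F\cong\R^3$. Pick a round $2$-sphere $S_r\subset F$ of radius $r$ and lift it, via the sequence of rescalings defining $X_\omega$, to Lipschitz $2$-spheres $\hat S_n\subset X$ of area $\sim r^2\la_n^2\to\infty$. Hypothesis (1) supplies fillings $\hat B_n$ with $\vol(\hat B_n)\le c\,\area(\hat S_n)^{3/2}$ for some $c<\ic_3$. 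Taking the ultralimit in the sense of integral currents produces a $3$-current $B_\omega$ in $X_\omega$ of mass at most $c\,(4\pi r^2)^{3/2}$ with $\d B_\omega=S_r$. Composing with the $1$-Lipschitz orthogonal projection $X_\omega\to F$ yields a filling of $S_r$ inside $F\cong\R^3$ of mass at most $c\,(4\pi r^2)^{3/2}$ and degree $\ge 1$ over the Euclidean ball $B_r$, contradicting the Euclidean isoperimetric inequality $\vol(B_r)=\ic_3\,\area(S_r)^{3/2}$ as soon as $c<\ic_3$.

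The implication $(3)\Rightarrow(2)$ contains the main novelty. The assumption $\asrk(X)\le 2$, together with Theorem~\ref{main_3}, supplies a (possibly non-sharp) Euclidean filling inequality $\vol(\hat B)\le c_0\,\area(\hat S)^{3/2}$ for every Lipschitz $2$-sphere. The strategy is to bootstrap this to exponent $1+\del$ by mimicking the classical Gromov--Ol'shanskii--Papasoglu--Bowditch argument for sub-quadratic-implies-linear inequalities in dimension one, with the paper's \emph{minimal tetrahedra} playing the role of minimal subdividing chords. Given a large Lipschitz $2$-sphere $S$ in $X$, I would cut it by Lipschitz $2$-discs whose total area is controlled via the Euclidean inequality into sub-spheres of strictly smaller area; either all the pieces are substantially smaller than $S$, allowing an inductive step with an improved exponent, or a residual piece is close to a minimal tetrahedron, which admits a \emph{linear} filling inequality by the key technical result announced in the abstract. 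Iterating drives the exponent arbitrarily close to $1$.

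The principal obstacle is $(3)\Rightarrow(2)$: in contrast to dimension one, where a minimal geodesic chord is unambiguous, identifying a suitable two-dimensional cut that both reduces the complexity of $S$ and is cheap in area requires careful geometry, and verifying the linear inequality for minimal tetrahedra -- the decisive base case that terminates the recursion -- is where the bulk of the technical work must lie.
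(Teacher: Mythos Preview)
Your cycle $(2)\Rightarrow(1)\Rightarrow(3)\Rightarrow(2)$ is the right frame, and $(2)\Rightarrow(1)$ is fine.

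For $(1)\Rightarrow(3)$ your strategy is essentially the paper's (Proposition~\ref{Pro:AR}), but there is a genuine gap in the limiting step. The Lipschitz $3$-balls $\hat B_n$ furnished by hypothesis~(1) have controlled volume but no a priori control on their \emph{support}, so ``taking the ultralimit in the sense of integral currents'' is not justified: compactness for integral currents (Theorem~\ref{Thm:cptness}) requires the supports to lie in a fixed compact set, and a Lipschitz ball of small volume can still wander far from its boundary sphere. The paper's fix is to discard the Lipschitz fillings after using them only to bound mass: replace each by a \emph{minimizing} integral-current filling $V_k$ of the induced cycle, whose support is confined to the ball containing $S_k$ by the Plateau solution and its lower density bound (Theorem~\ref{Thm:plateau-n}). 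One then applies Gromov compactness to the supports and weak compactness of currents, rather than an ultralimit construction. Your projection-to-the-flat endgame is then correct.

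For $(3)\Rightarrow(2)$ you have identified the right ingredients---the Euclidean inequality (Theorem~\ref{main_3}) as a starting bound and the linear inequality for minimal tetrahedra (Theorem~\ref{main_4}) as the decisive input---but the mechanism you sketch is not the one that works. There is no dichotomy in which ``a residual piece is close to a minimal tetrahedron''; minimal tetrahedra do not arise by cutting a generic Lipschitz sphere. The paper argues instead by contradiction: assume the $\delta$-inequality fails for every constant $M$, and select a sphere $f$ of (nearly) minimal area among the violators. This $(M,\delta,\eps)$-minimality forces $f$ to satisfy an \emph{intrinsic quadratic isoperimetric inequality} for subdiscs (Lemma~\ref{Lem:round-minimal}), which in turn permits a quantitative disc decomposition of $S^2$ into a controlled number of pieces with bounded boundary length and area (Proposition~\ref{Pro:triangulation}). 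Only \emph{after} this decomposition is $f$ deformed---via homotopies whose volumes are bounded using the $\delta$-inequality for the strictly smaller spheres that appear---into a piecewise-minimal sphere (Lemma~\ref{Lem:triangular_disc}); the minimal tetrahedra then appear by coning the resulting minimal triangles from a single point. The linear bound for tetrahedra, together with the homotopy estimates, gives a filling whose total volume beats $M\cdot\area(f)^{1+\delta}$ once $M$ is large enough, a contradiction. Your ``iterate to drive the exponent down'' picture is closer to the one-dimensional story than to what actually happens here.
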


The notion of {\em asymptotic rank}\/ appearing in the last item stems from Gromov's discussion of a variety of related invariants in~\cite[\S{6.B$_2$}]{Gro-AI}, 
and was further investigated in~\cite{Kle, W_erk, Wen_AR, Des, KleL, GolL, LS_2dim}.
For proper $\CAT(0)$ spaces with cocompact isometry group, the asymptotic rank equals 
the maximal $n$ such that $X$ contains an {\em $n$-flat}, that is, an isometric copy of $\R^n$. 
In particular, the isoperimetric inequality in the second item of Theorem~\ref{main_1} holds for the universal cover $X$ of any compact manifold of non-positive curvature provided that $X$ contains no $3$-flat.
For a general $\CAT(0)$ space $X$, the asymptotic rank is at most $n$ if and only if no asymptotic cone of $X$ contains an $(n+1)$-flat, 
and it is at most $1$ if and only if $X$ is Gromov hyperbolic. 
We refer to the beginning of Section~\ref{Sec:proof} for the actual definition in the general case. Notice that the asymptotic rank is a quasi-isometry invariant.

Gromov conjectured that proper cocompact $\CAT(0)$ spaces of asymptotic rank at most $n$  admit linear isoperimetric inequalities 
\[
\vol_{n+1}(V)\leq \mathop{\rm const} \cdot\vol_n(S)
\]
for fillings of $n$-cycles by $(n+1)$-chains (compare~\cite[\S{6.B$_2$}, p.~128, (b)]{Gro-AI}). 
Apart from the (hyperbolic) rank one case (see~\cite{Lan_LII} for $n > 1$),  this is presently known for symmetric spaces or homogeneous Hadamard manifolds (see~\cite[\S{5.D}, (5)(b$_1'$)]{Gro-AI} and~\cite{Leu,Isl}). 
For general $\CAT(0)$ spaces of asymptotic rank at most $n$, where $n \ge 2$, the best known result in this direction is Wenger's sub-Euclidean inequality~\cite{Wen_AR}, 
stating that every $n$-cycle of mass $s$ admits a filling with mass at most $o(s^{1+1/n})$ as $s \to \infty$.
In particular, since the isoperimetric inequalities in dimensions up to the asymptotic rank are at best Euclidean, the isoperimetric profiles detect the asymptotic rank. 
Furthermore, a family of linear isoperimetric inequalities for $n$-cycles with bounded density quotients was deduced in~\cite{GolL}. 
The result is restated in Theorem~\ref{Thm:fillrad} below and provides in addition a bound on the filling radius, which will enter the proof of Theorem~\ref{main_1}. 
These inequalities, as well as those in~\cite{Wen-EII, Wen_AR}, actually refer to the chain complex of metric integral currents in the sense of Ambrosio--Kirchheim, which comprises the Lipschitz singular chains and has favorable compactness properties
(see Section~\ref{Sec:currents} for further information). 
By contrast, Theorem~\ref{main_1} concerns Lipschitz mappings and null-homotopies of spheres, the use of integral currents will thus be limited to the proofs of two intermediate results.

In the terminology of~\cite[Section~6.D]{Grom_Met}, the second assertion in Theorem~\ref{main_1} is referred to as an isoperimetric inequality of {\em infinite rank}. Inequalities of this type are also discussed in~\cite{Pap-CC, Wen_AR}. 
A priori, this assertion is weaker than the conjectured linear bound (possibly $C = C(\del) \to \infty$ as $\del \to 0$), and proving the latter, if correct, seems to require a strategy different from ours. 
However, a linear inequality will be obtained for so-called minimal tetrahedra in Theorem~\ref{main_4} below, which constitutes a crucial ingredient of Theorem~\ref{main_1}. 
Currently we do not know how to adapt Theorem~\ref{main_1} to homological fillings of general $2$-cycles, as the decomposition argument used in the proof requires good topological control. 
Yet, we obtain the following corollary via Gromov's systolic inequality for closed surfaces.

\begin{introcor}\label{Cor:filling_surfaces}
Let $X$ be a proper $\CAT(0)$ space of asymptotic rank at most~$2$. 
For every $\delta > 0$ and every integer $g \ge 0$ there exists a constant $C_g = C_g(\delta)$ such that every closed Lipschitz surface $\Sigma\subset X$ of genus~$g$ extends to a Lipschitz handlebody $H \subset X$ with volume 
\[
\vol(H) \le C_g \cdot\area(\Sigma)^{1+\del}.
\]
\end{introcor}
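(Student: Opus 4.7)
My plan is to argue by induction on the genus $g$. The base case $g=0$ is handled by the implication (iii)$\Rightarrow$(ii) of Theorem~\ref{main_1} applied to Lipschitz $2$-spheres, which provides $C_0(\del)$.

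For the inductive step, suppose $g \ge 1$ and the assertion holds for all smaller genera. Let $f\colon \Sigma_g \to X$ be a Lipschitz map representing the surface $\Sigma$, with $A := \ar(f)$. The key geometric input will be Gromov's systolic inequality: after approximating the pulled-back length pseudo-metric $f^* d_X$ on $\Sigma_g$ by a smooth Riemannian metric of area close to $A$, it yields a simple non-contractible loop $c \subset \Sigma_g$ with $\length(f \circ c) \le \kappa_g \sqrt{A}$, for a constant $\kappa_g$ depending only on $g$. The loop $\gamma := f \circ c$ then bounds a Lipschitz disk $\bar D\colon \mathbb{D}^2 \to X$ of area at most $\ic_2 \kappa_g^2 A$ by the sharp quadratic isoperimetric inequality for curves in $\CAT(0)$ spaces.

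Next I would perform surgery on $\Sigma_g$ along $c$. Let $N \subset \Sigma_g$ be a thin annular neighborhood of $c$ with boundary circles $c_\pm$, set $\Sigma^{\mathrm{out}} := \Sigma_g \sm N$, and form the closed Lipschitz surface (or pair of surfaces, if $c$ separates)
\[
\Sigma' := \Sigma^{\mathrm{out}} \cup_{c_+} D_+ \cup_{c_-} D_-,
\]
where $D_\pm$ are two copies of $\bar D$ and the map to $X$ is $f$ on $\Sigma^{\mathrm{out}}$ and $\bar D$ on $D_\pm$. Each component has genus strictly less than $g$, and the total area is at most $\lambda_g A$ for a suitable constant $\lambda_g$. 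The induction hypothesis (applied componentwise) furnishes a Lipschitz handlebody, possibly disconnected, $H' \to X$ bounding $\Sigma'$ with total volume at most $C'_g(\del)\, A^{1+\del}$. In parallel, the union
\[
S_0 := f(N) \cup_{c_+} D_+ \cup_{c_-} D_-
\]
is a Lipschitz $2$-sphere of area at most $\lambda_g A$, so by the base case it bounds a Lipschitz $3$-ball $B_0 \to X$ with $\vol(B_0) \le C_0(\del)(\lambda_g A)^{1+\del}$. I then glue $H'$ and $B_0$ along $D_+ \sqcup D_-$, on which both restrictions coincide with $\bar D$, producing a Lipschitz handlebody $H \to X$ of genus $g$ bounded by $f(\Sigma_g)$; topologically this realizes a $1$-handle attachment to $H'$ in the non-separating case, or a boundary connect sum of the two components of $H'$ via $B_0$ in the separating case. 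The bound $\vol(H) \le C_g(\del) A^{1+\del}$ follows by adding the two contributions.

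I expect the main technical hurdles to be two. The first is justifying Gromov's systolic inequality for a Lipschitz map into a metric space with effective area control; this I would handle by smoothing the pulled-back pseudo-metric via $\eps\cdot\rho_0$ for a background Riemannian metric $\rho_0$ on $\Sigma_g$ and passing to the limit as $\eps \to 0$. The second is verifying that the abstract gluing of $H'$ and $B_0$ along $D_+ \sqcup D_-$ yields a handlebody of the correct genus $g$; this is a standard consequence of $3$-dimensional handle decompositions and relies on the two gluing disks being parametrized identically by $\bar D$.
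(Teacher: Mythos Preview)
Your proposal is correct and follows the same strategy as the paper's proof: induction on the genus, using Gromov's systolic inequality to find a short essential simple closed curve, capping by a Reshetnyak disc, and invoking the induction hypothesis on the surgered surface. The execution differs only in cosmetic ways. First, the paper makes the intrinsic pseudo-metric non-degenerate by passing to the graph map $f_\eps=(f,\eps\iota)\colon\Sigma_g\to X\times\R^3$, which is exactly your $\eps\rho_0$ perturbation phrased extrinsically. Second, the paper arranges the systolic curve to be non-separating, so the cut surface is connected of genus $g-1$; your treatment of both the separating and non-separating cases is equally valid. Third, rather than filling your auxiliary sphere $S_0=f(N)\cup D_+\cup D_-$, the paper simply observes that the genus $g-1$ handlebody $H'$ can be reparametrized by a genus $g$ handlebody via a degenerate $1$-handle $D^2\times[0,1]\to X$, $(z,t)\mapsto\bar D(z)$, of zero volume---this is your construction in the limit where the annulus $N$ collapses to the curve $c$, making the step of filling $S_0$ unnecessary.
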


We would like to point out the formal similarity between our results and another family of filling problems 
in quantitative topology, originally raised by Gromov~\cite{Gro_quant}. 
Specifically, given a null-cobordant closed Riemannian manifold $M$ of bounded geometry 
(say, with sectional curvature $|K| \leq 1$ and injectivity radius at least $1$), 
one wants to minimize the volume of compact Riemannian manifolds $W$ of bounded geometry with $\d W = M$.
An upper bound of order $O(\vol(M)^{1+\delta})$ for every $\delta>0$ was established in~\cite{CDMW_quant}, 
and an analogous result in the PL category has recently been shown in~\cite{MW_pl}.
Notably, as in our case, Gromov originally suggested a linear inequality.

\subsection{Strategy of proof}

We now outline the proof of Theorem~\ref{main_1}. 
It clearly suffices to prove the implications $(1) \Rightarrow (3)$ and $(3) \Rightarrow (2)$.
We show that the first holds in all dimensions, with the sharp constant $\ic_{n+1}$,
for Lipschitz $n$-spheres and asymptotic rank at most $n$ (see Proposition~\ref{Pro:AR}).
The most novel and challenging part is the implication $\rm(3)\Rightarrow\rm(2)$. 
Let us refer to the inequality in~$\rm(2)$ as a {\em $\delta$-isoperimetric inequality}. 
Thus, if the asymptotic rank of a proper $\CAT(0)$ space is at most~$2$,
then we want to show a $\delta$-isoperimetric inequality for all $\delta>0$.

The first ingredient is the case $\delta=1/2$ which corresponds to the Euclidean isoperimetric inequality for $2$-spheres. The result holds for general $\CAT(0)$ spaces, regardless of the asymptotic rank (see Theorem~\ref{Thm:EII}):

\begin{introthm}\label{main_3}
Let $X$ be a $\CAT(0)$ space. Then there exists a constant $C>0$ such that every Lipschitz $2$-sphere $S\subset X$
bounds a Lipschitz $3$-ball $B\subset X$ 
with volume 
\[
\vol(B) \le C \cdot\area(S)^{3/2}.
\]
\end{introthm}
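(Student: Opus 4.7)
The plan is to argue by a recursive decomposition of $f: S^2 \to X$ along short, area-balancing slices, in the spirit of Wenger's slicing proof of the Euclidean isoperimetric inequality for integral currents~\cite{Wen-EII}, with additional topological bookkeeping to ensure that the filling is a genuine Lipschitz map from $D^3$. Set $A := \area(f)$ and $R(f) := \inf_{p \in X} \sup_{x \in S^2} d(p, f(x))$.

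I would first handle the case $R(f) \le c_0 \sqrt{A}$ for a small universal constant $c_0$: picking a near-minimizer $p$ and defining $F: D^3 \to X$ as the CAT(0) geodesic cone from $p$, viewing $D^3$ as $(S^2 \times [0,1])/(S^2 \times \{0\})$ and setting $F(x, t)$ to be the point at parameter $t$ on the geodesic from $p$ to $f(x)$, convexity of distances in CAT(0) gives $\area(F(\cdot, t)) \le t^2 A$, whence $\vol(F) \le \int_0^1 c_0 \sqrt{A} \cdot t^2 A \, dt = (c_0/3) A^{3/2}$. Otherwise, with $R(f) > c_0 \sqrt{A}$, pick $p$ almost realizing $R(f)$ and set $u := d(p, \cdot) \circ f$. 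The coarea inequality applied to the integral $2$-current $T = f_\# \bb{S^2}$,
\[
\int_\R \M(\langle T, u, t\rangle)\, dt \le A,
\]
together with a sliding-median argument, yields a generic level $t$ and a connected component $\gamma$ of $u^{-1}(t)$ bounding a disc $E \subset S^2$ with $\len(f(\gamma)) \le c_1 \sqrt{A}$ and $\area(f|_E), \area(f|_{S^2 \setminus E}) \in [\alpha_0 A, (1-\alpha_0) A]$ for fixed small $\alpha_0, c_1 > 0$.

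I would then cap $f(\gamma)$ by a Lipschitz $2$-disc $g: D^2 \to X$ of area $\le \frac{c_1^2}{4\pi} A$, using the sharp Euclidean isoperimetric inequality for Lipschitz loops in $\CAT(0)$ spaces~\cite{LWcurv, StaWen}. Attaching $g$ to each of $f|_E$ and $f|_{S^2 \setminus E}$ produces Lipschitz $2$-spheres $f^\pm$ of areas $A^\pm$ with $A^+ + A^- = A + 2 \area(g)$ and $\max(A^+, A^-) \le (1 - \alpha_0 + \frac{c_1^2}{4\pi}) A$, so that strict convexity of $x \mapsto x^{3/2}$ yields $(A^+)^{3/2} + (A^-)^{3/2} \le (1 - \eta) A^{3/2}$ for some $\eta > 0$ (given $c_1$, $\alpha_0$ small enough). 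By the inductive hypothesis, each $f^\pm$ is filled by a Lipschitz $3$-ball $F^\pm$ with $\vol(F^\pm) \le C (A^\pm)^{3/2}$; since two $3$-balls glued along a boundary $2$-disc form a $3$-ball, the union $F^+ \cup_{g(D^2)} F^-$ is the required Lipschitz $3$-ball filling of $f$, with $\vol \le C(1-\eta) A^{3/2}$, closing the induction.

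The main obstacle is the slicing lemma: producing a single short, topologically isolated, and area-balancing component $\gamma$ of $u^{-1}(t)$. When $u^{-1}(t)$ has several components, a componentwise selection must identify one whose bounded disc carries a definite fraction of the area; atomic jumps of $t \mapsto \area(f|_{u<t})$ may force changing the basepoint $p$. A further subtlety is anchoring the recursion, since $R(f^\pm)$ need not decrease under the decomposition: one must either argue, with the help of the filling radius bound of~\cite{GolL} (Theorem~\ref{Thm:fillrad}), that finitely many decomposition steps reach the cone case, or close the estimate via a compactness/limiting argument.
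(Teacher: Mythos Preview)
Your architecture --- slice by a distance function, cap the slices by small discs using the quadratic inequality for curves, pass to smaller spheres, iterate --- matches the paper's in outline, but two steps you yourself flag are genuine gaps. The first is the slicing lemma: a \emph{single} component $\gamma \subset u^{-1}(t)$ that is simultaneously short and area-balancing need not exist. Coarea bounds only the total length of a level set, and the area jump across level $t$ may be spread over many components none of which alone carries a definite fraction of $A$. The paper avoids this entirely by following Wenger~\cite{Wen-Gro}: a Vitali-type argument selects finitely many disjoint balls $B(x_i,r_i)$ with $\area(S \cap B_i) \gtrsim r_i^2$, $\length(S \cap \partial B_i) \lesssim \area(S \cap B_i)^{1/2}$, and $\sum_i \area(S \cap B_i) \ge \tfrac{1}{25}\area(S)$; all Jordan curves on the $\partial B_i$ are capped at once, the trapped spheres are coned off inside their balls, and the residual spheres have total area $\le q\,\area(S)$ for a fixed $q < 1$. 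No single balancing cut is ever needed.

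The second gap is the recursion anchor. Invoking Theorem~\ref{Thm:fillrad} is illegitimate here: that result requires asymptotic rank at most~$2$, whereas Theorem~\ref{main_3} is stated for arbitrary $\CAT(0)$ spaces and is used as an \emph{input} to Theorem~\ref{main_1}, not a consequence. The paper's termination is elementary and never refers to $R(f)$: since $S \subset B(o,R)$ for some fixed $R$, after $l$ iterations of the sublemma the residual spheres have total area $\le q^l\area(S)$ and can be filled by coning from $o$ at cost $\le \tfrac{R}{3}\, q^l\area(S)$; choosing $l$ so that this is $\le \area(S)^{3/2}$ closes the geometric series with a constant independent of $l$ and $R$.
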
 

The argument uses Wenger's approach from~\cite{Wen-Gro} for general integral $n$-cycles and homological fillings,
and the restriction to $2$-spheres allows us to control the topology of the filling.
The proof works more generally for spaces that admit certain {\em Lipschitz coning inequalites}\footnote{There is $C>0$ such that for $n=1,2$, every Lipschitz $n$-sphere $S$ in an $r$-ball bounds a Lipschitz $(n+1)$-ball $B$ with $\vol_{n+1}(B)\leq C r\vol_n(S)$.}. The latter hold, for instance, if $X$ is a $2$-connected Riemannian manifold
with a geometric action of a combable group; see~\cite[Theorem~10.3.6]{Epstein_book}.
However, it remains unclear how to generalize Theorem~\ref{main_3} to higher dimensions.  

Returning to the proof of Theorem~\ref{main_1}, observe that the Euclidean isoperimetric inequality provides the desired implication for all $2$-spheres up to a certain area, depending on $\del$. Furthermore, this threshold can be made arbitrarily large by increasing the constant in the $\del$-isoperimetric inequality. The proof is indirect. Assuming that the conclusion fails for some $\del \in (0,1/2)$, we would like to select a Lipschitz $2$-sphere with minimal area among 
all spheres that violate the $\delta$-isoperimetric inequality for a suitable large constant.
Due to the lack of compactness, we need to work with a sphere $f \colon S^2 \to X$ of nearly minimal area instead. 

These assumptions allow us to prove that $f$ satisfies an {\em intrinsic quadratic isoperimetric inequality} for subdiscs of sufficiently large area. 
We then combine this with an elaborate decomposition process to subdivide the sphere, after a small deformation, into a controlled number of relatively large pieces with effective bounds on their circumference and area. 
This is a crucial point in our argument. As Gromov remarks in~\cite[\S 5.D, (5)(a)]{Gro-AI}, the main difficulty in extending the iterative scheme for the filling of circles
(used, for example, in~\cite[\S 5.A$''_3$]{Gro-AI} or~\cite{Bow_sub}) to higher dimensions is the lack of
nice canonical subdivisions of $n$-dimensional manifolds for $n \ge 2$.

The next step in the proof is to deform $f$ into a Lipschitz sphere $\bar f$ that is {\em piecewise minimal}, whereby we use the $\del$-isoperimetric inequality for spheres of small area to bound the volume of the homotopy. 
Intuitively, one may think of $\bar f$ as being piecewise geodesic on the $1$-skeleton of a triangulation of $S^2$ and sending $2$-simplices to minimal discs. 
After coning off the $0$-skeleton and inserting further minimal discs
into the geodesic triangles thus obtained, it remains to fill a controlled number of {\em minimal tetrahedra}.
This is accomplished by the following linear isoperimetric inequality (see Theorem~\ref{Thm:tetra-filling}). Altogether, we obtain a filling contradicting the assumption on $f$,
which finishes the proof of Theorem~\ref{main_1}.

\begin{introthm}\label{main_4}
Let $X$ be a proper $\CAT(0)$ space of asymptotic rank at most~$2$. 
There exists a constant $\mu>0$ such that each  minimal tetrahedron $\tau\subset X$  satisfies a linear isoperimetric inequality for fillings by balls,
\[\Fillvol(\tau)\leq\mu\cdot\area(\tau).\]
\end{introthm}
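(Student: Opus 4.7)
The plan is to combine the filling theorem for integral cycles of bounded density quotient (Theorem~\ref{Thm:fillrad} of Goldhirsch--Lang) with the Euclidean isoperimetric inequality of Theorem~\ref{main_3} and upgrade the resulting integral filling to a Lipschitz $3$-ball. The asymptotic rank hypothesis enters only through the invocation of Theorem~\ref{Thm:fillrad}; without it, Wenger's sub-Euclidean bound~\cite{Wen_AR} would only yield $\Fillvol(\tau) = o(\area(\tau)^{3/2})$, which is insufficient to beat the Euclidean scaling at large area.

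The first step is to verify that $\tau$, regarded as an integral $2$-cycle, has a uniformly bounded density quotient: there exists a universal constant $Q$, depending only on the $\CAT(0)$ structure, such that $\M(\tau \llcorner B(x,r)) \le Q \cdot r^2$ for every $x \in X$ and $r > 0$. The key input is that each face $D_i$ of $\tau$ is a minimal disc, so every subdisc of $D_i$ cut out by $B(x,r)$ satisfies the sharp Euclidean isoperimetric inequality $\area \le \ell^2/(4\pi)$ of Lytchak--Wenger. Applying this to the subdisc bounded by $D_i \cap \partial B(x,r)$ (possibly together with an arc of $\partial D_i$) and invoking the coarea formula produces a differential inequality for $A(r) := \area(D_i \cap B(x,r))$ whose integration gives $A(r) \le Q \cdot r^2$. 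Summing over the four faces delivers the asserted density bound for $\tau$.

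With this density bound in hand, Theorem~\ref{Thm:fillrad} supplies an integral $3$-current $V$ with $\partial V = \tau$, mass $\M(V) \le \mu_1 \cdot \area(\tau)$, and filling radius $\operatorname{fillrad}(\tau) \le \mu_1 \cdot \sqrt{\area(\tau)}$, where $\mu_1$ depends only on $X$ and $Q$. This yields the desired linear bound at the level of integral currents and, crucially, localizes $V$ inside a controlled tubular neighborhood of $\tau$.

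The final and most delicate step is to promote $V$ to a genuine Lipschitz $3$-ball. Using the decomposition machinery developed in the course of proving Theorem~\ref{main_1}, together with the filling radius estimate from the previous step, I would subdivide $\tau$ into Lipschitz $2$-spheres $\Sigma_1, \dots, \Sigma_N$ of uniformly bounded individual area and with $N = O(\area(\tau))$, after a small deformation. Each $\Sigma_j$ is then filled by a Lipschitz $3$-ball $B_j$ via Theorem~\ref{main_3}, so $\vol(B_j) \le C \cdot \area(\Sigma_j)^{3/2} = O(1)$, and assembling the $B_j$ along the cuts produces a Lipschitz $3$-ball extending $\tau$ with volume $O(\area(\tau))$. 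The main obstacle lies precisely in this upgrade: the subdivision must be engineered so that the pieces are bona fide Lipschitz spheres that glue into a genuine ball, and it is at this juncture that the filling radius estimate and the topological simplicity of the tetrahedral structure (only four vertices and six geodesic edges) are decisive.
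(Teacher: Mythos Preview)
Your first two steps are reasonable in outline, though two corrections matter. First, the density bound for minimal triangles is more delicate than a straight coarea-plus-isoperimetric argument, since $D_i \cap B(x,r)$ need not be a disc and its boundary may include pieces of the geodesic triangle $\partial D_i$; the paper instead exploits that a geodesic triangle has total curvature at most $3\pi$ and extends each face to a minimal plane in an enlarged $\CAT(0)$ space, yielding $\area(D_i \cap B(x,r)) \le \frac{3\pi}{2}r^2$ (Lemma~\ref{Lem:triangle-density}). Second, and crucially, the filling radius coming from Theorem~\ref{Thm:fillrad} is a \emph{fixed constant} $\rho = \rho(X,c)$, not $O(\sqrt{\area(\tau)})$. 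Your weaker bound would make the paper's actual argument collapse to a merely Euclidean estimate.

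The real gap is your third step. The decomposition machinery you invoke (Proposition~\ref{Pro:triangulation}) applies only to spheres that are $(M,\delta,\eps)$-minimal and intrinsically $(c,\theta)$-isoperimetric; a generic minimal tetrahedron has no reason to satisfy either, and in any case that proposition is used downstream in Theorem~\ref{Thm:main_tech} precisely to \emph{reduce} arbitrary spheres to minimal tetrahedra, so invoking it here is circular. You also never make essential use of the integral filling $V$. The paper's route is entirely different and rests on an idea of White~\cite{Whi-hom}: from the density bound one deduces that $\tau$ has Nagata dimension at most~$2$ with a universal constant (Lemma~\ref{Lem:triangle-dim}), hence so does its $\rho$-neighborhood $K$. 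Lemma~\ref{Lem:complex} then produces a Lipschitz map $\psi \colon K \to \Sigma$ into a finite $2$-dimensional complex, together with a quasi-inverse $\phi \colon \Sigma \to X$ of displacement $\le L\rho$. Since $\spt(V) \subset K$ and $\dim\Sigma = 2$, the push-forward $\psi_\# V$ vanishes, so $\psi \circ f$ is null-homologous in $\Sigma$; after coning off the $1$-skeleton to a simply connected $\hat\Sigma$, Hurewicz yields a Lipschitz $3$-ball $\Psi$ in $\hat\Sigma$ filling $\psi \circ f$, necessarily of \emph{zero} volume. Then $\hat\phi \circ \Psi$ is a zero-volume ball filling $\phi\circ\psi\circ f$, and the geodesic homotopy from $f$ to $\phi\circ\psi\circ f$ has volume at most $L\rho \cdot \area(f)$. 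This gives $\mu = L\rho$.
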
 

This result represents the key novel idea of this paper.
It crucially exploits properties of minimal discs established in~\cite{Sta}, the aforementioned bound on the filling radius 
from~\cite{GolL}, and an idea of White~\cite{Whi-hom}.
As a by-product of the above strategy, the converse holds 
as well: a linear isoperimetric for minimal tetrahedra implies 
that the aymptotic rank is at most $2$.


\subsection{Organization}
In Section~\ref{Sec:pre} we introduce our notation and recall some basic facts from metric geometry. We then collect relevant background from the theory of minimal surfaces in metric spaces,  the theory of metric integral currents, and quantitative dimension theory. 
Section~\ref{Sec:ball_fill} contains the proof of Theorem~\ref{main_3}.
In Section~\ref{sec:min_triangle}, we introduce \emph{minimal tetrahedra} -- Lipschitz $2$-spheres composed of four minimal discs filling the four geodesic triangles spanned by a quadruple of points -- and give the proof of Theorem~\ref{main_4}.
Section~\ref{Sec:triangulation} discusses some auxiliary quantitative triangulation results for (pseudo-)metric discs.
Section~\ref{Sec:imp} is devoted to the construction of certain homotopies of spheres that improve their intrinsic structure.
In the final Section~\ref{Sec:proof}, we complete the proof of Theorem~\ref{main_1} and derive Corollary~\ref{Cor:filling_surfaces}.

\subsection{Acknowledgements}
The first author is grateful to the Max Planck Institute for Mathematics in Bonn for its hospitality during several research visits, while part of the work at this paper was carried out.
The second author was partially supported by Swiss National Science Foundation Grant~197090.
The last author was partially supported by DFG grant SPP 2026.


\section{Preliminaries}\label{Sec:pre}

\subsection{Basics}

In the Euclidean space $\R^n$, we denote the open unit ball by $U^n$, the closed unit ball by $B^n$, and its boundary, the unit sphere, by $S^{n-1}$. The Lebesgue measure of $B^n$ is denoted by $\om_n$.
A {\em disc} $D$ is a topological space homeomorphic to $B^2$.

We denote the distance in a metric space $X$ by $d$. 
The closed ball in $X$ of radius $r$ and center $x\in X$ is denoted by
\[B(x,r)= \{y\in X: d(x, y)\leq r\}. \]
If closed balls in $X$ are compact, then $X$ is called {\em proper}.
For compact subsets $A,A'\subset X$ we denote by $|A,A'|_H$ their Hausdorff distance.

For $n\geq 0$, the $n$-dimensional Hausdorff measure on $X$ is denoted by $\cH^n$. 
The normalizing constant is chosen so that on the Euclidean space $\R^n$ the Hausdorff measure $\cH^n$ equals the Lebesgue measure.

A continuous curve in $X$ is called {\em rectifiable} if it has finite length. An {\em arc} is a homeomorphism from a compact interval to a subset of $X$. A \emph{geodesic} is an isometric map  from an interval into $X$. We refer to the images of the corresponding maps by the same names. 
A {\em Jordan curve} $\Ga$ in a metric space $X$ is a closed subset homeomorphic to $S^1$.

The space $X$ is called \emph{a geodesic space} if any pair of points in $X$ is connected by a geodesic.
A \emph{$\CAT(0)$ space} $X$ is a complete geodesic space such that distances on  a geodesic triangle in $X$
are bounded from above by the corresponding distances on its comparison triangle in $\R^2$.
We assume some familiarity with $\CAT(0)$ geometry and refer the reader to \cite{BGS, ballmannbook, BriH} for more information.

\subsection{Lipschitz maps, area and volume}

Let $U \sub \R^n$ be an open set, and let $X$ be a metric space.
Every Lipschitz map $f\colon U\to X$ has a {\em metric differential} at almost every point $x\in U$ in the following sense. 
There exists a unique semi-norm on $\R^n$, denoted $|df_x|$, such that 
\begin{equation*}
\lim_{x'\to x}\frac{d(f(x'), f(x)) - |df_x|(x'-x)}{\|x'-x\|} = 0.
\end{equation*}
The map $x\mapsto |df_x|$ into the space of semi-norms has a Borel measurable representative. 
The \emph{Jacobian $\Jac(s)$} of a semi-norm $s$ on $\R^n$ is the Hausdorff $n$-measure in $(\R^n, s)$ of the Euclidean unit cube if $s$ is a norm, and $\Jac(s)=0$ otherwise. 
The {\em volume} of a Lipschitz map $f\colon U\to X$ is given by the {\em area formula}
\[
\vol_{n}(f) := \int_U \Jac(|df_x|)\,dx = \int_X \#(f^{-1}\{y\}) \,d\cH^n(y).
\] 
See~\cite{Kirch, KS, LWPlateau}. If $X$ is a $\CAT(0)$ space, then for almost 
every $x \in U$ where the metric differential is non-degenerate, the norm $|df_x|$
is induced by an inner product on~$\R^n$ (see \cite[Lemma~4.3]{Wen-FI}).
If $n = 1$ or $n = 2$, we will speak of {\em length} or {\em area} instead of
volume and write $\length(f)$ or $\area(f)$, respectively.
The volume of a Lipschitz map from a relatively open subset $U$ of $S^n$ is defined similarly.

A Lipschitz map from an $n$-sphere or $(n+1)$-ball to $X$ will be called a {\em Lipschitz $n$-sphere} or {\em Lipschitz $(n+1)$-ball}, respectively.
For convenience, we will sometimes also refer to their images in $X$ as Lipschitz $n$-spheres or Lipschitz $(n+1)$-balls, often denoted by $S\subset X$ and $B\subset X$, respectively.  
The volumes $\vol_n(S)$ and $\vol_{n+1}(B)$ are then understood to be the volumes of 
the corresponding maps or, equivalently, as the Hausdorff measures counted with multiplicities. If the dimension is clear from the context, we omit the subscript and simply write $\vol(S)$ and $\vol(B)$. 

We say that a Lipschitz ball $\bar f\colon B^{n+1}\to X$ {\em fills} the Lipschitz sphere $f:=\bar f|_{S^n}$, or that $S$ {\em bounds}  $B$ where $S=f(S^n)$ and $B=\bar f(B^{n+1})$.
For a given Lipschitz sphere $f\colon S^n\to X$, we define its {\em filling volume} by
\[
\Fillvol(f):=\inf \{\vol(\bar f)\mid \bar f\colon B^{n+1}\to X \mbox{ a Lipschitz ball filling }f \}.
\]
If no such $\bar f$ exists, then $\Fillvol(f) = \infty$.
\medskip

In order to build Lipschitz balls of small volume that fill a given Lipschitz sphere,
it is useful to decompose the sphere. To do this, we need some control on the fibers of Lipschitz functions.
The following result, which will be used repeatedly, is a slight variation of
\cite[Theorem~2.5]{AlBiCr_Sard}.

\bprop\label{Pro:reg_fiber}
Let $f\colon B^2\to \R$ be a non-constant Lipschitz function. We denote its
fibers $f^{-1}(t)$ by $\Pi_t$. Then for almost every value $t\in\R$ the following holds.
\begin{enumerate}
	\item $\Hm^1(\Pi_t)$ is finite;
 \item The differential of $f$ is non-trivial at $\Hm^1$-almost every point in $\Pi_t$;
	\item $\Pi_t$ decomposes as a disjoint union of rectifiable Jordan curves,  rectifiable
 arcs with boundary points on $S^1$, and an $\Hm^1$-negligible
	set.
\end{enumerate}
\eprop

\begin{proof}
Extend $f$ to a Lipschitz function $F \colon \R^2 \to \R$ with compact support.
Now~\cite[Theorem~2.5]{AlBiCr_Sard} shows that, for almost every $t$,
the fiber $F^{-1}(t)$ has finite $\Hm^1$ measure, every connected component $\Ga$
of $F^{-1}(t)$ with $\Hm^1(\Gam) > 0$ is a Jordan curve,
and the union of the remaining connected components has $\Hm^1$ measure zero.
Furthermore, every such Jordan curve $\Ga$ admits a unit speed parametrization 
$s \mapsto \gam(s)$ such that $(\nabla F(\gam(s)),\dot\gam(s))$ 
is a positively oriented basis of $\R^2$ for almost every $s$.
By the coarea formula, $t$ can be chosen in addition such that $F^{-1}(t) \cap S^1$ is a finite set.
Then every Jordan curve $\Gam \sub F^{-1}(t)$ is either contained in $B^2$ or intersects $B^2$ 
in finitely many arcs or points.
\end{proof}

The above statement holds true in similar form for maps defined on $S^2$. In this case,
the fibers generically decompose into a disjoint union of Jordan curves and an 
$\Hm^1$-negligible set, no arcs appear.

\subsection{Plateau problem and minimal discs} \label{Sec:minimal}
Recall that $U^2$ denotes the open unit disc in $\R^2$.
Let $X$ be a $\CAT(0)$ space and $\Ga\subset X$ a closed rectifiable curve.
The Plateau problem for $\Ga$ asks to find a disc of minimal area spanning $\Ga$.
For Jordan curves it was solved in every complete metric space by Lytchak-Wenger~\cite{LWPlateau} and the result 
was extended to general curves by Creutz~\cite{Cr_sing}.
For a Lipschitz parametrization $\ga \colon S^1 \to \Gam$, 
consider the family of competitors
\[
\Lambda(\ga,X):=\{u\in W^{1,2}(U^2,X)|\ u \text{ spans } \ga\},
\]
where $W^{1,2}(U^2,X)$ denotes the Sobolev space of finite energy maps, and an element $u$ is said to {\emph{span}} $\ga$ if its trace $\tr(u)\in L^2(S^1,\Ga)$ has a continuous representative homotopic to $\ga$. We will not recall the relevant definitions from Sobolev theory with values in metric spaces
here, but refer the reader to \cite{LWPlateau} for details. For our purposes, it is enough to know that every finite energy map has a well-defined area, and every Lipschitz map $v\colon B^2\to X$
restricts to an element in $W^{1,2}(U^2,X)$; its trace is simply $v|_{S^1}$ and its area is $\area(v)$ as defined above. 
Plateau's problem then asks to find an area minimizer $v\in\Lambda(\ga,X)$, meaning
\[\area(v)=\inf\{\area(u)|\ u\in\Lambda(\ga,X)\}.\]
The following solution is well-known to experts and can be assembled from existing results in the literature.
The key point is that it yields a Lipschitz map on the closed disc,
whereas the (quasi-conformal) solution to the Plateau problem from~\cite{LWPlateau, Cr_sing} is only locally Lipschitz in the interior.

Recall that a {\em disc retract} is a Peano continuum homeomorphic to a non-separating subset of the plane.
A {\em $\CAT(0)$ disc retract} is a $\CAT(0)$ space homeomorphic to a disc retract.

\begin{Thm}\label{Thm:Plateau}
Let $X$ be a proper $\CAT(0)$ space and $\Ga\subset X$ a closed rectifiable curve.
Then there exists a Lipschitz disc $v\colon B^2\to X$ that solves the Plateau problem
for $\Ga$. Moreover, $v$ satisfies the following properties.
\begin{itemize}
\item (monotonicity of density ratios)
\[
\pi \le \frac{\area(v(B^2) \cap B(x,r))}{r^2} \le \frac{\area(v(B^2) \cap B(x,s))}{s^2}
\]
for $x \in v(B^2)$ and $0 < r \le s < d(x,v(S^1))$. 

\item $v$ admits a factorization as $v=v'\circ v''$ with $v''\colon B^2\to Z$ and $v'\colon Z\to X$, where the space $Z$ is a $\CAT(0)$ disc retract and $v''$ is a surjective Lipschitz map with connected fibers. Furthermore, $v'$ is area minimizing among Lipschitz maps from $Z$ and preserves the length of every rectifiable curve.
\end{itemize}
\end{Thm}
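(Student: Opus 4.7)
The plan is to assemble the theorem from three established ingredients, keeping the number of new arguments small and delegating the technical core to the literature.

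For the existence part I would start with the quasi-conformal area and energy minimizer $u \in \Lam(\ga, X)$ produced by Lytchak--Wenger~\cite{LWPlateau} for Jordan curves, and extended by Creutz~\cite{Cr_sing} to general closed rectifiable curves. This $u$ is locally Lipschitz on $U^2$ and continuous on $B^2$ with trace homotopic to $\ga$, but only locally Lipschitz in the interior. To upgrade it to a map that is Lipschitz on the \emph{closed} disc I would invoke boundary regularity: for a Lipschitz parametrization $\ga$ into a $\CAT(0)$ space, a quasi-conformal energy minimizer is Lipschitz up to $S^1$. Combined with the interior estimate this yields the desired Lipschitz area minimizer $v \colon B^2 \to X$.

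For the factorization, define on $B^2$ the pseudometric
\[
d_v(x,y) := \inf\{\len(v \circ \al) : \al \text{ a Lipschitz curve in } B^2 \text{ from } x \text{ to } y\},
\]
let $Z$ be the quotient metric space obtained by identifying points at $d_v$-distance zero, $v'' \colon B^2 \to Z$ the projection, and $v' \colon Z \to X$ the induced map; by construction $v = v' \circ v''$, the fibers of $v''$ are connected, $v''$ is Lipschitz (because $v$ is), and $v'$ preserves the length of every rectifiable curve, hence is $1$-Lipschitz since $Z$ is a length space. That $Z$ is a $\CAT(0)$ disc retract is the intrinsic structure theorem for minimal discs as developed in~\cite{LWPlateau} and used in the form required here in~\cite{Sta}. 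Area minimality of $v'$ among Lipschitz maps from $Z$ is inherited from $v$: any competitor $w \colon Z \to X$ with the same boundary values gives $w \circ v'' \in \Lam(\ga, X)$ with $\area(w \circ v'') = \area(w)$, so $\area(v') = \area(v) \le \area(w)$.

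For the monotonicity of density ratios, both the lower bound $\pi \le \area(v(B^2) \cap B(x,r))/r^2$ and the monotonicity in $r$ are classical consequences of area minimality combined with $\CAT(0)$ coning, and are proved in the required form in~\cite{Sta}. The argument compares $v$ inside $B(x,r)$ to the geodesic cone from $x$ over the trace of $v$ on $\partial B(x,r)$; coning decreases area in $\CAT(0)$, so minimality forces a differential inequality for $r \mapsto r^{-2}\area(v(B^2) \cap B(x,r))$ via the coarea inequality applied to $d(x,\cdot) \circ v$, and the Euclidean value $\pi$ is recovered in the limit $r \to 0$. The main obstacle I foresee is step~1: while the existence of a quasi-conformal minimizer and its interior Lipschitz regularity are now standard, upgrading to a Lipschitz map on the closed disc for Lipschitz boundary data requires a boundary regularity statement which is comparatively delicate in the purely metric $\CAT(0)$ setting; I would handle it by citation rather than reproof.
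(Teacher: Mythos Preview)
Your overall architecture matches the paper's, but step~1 is precisely where the paper diverges from what you propose, and the difference matters. You hope to cite a boundary regularity theorem asserting that the quasi-conformal energy minimizer $u$ is Lipschitz on the \emph{closed} disc when $\ga$ is Lipschitz and $X$ is $\CAT(0)$. The paper explicitly avoids this: it remarks just before the proof that the quasi-conformal solution from~\cite{LWPlateau,Cr_sing} is only locally Lipschitz in the interior, and treats the production of a map Lipschitz on all of $B^2$ as the novel content. So the citation you plan to invoke does not appear to be available, and your own assessment that this is ``the main obstacle'' is correct.

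The paper's workaround is elegant and worth knowing. Rather than improving $u$ itself, it first passes to the intrinsic space $Z_u$ of the (merely H\"older-on-$B^2$) minimizer $u$; this $Z_u$ is a $\CAT(0)$ disc retract by~\cite{LyWa_min,PS}. It then takes a Lipschitz parametrization $c \colon S^1 \to \partial Z_u$ of the boundary and invokes~\cite[Lemma~30]{Sta} to produce a \emph{new} surjective Lipschitz map $\varphi \colon B^2 \to Z_u$ with connected fibers extending $c$. The desired disc is $v := \bar u \circ \varphi$, where $\bar u \colon Z_u \to X$ is the canonical $1$-Lipschitz factor. Thus $v$ is not a regularity upgrade of $u$ but a reparametrization through the intrinsic space; in your notation $v'' = \varphi$ and $v' = \bar u$, with $Z = Z_u$. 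The monotonicity formula for $v$ is then inherited from that for $u$ via a chain of area identities, since both maps factor through the same $Z_u$ and $\bar u$.

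Your steps~2 and~3 are essentially fine as post-hoc justifications once $v$ is in hand, but note that in the paper's construction the factorization comes \emph{first} and is used to define $v$, not derived from it afterwards.
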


Recall that the intrinsic metric space $Z_u$ associated to a map $u\colon B^2\to X$
is the quotient metric space obtained from the pseudo-metric $d_u$  on $B^2$ defined by
\[d_u(z_1,z_2)=\inf\{\length(u\circ\gamma)\mid \gamma \mbox{ a path in }B^2\mbox{ joining }z_1\mbox{ and }z_2\}.\]

\proof
By \cite[Theorem~4.3]{Cr_sing} there exists a solution to the Plateau problem
$u\in\Lambda(\ga,X)$ that is locally $\frac{1}{3}$-H\"older continuous and extends to a 
$\frac{1}{27}$-H\"older continuous map on $B^2$ which we will still denote by $u$.
Since $u$ is harmonic, it is locally Lipschitz in $U^2$ by \cite{KS}. Moreover, the associated intrinsic metric space $Z_u$ is $\CAT(0)$,
the natural projection $\pi_u\colon B^2\to Z_u$ has connected fibers, and $Z_u$ is a disc retract 
 \cite{LyWa_min,PS}. By \cite[Corollary~3.2]{LWint}, we have $\pi_u\in W^{1,2}(U^2,Z_u)$
 and the approximate metric differentials of $u$ and $\pi_u$ coincide almost everywhere.
 In particular, $\area_u(W)=\area_{\pi_u}(W)$ for every Borel set $W\subset U^2$.
Let $c\colon S^1\to\d Z_u$ be a Lipschitz parametrization representing $\tr(\pi_u)$.
The proof of \cite[Lemma~30]{Sta} provides a surjective Lipschitz
map $\ph\colon B^2 \to Z_u$ with connected fibers that extends $c$.
Then
\[
\area(\ph|_W)=\Hm^2(\ph(W))
\] 
for every Borel set $W\subset B^2$ by the aforementioned area formula. 
Set $v=\bar u\circ \ph$, where $\bar u \colon Z_u \to X$ is the unique $1$-Lipschitz map such that $u = \bar u \circ \pi_u$. Then, for any ball $B(x,r)\subset X$, 
\begin{align*}
&\area(u(B^2)\cap B(x,r))=\area(\bar u(Z_u)\cap B(x,r))=\Hm^2(\bar u^{-1}(B(x,r)))\\
&\quad=\area_\ph(v^{-1}(B(x,r)))=\area_v(v^{-1}(B(x,r)))=\area(v(B^2)\cap B(x,r)).
\end{align*}
In particular, $v$ satisfies the required monotonicity formula since $u$ does \cite[Corollary~63]{Sta}.
\qed

\medskip

We call a Lipschitz disc as above a {\em minimal disc}.

\subsection{Disc decompositions}\label{Sec:gen_discs}

Let $\Sigma\subset\R^2$ be a compact connected planar surface with $k+1\geq 1$ boundary components. Note that $\R^2\setminus \Sigma$ has precisely one unbounded component and $\Sigma$ has one {\em outer boundary circle} $\sig_0$ and $k$
{\em inner boundary circles} $\sig_1,\ldots,\sig_k$. If $k$ is positive, then we call such a surface a {\em disc with $k$ holes}. We call it a {\em polygonal disc with $k$ holes} if in addition every boundary circle is a piecewise geodesic Jordan curve. 

The following provides a weak surrogate for a triangulation. 
Given a disc with holes $\Sig$, we define a {\em disc decomposition} of $\Sig$ by means of a connected finite Lipschitz graph $\Ga\subset \Sig$ with $\d \Sig\subset \Ga$ such that each vertex has valence at least three, adjacent vertices are connected by a unique edge, and the closure of each component of $\Sig\setminus \Ga$ is a disc.
In particular, we have a covering by discs $D_i$:
\[
  \Sig = D_1 \cup \ldots \cup D_n.
\]
We call a disc decomposition {\em polygonal} if the edges of $\Ga$ are piecewise geodesic.
Similarly, we define (polygonal) disc decompositions of $2$-spheres.

Recall that a map between topological spaces is called {\em monotone}
if it has connected fibers.

\begin{Lem}\label{Lem:mon_ext}
Let $D \sub \R^2$ be a closed disc with rectifiable boundary. Then every Lipschitz parametrization $\gam \colon
S^1 \to \d D$ extends to a surjective monotone Lipschitz map 
$\ph \colon B^2 \to D$.
\end{Lem}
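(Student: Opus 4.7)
The plan is to reduce the lemma to a cone construction in the $\CAT(0)$ space obtained by equipping $D$ with its intrinsic length metric, and then compose with the $1$-Lipschitz identity back to the Euclidean metric.

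Set $d_D(x,y) := \inf\{\length(\alpha) : \alpha \text{ a rectifiable path in } D \text{ from } x \text{ to } y\}$, with lengths measured in $\R^2$. Since $\partial D$ is rectifiable and $D$ is a Jordan domain, $d_D$ is a finite length metric and $(D,d_D)$ is a compact geodesic space; from $d_D(x,y) \ge |x - y|$ the identity $\iota \colon (D,d_D) \to D$ is $1$-Lipschitz (with Euclidean metric on the target). Being a simply connected planar length space, $(D,d_D)$ is a $\CAT(0)$ disc retract -- locally Euclidean in the interior and globally $\CAT(0)$ by a Cartan--Hadamard type principle. Moreover $\gamma$ remains Lipschitz into $(D,d_D)$:
\[
d_D(\gamma(s),\gamma(t)) \le \length(\gamma|_{[s,t]}) \le \Lip(\gamma)\cdot|s-t|.
\]

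Fix any interior point $p \in D$. Since $(D,d_D)$ is uniquely geodesic, for each $\theta$ let $c_\theta \colon [0,1] \to D$ be the constant-speed $d_D$-geodesic from $p$ to $\gamma(e^{i\theta})$, and set
\[
\varphi(re^{i\theta}) := c_\theta(r), \qquad \varphi(0) := p.
\]
Convexity of distance in $\CAT(0)$ bounds $d_D(\varphi(z_1),\varphi(z_2))$ by a multiple of $|z_1 - z_2|$, the multiple depending on $\Lip(\gamma)$ and $\diam_{d_D}(D)$, so $\varphi$ is Lipschitz. For surjectivity, given $q \in D \setminus \{p\}$ I extend $[p,q]_D$ past $q$ to its first intersection with $\partial D$ at a point $\gamma(e^{i\theta_0})$; then $q = \varphi(re^{i\theta_0})$. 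For monotonicity, uniqueness of geodesics yields $\varphi^{-1}(q) = \{r\}\times \gamma^{-1}(\gamma(e^{i\theta_0}))$ for $q \ne p$, which is connected provided $\gamma$ is a monotone parametrization of $\partial D$ (the natural reading of the hypothesis); moreover $\varphi^{-1}(p) = \{0\}$. The composition $\iota\circ\varphi$ yields the required map.

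The main obstacle is establishing that $(D,d_D)$ is genuinely a $\CAT(0)$ disc retract for an arbitrary Jordan domain with rectifiable boundary -- this is classical in the interior but subtle near boundary singularities where intrinsic and extrinsic topologies may diverge. A self-contained alternative avoids the intrinsic metric altogether: partition $S^1$ into $N$ equal arcs with vertices $z_k$ and $x_k := \gamma(z_k)$; choose pairwise disjoint polygonal arcs $\beta_k \subset D$ joining an interior basepoint $p$ to $x_k$ (via a homeomorphism $B^2 \cong D$ and polygonal approximation of the radii); and define $\varphi$ piecewise affinely, matching the $N$ pie-slices of $B^2$ to the petals of $D \setminus \bigcup_k \beta_k$. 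Either route produces the desired Lipschitz monotone surjection.
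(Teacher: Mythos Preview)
Your approach shares the paper's key idea: equip $D$ with its intrinsic length metric to obtain a $\CAT(0)$ disc, construct the extension there, and compose with the $1$-Lipschitz identity back to the Euclidean metric. The paper, however, is more careful about the first step: it takes the completion $\bar Y$ of the \emph{interior} with its path metric, cites \cite{LWcurv} for the fact that $\bar Y$ is a compact $\CAT(0)$ disc, and then proves that the natural map $\bar Y\to D$ is a homeomorphism. You assert directly that $(D,d_D)$ is a $\CAT(0)$ disc retract while acknowledging this is ``subtle'' --- which it is, and you do not supply the argument. For the extension itself the paper simply invokes \cite[Lemma~30]{Sta} as a black box, whereas you attempt an explicit geodesic coning from an interior point $p$.

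The coning construction has a genuine gap in the monotonicity step. Your claimed description $\varphi^{-1}(q)=\{r\}\times\gamma^{-1}(\gamma(e^{i\theta_0}))$ is wrong whenever the unique extension of $[p,q]_D$ first meets $\partial D$ at a concave boundary point $b_0$ (for instance a reflex vertex of a non-convex polygon). In that case the geodesic can be continued past $b_0$, and every boundary point $b$ reached by such a continuation also satisfies $q\in[p,b]_D$, now with $r=d_D(p,q)/d_D(p,b)$ depending on $b$. Thus $\varphi^{-1}(q)$ is typically a non-degenerate arc in $B^2$, not a single radial slice. It may well still be connected --- since two geodesics issuing from $p$ in a $\CAT(0)$ space can meet only along a common initial segment, one can argue that the set of $b\in\partial D$ with $q\in[p,b]$ is a subarc --- but this requires an argument you do not give. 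Your polygonal alternative does not repair the situation: sending a pie-slice of $B^2$ onto a ``petal'' of $D$ in a monotone Lipschitz way is essentially the original problem again.
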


\begin{proof}
Denote by $Y$ the interior of $D$ equipped with the induced path metric.
Since $\d D$ is rectifiable, any pair of points in $Y$ can be joined by a polygon of length at most half the length of $\d D$. Thus,  the completion $\bar Y$ is compact and therefore a
$\CAT(0)$ disc \cite[Proposition~12.1, Lemma~12.2]{LWcurv}.
Denote by $f\colon\bar Y\to D$ the natural surjective 1-Lipschitz map.
Since  $\bar Y\setminus\d \bar Y$ is a length space \cite[Theorem~1.3]{LWcurv}, and $f$ is a local isometry in the interior, $f$  preserves the length of every rectifiable curve in $\bar Y$. 
Since $f$ restricts to a homeomorphism between the interiors, $f$ must be a homeomorphism.
Thus, $f^{-1}\circ\ga$ is a Lipschitz parametrization of $\d\bar Y$. By \cite[Lemma~30]{Sta}, 
$f^{-1}\circ\ga$ extends to a monotone surjective Lipschitz map
$\psi\colon B^2\to\bar Y$.
Thus, the composition $\ph=f\circ\psi$
has the required properties.
\end{proof}

The following proposition will often be used to reparametrize Lipschitz spheres.

\begin{Pro} \label{Pro:pre-homotopy}
Let $\Ga\subset S^2$ be a finite Lipschitz graph.
Then, for every $\eps>0$, there exists a graph $P\subset S^2$
and a monotone Lipschitz map $F\colon S^2\to S^2$
with the following properties.
\begin{enumerate}
\item The edges of $P$ are piecewise geodesics;
\item $P$ is $\eps$-Hausdorff close to $\Ga$;
    \item $F$ restricts to a homeomorphism $P\to \Ga$;
    \item $F$ is uniformly $\eps$-close to the identity.
\end{enumerate}
\end{Pro}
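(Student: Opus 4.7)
The plan is to build $P$ by discretizing each edge of $\Ga$ at a fine mesh and replacing subarcs by shortest $S^2$-geodesics, and then to define $F$ as a Lipschitz homeomorphism of $S^2$, supported in a small neighborhood of $\Ga\cup P$, that sends $P$ to $\Ga$ by the obvious reparametrization.

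For the construction of $P$, fix a common Lipschitz constant $L\geq 1$ for unit-speed parametrizations of all edges of $\Ga$ and choose a mesh $\delta>0$ much smaller than both $\eps/L$ and the minimal distance between non-adjacent portions of $\Ga$. On each edge $e$, sample consecutive points $p_0^e,\dots,p_{N_e}^e$ at spacing $\delta$, with the two endpoints of $e$ among the sampled points, and replace the subarc between $p_j^e$ and $p_{j+1}^e$ by the unique geodesic segment $[p_j^e,p_{j+1}^e]\subset S^2$. Let $P$ be the union. Each replacement segment has length at most $L\delta$ and, together with the corresponding subarc of $e$, is contained in $B(p_j^e,L\delta)$, so $|P,\Ga|_H\leq 2L\delta<\eps$, giving property~(2). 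The smallness of $\delta$ relative to the geometry of $\Ga$ ensures that distinct edges of $P$ meet only at shared vertices, so $P$ is a graph combinatorially equivalent to $\Ga$ with piecewise-geodesic edges, giving property~(1).

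For $F$, first define $F|_P$ to be the constant-speed reparametrization sending each geodesic segment $[p_j^e,p_{j+1}^e]$ of $P$ onto the corresponding subarc of $e$; this is a Lipschitz homeomorphism $P\to \Ga$ with $d(F(x),x)\leq 2L\delta<\eps$ for every $x\in P$, giving property~(3) and part of~(4). Extend face by face: for each face $D$ of $\Ga$, the corresponding face $D'$ of $P$ is a closed topological disc sharing the same vertices and with $\d D'$ Hausdorff $\eps$-close to $\d D$. Set $F=\id$ on the compact subset $K_D\subset D\cap D'$ of points at distance $\geq 2\eps$ from $\Ga\cup P$, and on the collar $\bar D'\setminus K_D$ take $F$ to be a Lipschitz homeomorphism onto $\bar D\setminus K_D$ that matches $F|_P$ on $\d D'$ and the identity on $\d K_D$. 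Away from vertices this is carried out in Fermi coordinates along each edge, where the strip between $e'$ and $e$ is a thin biangle and linear interpolation between the two parametrized boundaries supplies the required homeomorphism. Gluing the face-wise maps yields a homeomorphism $F\colon S^2\to S^2$ with displacement bounded by $O(\eps)$; after a minor rescaling of $\delta$ this gives property~(4), and as a homeomorphism $F$ is in particular monotone.

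The principal obstacle is the collar interpolation in a small disc around each vertex $v$ of $\Ga$, where several edges of $\Ga$ and of $P$ meet with possibly different tangent directions. Since $\Ga$ and $P$ share the vertex $v$ and the incident edges enter it in the same cyclic order, the resolution is to build, in geodesic normal coordinates at $v$, a local Lipschitz homeomorphism of a small disc around $v$ that rotates the incident $P$-edges onto the incident $\Ga$-edges and matches the adjacent edge-strip maps on the boundary of the disc. This reduces to an elementary Euclidean model problem, but it must be coordinated carefully with the strip maps to preserve Lipschitz continuity across the patch boundaries.
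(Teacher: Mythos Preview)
Your route is genuinely different from the paper's. You try to build $F$ as a global Lipschitz \emph{homeomorphism} of $S^2$, equal to the identity away from a thin collar of $\Ga\cup P$ and given by strip/vertex interpolation inside the collar. The paper instead first \emph{refines} $\Ga$ by inserting auxiliary Lipschitz arcs until $\Ga$ is connected and every complementary region is already a closed disc of diameter at most $\eps/3$; it then builds $P$ edge-by-edge as you do, so that each face of $P$ is a piecewise-geodesic Jordan domain (hence a bilipschitz disc by Tukia), and defines $F$ face-by-face via Lemma~\ref{Lem:mon_ext}. The resulting $F$ is only monotone Lipschitz, not a homeomorphism, but that is all the proposition requires, and the displacement bound now comes for free: each face of $P$ is sent into the corresponding face of $\Ga$, and both have diameter $<\eps$. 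This completely avoids the collar/vertex patching you flag as the ``principal obstacle''.

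Your outline can likely be pushed through, but as it stands it is incomplete at precisely the point you acknowledge. You assert that the vertex problem ``reduces to an elementary Euclidean model problem'', yet the edges of $\Ga$ need not have tangent directions at $v$, so there is no canonical rotation to perform; one must produce a concrete Lipschitz homeomorphism of a small disc sending the $P$-star to the $\Ga$-star and verify it matches the strip maps Lipschitz-continuously on the patch boundaries. You also assume the faces of $\Ga$ are discs, which is not automatic for a general finite Lipschitz graph in $S^2$ (it fails already for two disjoint circles); the paper's preliminary refinement is exactly what secures this. Since only a monotone Lipschitz $F$ is needed, the paper's subdivide-then-extend strategy is both shorter and avoids the analysis you left unfinished.
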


\begin{proof}
After possibly inserting finitely many Lipschitz arcs, we may assume that $\Ga$ is connected and  the closure of
every component of $S^2\setminus \Ga$ is a closed disc of diameter at most $\frac{\eps}{3}$.
To define $P$, start with $\Ga$ and replace every edge
$e$ by a $\delta$-close piecewise geodesic $e'$, for some $\delta\ll\eps$, such that the new edges only intersect in the original vertices. Then the boundary of every component $V$ of $S^2\setminus P$
is a Jordan curve consisting of finitely many geodesic segments. 
By \cite{Tuk_schoen}, the closure $\bar V$ is a bilipschitz disc. Its diameter is at most $\frac{\eps}{3}+2\delta$.

To define $F$, we first send every vertex of $\Ga$ to itself. On the edges of $P$, we choose for every edge $e'$ a bilipschitz map to the closed unit interval and compose it with a constant speed parametrization of the corresponding edge $e$ of $\Ga$. Now we use Lemma~\ref{Lem:mon_ext} to find monotone Lipschitz extensions of $F$ to the components of $S^2\setminus P$. 
By construction, $F$ is monotone Lipschitz and  sends $P$ 
homeomorphically to $\Ga$. Moreover, it maps every component
of $S^2\setminus P$ to the corresponding component of $S^2\setminus \Ga$.
Therefore $d(F(x),x)<\eps$ for all points $x\in S^2$. 
\end{proof}

For $\eps\in(0,1)$ define $\rho_\eps\colon B^2\to B^2$ by 
\[ \rho_\eps(x) =
  \begin{cases}
    \frac{1}{1-\eps}\cdot x      & \quad \text{if } \|x\|\leq 1-\eps, \\
    \frac{x}{\|x\|}  & \quad \text{else}.
  \end{cases}
\]
Let $D$ be a bilipschitz disc. Then we call a map $\psi_\eps\colon D\to D$
an {\em $\eps$-squeeze} if $\psi_\eps=\beta\circ\rho_\eps\circ\beta^{-1}$
for some bilipschitz map $\beta\colon B^2\to D$.
Note that in this case, if  $f\colon D\to X$
a Lipschitz map to a metric space $X$, then the pseudo-metric spaces $(D,d_f)$ and $(D,d_{f\circ\psi_\eps})$ are naturally isometric.
Moreover, if
$\ga$ is a path in $D$ with $\d\ga\subset\d D$, 
then there exists a path $\tilde\ga$
in $D$ with $\d\tilde\ga\subset\d D$ 
and such that $\psi_\eps$ maps $\tilde\ga$ monotonically onto $\ga$.

\begin{Lem}\label{Lem:2nd_path}
Let $\Sigma\subset S^2$ be a disc with $k\geq 1$ holes and rectifiable boundary. Let $f\colon\Sigma\to X$ be a Lipschitz map. Let $s>0$ be
such that $d_f(x,\sig_0)< s$ for every $x\in \sig_1\cup\ldots\cup\sig_k$. 
Then, for every $\eps>0$ there exist a disc with $k$ holes $\Sig'\subset S^2$ and
piecewise geodesic boundary components $\sig_0',\ldots,\sig_k'$,  a surjective monotone Lipschitz map $\psi\colon S^2\to S^2$ that sends $\Sig'$ onto $\Sig$, and $k$ pairs of piecewise geodesic paths 
$\beta_j^\pm\subset\Sigma'$ with the following properties.

\begin{enumerate}
    \item $|\Sig',\Sig|_H<\eps$ and $\psi$ is $\eps$-close to the identity;
    \item $\length_{f\circ \psi}(\sig'_i)=\length_f(\sig_i)$, $i=0,\ldots,k$;
    \item $d_{f\circ\psi}(x',\sig'_0)< s$ for every $x'\in \sig'_1\cup\ldots\cup\sig'_k$;
    \item $\length_{f\circ\psi}(\beta_j^\pm)<s$, $j=1,\ldots,k$;
    \item $|\beta_j^-,\beta_j^+|_H<\eps$ and $\beta_j^-\cap\beta_j^+=\emptyset$, $j=1,\ldots,k$;
    \item $\beta_j^\pm$ joins $\sig'_j$ to $\sig'_0\cup\Lambda_{j-1}$ and $\beta_j^\pm\cap(\sig'_0\cup\Lambda_{j-1}\cup\sig'_j)=\d\beta_j^\pm$
    where $\Lambda_l=\bigcup_{i=1}^{l}(\beta_i^-\cup\beta_i^+)$ for $1\leq l\leq k$.
\end{enumerate}
In particular, the $1$-rectifiable set $\Ga'=\d\Sig'\cup \Lambda_k$ carries a natural graph structure and induces a polygonal disc decomposition of $\Sig'$.
\end{Lem}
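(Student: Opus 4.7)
The plan is to cut $\Sig$ topologically by $f$-short arcs $\al_j$ joining each inner hole to the outer boundary or to a previously constructed arc, approximate the resulting Lipschitz graph by piecewise geodesics via Proposition~\ref{Pro:pre-homotopy}, and finally replace each arc by a pair of disjoint piecewise geodesics $\beta_j^\pm$ bounding a very thin quadrilateral $R_j$ that the modified map $\psi$ collapses back onto $\al_j$.

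By compactness of the inner boundaries together with the strict bound $d_f(\cdot,\sig_0)<s$, fix $\eta>0$ with $d_f(x,\sig_0)<s-3\eta$ for every $x\in\sig_1\cup\ldots\cup\sig_k$. Inductively for $j=1,\ldots,k$ select an injective Lipschitz arc $\al_j\subset\Sig$ from $\sig_j$ to $\sig_0\cup\al_1\cup\ldots\cup\al_{j-1}$, with interior disjoint from $\d\Sig\cup\al_1\cup\ldots\cup\al_{j-1}$ and $\length_f(\al_j)<s-2\eta$: take a near-optimal path for the pseudo-metric $d_f$ and truncate at first return, then apply a small transverse perturbation (of negligible $f$-length cost) to secure simplicity and disjointness. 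Form the finite Lipschitz graph $\Gam_0=\d\Sig\cup\al_1\cup\ldots\cup\al_k\subset S^2$ and apply Proposition~\ref{Pro:pre-homotopy} with a small parameter $\eps_0\ll\eps$, obtaining a piecewise geodesic graph $P_0=\sig'_0\cup\ldots\cup\sig'_k\cup\al'_1\cup\ldots\cup\al'_k$ and a monotone Lipschitz map $F\colon S^2\to S^2$ that is $\eps_0$-close to the identity and restricts to a homeomorphism $P_0\to\Gam_0$. Set $\Sig'=F^{-1}(\Sig)$; then $\length_{f\circ F}(\sig'_i)=\length_f(\sig_i)$ and $\length_{f\circ F}(\al'_j)=\length_f(\al_j)<s-2\eta$.

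For the key doubling step, proceed inductively on $j$. Pick a very thin bilipschitz rectangular neighborhood $R_j\subset\Sig'$ of $\al'_j$, disjoint from $\Lam_{j-1}$ and from the other $\al'_i$, with boundary $\beta_j^-\cup T_j\cup\beta_j^+\cup T_j^*$, where $\beta_j^\pm$ are piecewise geodesic arcs parallel to $\al'_j$ on its two sides, $T_j\subset\sig'_j$ is a short arc around $\al'_j(0)$, and $T_j^*\subset\sig'_0\cup\Lam_{j-1}$ is a short arc around $\al'_j(1)$. Modify $F$ to $\psi$ as follows: parametrize $R_j$ bilipschitzly as $[-1,1]\times[0,1]$ with $\al'_j$ corresponding to $\{0\}\times[0,1]$, and declare
\[
\psi(x,t)=F(\al'_j(t))=\al_j(t)\qquad\text{for }(x,t)\in R_j,
\]
collapsing each transverse segment to a point of $\al_j$; simultaneously, reparametrize $F$ along each Jordan circle $\sig'_i$ and along the previously built $\beta_i^\pm$ so that $T_j$ and $T_j^*$ are collapsed under $\psi$ to $\al_j(0)$ and $\al_j(1)$. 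These reparametrizations are degree-one monotone quotients of Jordan circles, hence preserve parametrized $(f\circ\psi)$-length, and the modified $\psi$ sends each $\beta_j^\pm$ homeomorphically onto $\al_j$, giving $\length_{f\circ\psi}(\beta_j^\pm)=\length_f(\al_j)<s$.

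The main obstacle is checking that these pointwise prescriptions fit together into a single monotone Lipschitz map on $S^2$: continuity along $\d R_j$ is exactly what forces the pre-collapse of $T_j$ and $T_j^*$ along $\sig'_i$ and $\beta_i^\pm$, and the inductive consistency requires that every reparametrization introduced at step $j$ respect the graph built at stages $1,\ldots,j-1$; monotonicity is maintained because each fibre $\psi^{-1}(y)$ is the connected union of $F^{-1}(y)$ with the relevant transverse segment in some $R_j$ (or, at the collapse points, with the corresponding arc $T_j$ or $T_j^*$). Choosing the $R_j$ thin and well separated and $\eps_0\ll\eps$ keeps the Lipschitz constant controlled and makes $\psi$ globally $\eps$-close to the identity, from which properties (1)--(6) follow. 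Finally, the graph $\Gam'=\d\Sig'\cup\Lam_k$ acquires a natural polygonal structure: at each inductive step adjoining $\beta_j^-\cup T_j^*\cup\beta_j^+\cup T_j$ to $\sig'_0\cup\Lam_{j-1}$ removes one hole and splits a planar component into discs, so iterating produces the desired polygonal disc decomposition of $\Sig'$.
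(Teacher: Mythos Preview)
Your overall plan—cut by $f$-short arcs $\al_j$, straighten the resulting graph via Proposition~\ref{Pro:pre-homotopy}, then double each arc into a pair $\beta_j^\pm$—matches the paper's. The doubling step, however, does not work as written. You set $\psi(x,t)=\al_j(t)$ on the thin rectangle $R_j\cong[-1,1]\times[0,1]$ and, implicitly, keep $\psi=F$ on the two-dimensional complement (your fibre description $\psi^{-1}(y)=F^{-1}(y)\cup(\text{transverse segment})$ confirms this reading). At a point $(1,t)\in\beta_j^+$ the two prescriptions disagree: from inside $R_j$ the limit is $\al_j(t)$, while from outside it is $F(1,t)$, which is $\eps_0$-close to $(1,t)$ itself and not to $\al_j(t)$. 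You treat continuity only at the short ends $T_j,T_j^*$ via circle reparametrizations; along the long sides $\beta_j^\pm$ the map is discontinuous. For the same reason $\psi$ fails to be surjective: $\psi(\Sig')=F(\Sig'\setminus\operatorname{int}R_j)\cup\al_j$ misses the open strip on either side of $\al_j$.

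What is missing is a mechanism that expands $\Sig'\setminus R_j$ back onto all of $\Sig$ while simultaneously sending $\beta_j^\pm$ onto $\al_j$. The paper supplies this via the $\eps$-squeeze device introduced just before the lemma. One first adds an auxiliary piecewise geodesic so that the straightened arc $\beta^-$ lies in the boundary of bilipschitz discs, and then applies on each such disc an $\eps$-squeeze $\psi'$: a surjective monotone Lipschitz self-map that collapses a thin collar onto the boundary while stretching the core over the whole disc. A piecewise geodesic $\beta^+$ chosen inside that collar is then mapped monotonically by $\psi'$ onto an arc in $N_\eps(\beta^-)\cap P_1$, and one sets $\psi=F\circ\psi'$. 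The expansion built into the squeeze is exactly what compensates for the collapse of the strip between $\beta^-$ and $\beta^+$; without such a device your construction cannot be made into a continuous surjective map of $S^2$.
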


\begin{figure}[ht]
    \centering
    \includegraphics[scale=0.35,trim={1cm 4cm 0cm 0cm},clip]{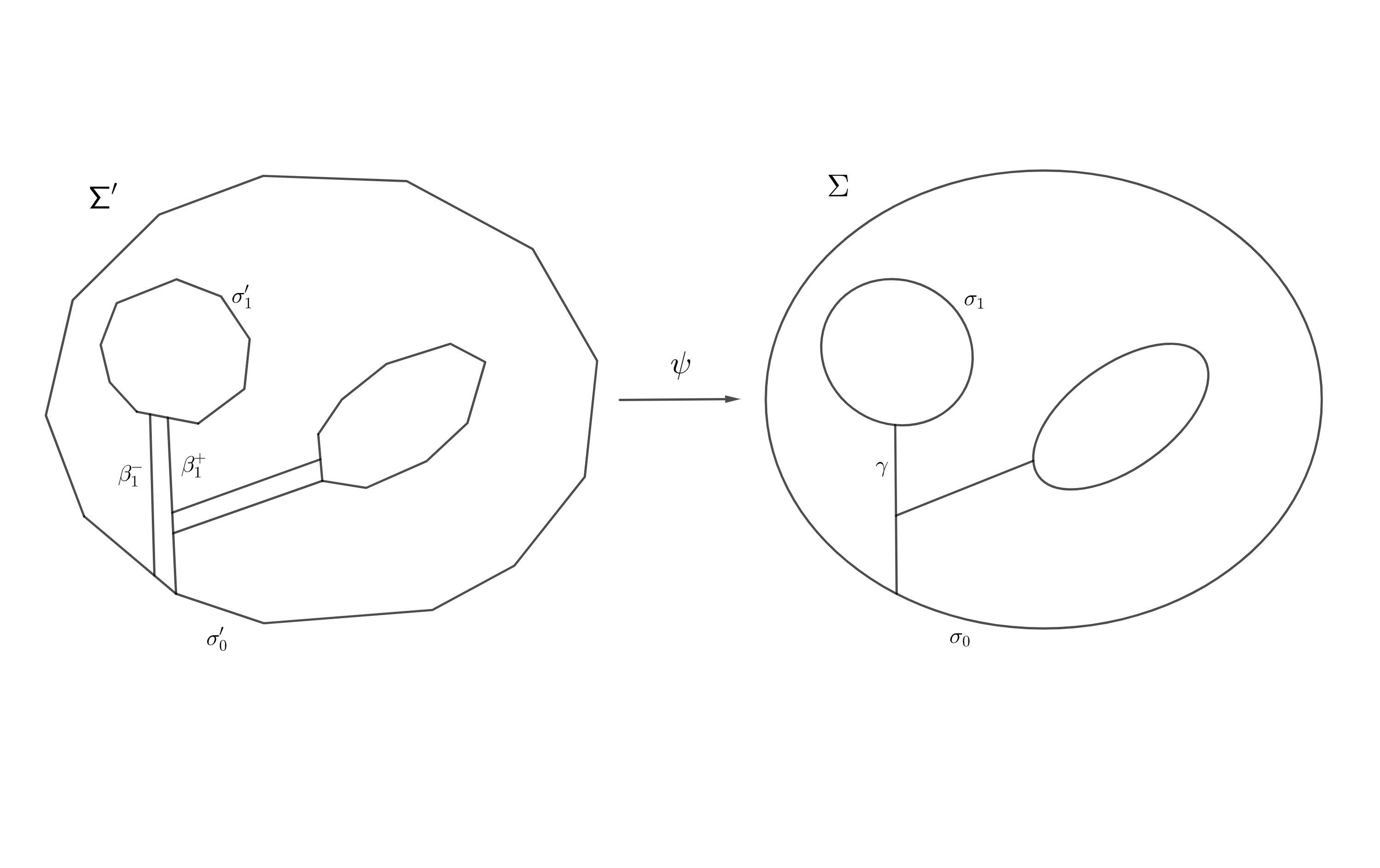}
    \caption{A polygonal resolution of a disc with two holes.}
    \label{fig:poly_resol}
\end{figure}

\begin{proof}
We prove the statement by induction on $k$.
For $k=1$, choose an arc $\ga\subset\Sig$ joining $\sig_1$
to $\sig_0$ with $\ga\cap\d \Sig=\d\ga$ and $\length_f(\ga)<s$.
For $\eps>0$, apply Proposition~\ref{Pro:pre-homotopy}  to the finite Lipschitz graph  
$\Ga_1=\d\Sig\cup\ga$ to obtain an $\eps$-close piecewise geodesic graph $P_1$ and a map $F\colon S^2\to S^2$ $\eps$-close to the identity. We define $\beta^-$ to be the edge of $P_1$ corresponding to $\ga$, and $\Sig'$ to be the annulus with boundary $P_1\setminus \beta^-$. We then add an auxiliary piecewise geodesic $\al\subset \Sig'$ to $P_1$ such that the closures of the components of $\Sig\setminus (P_1\cup\al)$ are bilipschitz discs. 
Next, we choose an $\eps$-squeeze on each of those discs to define a monotone Lipschitz map
$\psi'\colon \Sig'\to\Sig'$. We then find a piecewise geodesic 
$\beta^+$, disjoint, but $\eps$-Hausdorff close to $\beta^-$,
that is mapped monotonically onto an arc in $N_\eps(\beta^-)\cap P_1$ by $\psi'$.
We obtain the induction base by setting $\psi=F\circ\psi'$.
The induction step is similar. We ignore the boundary circle $\sig_k$ and apply the induction hypothesis to the disc with $(k-1)$ holes. So we may assume that $\Sig$ has piecewise geodesic boundary and
we have already found $k-1$ pairs of piecewise geodesic paths 
$\beta_j^\pm\subset\Sigma$ with the right properties.
To proceed, we choose an arc $\ga$ with $\length_f(\ga)<s$ joining $\sig_k$
to $\sig_0\cup\Lambda_{k-1}$ where $\Lambda_l=\bigcup_{i=1}^{l}(\beta_i^-\cup\beta_i^+)$ for $1\leq l\leq k$.
As above, we apply Proposition~\ref{Pro:pre-homotopy}  to replace $\ga$
by a piecewise geodesic $\beta_k^-$ and then use $\eps$-squeezes to
produce $\beta_k^+$.
\end{proof}

\subsection{Nagata dimension} \label{Sect:nag-dim}

We recall the definition of the Nagata dimension (see~\cite{LanS}). A family
$\cB = \{B_i\}_{i \in I}$ of subsets of a metric space $X$ is called
{\em $D$-bounded} if every $B_i$ has diameter at most $D$. We say that $\cB$
has {\em $s$-multiplicity at most $m$} if every set of diameter $\le s$
in $X$ meets no more than $m$ members of the family. The {\em Nagata dimension} 
of $X$ is {\em at most} $n$ if and only if there exists a constant $c$ such that 
for every $s > 0$, $X$ admits a $cs$-bounded covering with $s$-multiplicity
at most $n + 1$.

If $\{B_i\}_{i \in I}$ is a $D$-bounded covering of a subset $S \sub X$ with 
$s$-multiplicity at most $n+1$, and if $\eps \in (0,s/2)$, then the 
corresponding family of the open $\eps$-neighborhoods $U_\eps(B_i)$ forms a
$(D + 2\eps)$-bounded cover of $U_\eps(S)$ with $(s - 2\eps)$-multiplicity at most $n+1$.
In particular, with a slight adjustment of the constant $c$, the coverings
in the above definition can be taken open.

The Nagata dimension will be used through the following lemma.

\begin{Lem} \label{Lem:complex}
Suppose that $X$ is a proper $\CAT(0)$ space, and $K \sub X$ is a compact subset
with a $cs$-bounded covering with $s$-multiplicity at most $n + 1$, for some integer $n$ and positive constants $c$ and $s$. Then there exist
\begin{enumerate}
\item[\rm (1)]
a finite simplicial complex $\Sig$ of dimension at most $n$, metrized as a subcomplex of some simplex of edge length $s$ in a Euclidean space;
\item[\rm (2)]
a constant $L > 0$ depending only on $n$ and $c$, and 
an $L$-Lipschitz map $\psi \colon K \to \Sig$;
\item[\rm (3)]  
a Lipschitz map $\phi \colon \Sig \to X$ with $\phi(\Sig^{(0)}) \sub K$ that is
$L$-Lipschitz on every simplex of\/ $\Sig$, such that $\phi \circ \psi$ is $L$-Lipschitz and satisfies $d(x,\phi \circ \psi(x)) \le Ls$ for all $x \in K$.
\end{enumerate}
\end{Lem}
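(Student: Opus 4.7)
The plan is to apply the classical Nagata-type nerve construction with a Lipschitz partition of unity, as in Lang--Schlichenmaier. First, by the remark preceding the lemma, I may pass to an open $c's$-bounded covering $\{B_i\}_{i\in I}$ of a neighborhood of $K$ with $s$-multiplicity still at most $n+1$, where $c'$ depends only on $c$; discarding indices with $B_i\cap K=\es$ and using compactness of $K$, the set $I$ may be assumed finite. I then define the capped distance $\sigma_i(x):=\max\{0,\,s/4-d(x,B_i)\}$ on $X$. Each $\sigma_i$ is $1$-Lipschitz; for $x\in K$, some $B_i$ contains $x$, so $\sigma_i(x)=s/4$, while $\sigma_i(x)>0$ forces $B_i$ to meet $B(x,s/4)$, a set of diameter at most $s/2<s$. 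The $s$-multiplicity hypothesis yields $1\le\#\{i:\sigma_i(x)>0\}\le n+1$, so the sum $\Sig(x):=\sum_i\sigma_i(x)$ lies in $[s/4,(n+1)s/4]$, and the weights $\rho_i:=\sigma_i/\Sig$ form a Lipschitz partition of unity on $K$ with individual Lipschitz constants of order $1/s$, the implicit constants depending only on $n$.

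Next I build $\Sig$ and $\psi$. Introduce a vertex $v_i$ for each $i\in I$ and place these at pairwise distance $s$ in some Euclidean space $\R^N$. Let $\Sig$ be the subcomplex of the full simplex consisting of those faces $\{v_{i_0},\ldots,v_{i_k}\}$ for which there exists $x\in K$ with $\rho_{i_j}(x)>0$ for all $j$. By the bound on $\#\{i:\rho_i(x)>0\}$ above, $\dim\Sig\le n$. Define $\psi\colon K\to\Sig$ by $\psi(x):=\sum_i\rho_i(x)v_i$; by construction $\psi(x)$ lies in the simplex spanned by $\{v_i:\rho_i(x)>0\}$. For $x,y\in K$ with $d(x,y)<s/8$, each index $i$ with $\rho_i(x)\rho_i(y)$ not identically zero corresponds to some $B_i$ meeting $B(x,3s/8)$, a set of diameter $<s$, so at most $n+1$ indices enter simultaneously; writing $\psi(x)-\psi(y)=\sum_i(\rho_i(x)-\rho_i(y))(v_i-v_{i_0})$ with $i_0$ fixed and using the $O(1/s)$-Lipschitz bound on each $\rho_i$ together with $\|v_i-v_{i_0}\|\le s$, one obtains $\|\psi(x)-\psi(y)\|\le L_1\,d(x,y)$ for some $L_1=L_1(n)$.

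To construct $\phi$, for each $i\in I$ choose $\phi(v_i)\in B_i\cap K$. Given a simplex $\tau=[v_{i_0},\ldots,v_{i_k}]$ of $\Sig$ witnessed by some $x\in K$, every image $\phi(v_{i_j})$ lies within distance $c's+s/4$ of $x$, so $\diam\{\phi(v_{i_j})\}\le 2(c'+1/4)s$. Extend $\phi$ to $\tau$ by iterated geodesic coning: parametrize each edge by a constant-speed geodesic, and on a $k$-simplex cone the already defined map on its opposite $(k-1)$-face to $\phi(v_{i_k})$ along CAT(0) geodesics. Convexity of distance in $X$ together with an inductive argument on $k$ gives a Lipschitz bound on $\phi|_\tau$ of the form $L_2=L_2(n,c)$. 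Moreover, since CAT(0) balls are convex, $\phi(\tau)$ lies inside $B(x,(c'+1/4)s)$, and in particular $d(x,\phi\circ\psi(x))\le(c'+1/4)s\le Ls$ for every $x\in K$. Finally, nearby points $x,y\in K$ yield $\psi(x),\psi(y)$ in a common simplex by the argument above, so the $L_1$-Lipschitz bound on $\psi$ combined with the $L_2$-Lipschitz bound on $\phi|_\tau$ gives $\phi\circ\psi$ globally Lipschitz; taking $L:=\max\{L_1,L_2,c'+1/4\}$ completes the proof.

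The main obstacle is the third bullet: guaranteeing that the iterated geodesic coning of an $n$-simplex of edge length $s$ onto $X$ has Lipschitz constant bounded solely in terms of $n$ and $c$, independently of the combinatorics of the cover or the specific choices of $\phi(v_i)\in B_i\cap K$. This is precisely where CAT(0) geometry enters essentially: convexity of the distance function along geodesics controls the Lipschitz constant of each coning step by the diameter of the target, and the uniform bound on pairwise distances $\diam\{\phi(v_{i_j})\}\lesssim_{c} s$ propagates through the induction to give a bound depending only on $n$ and $c$.
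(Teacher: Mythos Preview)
Your proposal is correct and follows essentially the same approach as the paper, which simply invokes the Lipschitz partition of unity / nerve construction from \cite[Proposition~6.1]{BasWY} after passing to a finite open cover. The only point you leave slightly implicit is the passage from the local Lipschitz bound on $\phi\circ\psi$ (for $d(x,y)<s/8$) to the global one; this follows at once from the displacement estimate $d(x,\phi\circ\psi(x))\le (c'+\tfrac14)s$ you already proved, via the triangle inequality, and the paper likewise does not spell this out.
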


\begin{proof}
By the remark preceding the lemma, we may assume that the given covering is finite, as $K$ is compact. Now one can proceed as in~\cite[Proposition~6.1]{BasWY}, using a subordinate Lipschitz partition of unity. 
The fact that $\phi$ is Lipschitz, with a constant that is possibly not bounded in terms of $n$ and $c$, is not stated there; here, it holds since $\phi$ is locally Lipschitz and $\Sig$ is compact.
\end{proof}

We will also use the following related concept.
The {\em Assouad dimension} of a metric space $X$ is defined as the infimum of all (real) numbers 
$\beta > 0$ with the property that there exists some $C > 1$ 
such that, for every $\eps \in (0,1)$, every set of diameter $D$ in $X$ can 
be covered by no more than $C \eps^{-\beta}$ sets of diameter at most $\eps D$
(see~\cite[Definition~10.15]{Heinonen_book}). Unlike the Nagata dimension, this is in general not an integer. 
However, by~\cite[Theorem~1.1]{LeDR}, the Nagata dimension (as well as the Hausdorff dimension) is at most the Assouad dimension.
The result is quantitative, in that the constant $c$ in the definition of 
the Nagata dimension depends only on the Assouad dimension and the respective constant $C$.

\subsection{Metric integral currents}\label{Sec:currents}

For a proper metric space $X$, we let $\bI_{*,\cs}(X)$ denote the chain complex 
of metric integral currents with compact support in $X$, see \cite{AmbK,Lan_LC}. 
As the use of currents will be limited to the proofs of Theorem~\ref{Thm:tetra-filling} and 
Proposition~\ref{Pro:AR}, we give only a brief outline and refer 
to~\cite{GolL} for a more detailed introduction.

Formally, a current $T \in \bI_{n,\cs}(X)$ is an $(n+1)$-linear functional on the space 
of $(n+1)$-tuples of real-valued Lipschitz functions on $X$, subject to some further conditions. 
As a basic example, every compact set $B \sub \R^n$ with finite perimeter, in particular 
every closed ball or polytope, induces a current $T = \bb{B} \in \bI_{n,\cs}(\R^n)$ defined by
\[
\bb{B}(f_0,f_1,\ldots,f_n) := \int_B f_0 \det[\partial_j f_i]_{i,j=1}^n \,dx,
\]
where the partial derivatives exist almost everywhere by Rademacher's theorem.
Notice that for smooth functions $f_i$, this corresponds to the classical (de Rham) current 
given by the integration of the differential $n$-form $f_0 \,df_1 \wedge \ldots \wedge df_n$ on $B$.

For a current $T \in \bI_{n,\cs}(X)$, the boundary $\d T \in \bI_{n-1,\cs}(X)$ (if $n \ge 1$)
and the push-forward $\ph_\#T \in \bI_{n,\cs}(Y)$ under a Lipschitz map $\ph \colon X \to Y$
into another proper metric space $Y$ are defined by the relations
\begin{align*}
\d T(f_0,\ldots,f_{n-1}) &:= T(1,f_0,\ldots,f_{n-1}), \\
\ph_\#T(g_0,\ldots,g_n) &:= T(g_0 \circ \ph,\ldots,g_n \circ \ph).
\end{align*}
Note that $\d(\ph_\#T) = \ph_\#(\d T)$. Associated with $T \in \bI_{n,\cs}(X)$ is
a compactly supported Borel measure $\|T\|$ on $X$ with finite mass 
$\M(T) := \|T\|(X)$, and the support $\spt(T)$ of $T$ is defined as the support 
of $\|T\|$. If $\ph \colon X \to Y$ is a $\lam$-Lipschitz map, then 
$\M(\ph_\#T) \le \lam^n\,\M(T)$.
In the case of the above example $T = \bb{B}$, the measure 
$\|T\|$ is just the restriction of the Lebesgue measure to $B$. 
If $X$ is a $\CAT(0)$ space and $u \colon B^2 \to X$ is a Lipschitz disc with
connected fibers, then $\bigl\| u_\#\bb{B^2} \bigr\|(B) = \area(u(B^2) \cap B)$ for every
Borel set $B \sub X$.

Every singular Lipschitz $n$-chain $\sum_{i=1}^N a_i \sig_i$ in $X$, where $a_i \in \Z$ and 
the maps $\sig_i \colon \Del^n \to X$ are Lipschitz on an $n$-simplex $\Del^n \sub \R^n$,
induces a corresponding current $\sum_{i=1}^N a_i \,\sig_{i\#}\bb{\Del^n} \in \bI_{n,\cs}(X)$.
If $X$ is locally Lipschitz contractible in a suitable sense, then the resulting chain map from 
singular Lipschitz chains to compactly supported integral currents induces an isomorphism of 
the respective homology theories (see~\cite{RiSch_hom} and \cite{Mit_coin}). 
Moreover, with some additional assumptions on $X$, the subspace induced by
Lipschitz $n$-chains is dense in $\bI_{n,\cs}(X)$ with respect to the 
metric $(T,T') \mapsto \bN(T-T')$, where $\bN(T) := \M(T) + \M(\d T)$
(see~\cite[Theorem~1.3]{Gol} and~\cite[Corollary~1.5]{BasWY}).
One of the main benefits from passing to $\bI_{n,\cs}(X)$
is the following compactness theorem (see~\cite[Theorem~5.2 and Theorem~8.5]{AmbK}).

\begin{Thm} \label{Thm:cptness}
Given a sequence of integral currents $T_k \in \bI_{n,\cs}(X)$ with supports
in a fixed compact set $K \sub X$ and $\sup_k \bN(T_k) < \infty$, there exists a 
subsequence converging weakly, that is, pointwise as functionals, to an integral 
current $T \in \bI_{n,\cs}(X)$ with support in $K$.
\end{Thm}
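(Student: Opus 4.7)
The plan is to follow the standard two-step proof due to Ambrosio--Kirchheim: (i) establish weak compactness in the larger class of compactly supported normal currents, then (ii) upgrade this to integer rectifiability of the limit.

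For (i), observe that each $T_k$ acts as a multilinear functional on $(n+1)$-tuples of Lipschitz functions, and the definition of mass yields the uniform estimate
\[
|T_k(f_0, \ldots, f_n)| \leq \|f_0\|_\infty \prod_{i=1}^n \Lip(f_i) \cdot \M(T_k).
\]
Since $\spt(T_k) \sub K$ with $K$ compact, I would restrict attention to test functions on $K$; by Arzel\`a--Ascoli, the space of $1$-Lipschitz functions bounded by $1$ on $K$ is separable and relatively compact in the uniform topology. Choose a countable dense family of such test tuples, extract by diagonalization a subsequence $T_{k_j}$ along which $T_{k_j}(f_0, \ldots, f_n)$ converges for every member of the family, and extend the pointwise limit by density using the uniform estimate above. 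Multilinearity, continuity and locality all pass to pointwise limits under this bound, producing a metric current $T$. Lower semicontinuity of $\M$ and $\M \circ \d$ under weak convergence yields $\M(T) + \M(\d T) \leq \liminf \bN(T_{k_j}) < \infty$, and $\spt(T) \sub K$ since the pre-limit supports are contained there.

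The hard step is (ii): promoting normality of $T$ to integer rectifiability. The approach is to slice by an arbitrary Lipschitz map $\pi \colon X \to \R^m$ and exploit that slicing commutes with weak convergence for almost every level $y \in \R^m$. Induct on $m$, descending from $m = n$ to $m = 0$, where the slices of each $T_{k_j}$ are integer combinations of Dirac masses; the integrality of weights (bounded below by $1$) combined with the uniform mass bound prevents diffusion, so the weak limits remain atomic with integer weights. Applying Ambrosio--Kirchheim's rectifiability criterion, which characterizes integer rectifiability of a normal current through integrality of sufficiently many slices together with $\cH^n$-rectifiability of the mass concentration set, one then concludes $T \in \bI_{n,\cs}(X)$.

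The main obstacle is precisely this second step, which is the heart of Ambrosio--Kirchheim's rectifiability theorem. The absence of any smooth ambient structure forces rectifiability to be detected via Lipschitz projections into Euclidean space, and proving that enough such projections have rectifiable images is considerably more delicate than in the classical Federer--Fleming setting; the Kuratowski embedding of $K$ into $\ell^\infty(K)$ and the theory of metric differentiability both enter crucially. Within this paper the theorem is invoked only as a black box, so the outline above is essentially a sketch of \cite[Theorems~5.2 and~8.5]{AmbK} specialized to the compactly supported case.
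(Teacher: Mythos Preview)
The paper does not prove this theorem at all; it is stated as background with a bare citation to \cite[Theorem~5.2 and Theorem~8.5]{AmbK}. Your proposal correctly identifies this and gives a faithful outline of the Ambrosio--Kirchheim argument (weak sequential compactness of normal currents via a diagonal extraction on a dense family of test tuples, followed by the closure/rectifiability theorem via slicing), so there is nothing to compare --- your sketch \emph{is} the referenced proof, specialized to compact supports.
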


The mass is lower semi-continuous with respect to weak convergence: 
$T_k \to T$ implies that $\M(T) \le \liminf_{k \to \infty} \M(T_k)$.
For a cycle $S \in \bI_{n,\cs}(X)$ of dimension $n \ge 1$, 
a current $V \in \bI_{n+1,\cs}(X)$ is a {\em filling} of $S$ if $\d V = S$.
If $X$ is a $\CAT(0)$ space, then a geodesic cone construction shows that such 
a $V$ exists (see~\cite[Theorem~4.1]{Wen-FI}). A filling is {\em minimizing} if 
it has minimal mass among all fillings of its boundary cycle.
The first part of the following analogue of Theorem~\ref{Thm:Plateau} 
now follows directly from Theorem~\ref{Thm:cptness} together with the fact 
that the nearest point retraction $X \to K$ is $1$-Lipschitz.
For the assertion about the density, see~\cite[Corollary~4.4 and~(4.28)]{Wen-FI}.

\begin{Thm} \label{Thm:plateau-n}
Let $X$ be a proper $\CAT(0)$ space, and let $S \in \bI_{n,\cs}(X)$ be a cycle 
with support in a compact ball $K$. Then there exists a minimizing filling
$V \in \bI_{n,\cs}(X)$ of\/ $S$ with support in $K$. Furthermore, every minimizing 
filling $V$ satisfies
\[
\om_{n+1} \le \frac{\|V\|(B(x,r))}{r^{n+1}} \le \frac{\|V\|(B(x,s))}{s^{n+1}}
\]
whenever $x \in \spt(V)$ and $0 < r \le s < d(x,\spt(S))$.
\end{Thm}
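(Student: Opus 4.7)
The plan is to first establish existence of a minimizing filling by a direct variational argument using the compactness theorem, and then to derive the monotonicity formula by citing the corresponding bounds from \cite{Wen-FI}.

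For existence, I would begin by noting that fillings of $S$ do exist: by the geodesic cone construction for $\CAT(0)$ spaces (\cite[Theorem~4.1]{Wen-FI}), there is at least one $V_0 \in \bI_{n+1,\cs}(X)$ with $\d V_0 = S$, and in particular the filling mass
\[
\Fillvol(S) := \inf\{\M(V) : V \in \bI_{n+1,\cs}(X),\ \d V = S\}
\]
is finite. Pick a minimizing sequence $(V_k)$ with $\M(V_k) \to \Fillvol(S)$. To force the supports to lie in $K$, I would compose with the nearest-point projection $\pi \colon X \to K$, which is well-defined and $1$-Lipschitz because $K$ is a closed convex ball in a $\CAT(0)$ space. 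Since $\spt(S) \sub K$ implies $\pi_\# S = S$, the currents $W_k := \pi_\# V_k$ still satisfy $\d W_k = \pi_\# \d V_k = S$ and have $\spt(W_k) \sub K$; by the Lipschitz bound on push-forwards, $\M(W_k) \le \M(V_k)$, so $(W_k)$ is again a minimizing sequence. Since $\bN(W_k) = \M(W_k) + \M(S)$ is uniformly bounded and all supports lie in the fixed compact set $K$, Theorem~\ref{Thm:cptness} produces a weakly convergent subsequence $W_{k_j} \rightharpoonup V$ with $\spt(V) \sub K$. The boundary operator is weakly continuous, so $\d V = S$, and lower semicontinuity of mass gives
\[
\M(V) \le \liminf_{j \to \infty} \M(W_{k_j}) = \Fillvol(S),
\]
so $V$ is a minimizing filling with support in $K$.

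For the monotonicity of density ratios, I would simply invoke \cite[Corollary~4.4 and~(4.28)]{Wen-FI}, as the authors indicate. The underlying mechanism is the standard first-variation/slicing argument: for $x \in \spt(V)$ and $0 < r < d(x,\spt(S))$, one slices $V$ by the distance function $d_x$ to obtain a cycle on $\partial B(x,r)$ for almost every $r$, and compares $V \mathop{\llcorner} B(x,r)$ with the geodesic cone from $x$ over this slice. The Euclidean cone inequality in $\CAT(0)$ spaces (a key ingredient of \cite{Wen-FI}) yields a differential inequality for $m(r) := \|V\|(B(x,r))$ of the form $m'(r) \ge \frac{n+1}{r} m(r)$ in the sense of distributions, and integrating gives both that $r \mapsto m(r)/r^{n+1}$ is non-decreasing and that its limit as $r \to 0$ is at least $\om_{n+1}$ at every point of the support.

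The only step that requires genuine care is checking that the push-forward $\pi_\# V_k$ really is a valid competitor --- this reduces to the identities $\pi_\# \d V_k = \d \pi_\# V_k$ and $\pi|_{\spt(S)} = \id$, both of which are immediate. Thus I do not expect any serious obstacle; the statement is essentially a packaging of the Ambrosio--Kirchheim compactness theorem with Wenger's cone and monotonicity estimates, both valid in arbitrary proper $\CAT(0)$ spaces.
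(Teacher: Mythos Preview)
Your proposal is correct and matches the paper's own justification essentially verbatim: the paper states that existence follows directly from Theorem~\ref{Thm:cptness} together with the fact that the nearest point retraction onto $K$ is $1$-Lipschitz, and refers to \cite[Corollary~4.4 and~(4.28)]{Wen-FI} for the density bounds. You have simply spelled out the details of that argument.
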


See also~\cite[Theorem~1.6]{Wen-EII}, where $X$ is not assumed to be proper.

Towards a linear isoperimetric inequality in dimensions above the asymptotic rank, 
the following result for a restricted class of cycles was proved (in a more general 
form) in~\cite[Theorem~1.1 and Theorem~7.3]{GolL}.
We say that $S \in \bI_{n,\cs}(X)$ has {\em $c$-controlled density}, for some 
constant $c > 0$, if $\|S\|(B(x,r))/r^n \le c$ for all $x \in X$ and $r > 0$.

\begin{Thm} \label{Thm:fillrad}
Let $X$ be a proper $\CAT(0)$ space of asymptotic rank at most~$n \ge 1$.   
Then for all $c > 0$ there exists $\rho = \rho(X,n,c) > 0$ such that every 
cycle $S \in \bI_{n,\cs}(X)$ with $c$-controlled density has a filling 
$V \in \bI_{n+1,\cs}(X)$ such that\/ $\M(V) \le \rho\,\M(S)$ and the 
support of\/ $V$ is within distance at most~$\rho$ from $\spt(S)$.
\end{Thm}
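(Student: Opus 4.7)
The plan is to combine Wenger's sub-Euclidean isoperimetric inequality from~\cite{Wen_AR} with the $c$-controlled density hypothesis via a multi-scale Federer--Fleming decomposition. Wenger shows that in a proper $\CAT(0)$ space of asymptotic rank at most $n$, every $n$-cycle $T \in \bI_{n,\cs}(X)$ admits a filling with $\Fillvol(T) \le \eps(\M(T)) \cdot \M(T)^{1+1/n}$ for some function $\eps(\cdot)\to 0$ at infinity. The controlled-density condition is scale-invariant and fixes a characteristic length $r_0 = (\M(S)/c)^{1/n}$. The challenge is that a direct application of Wenger's bound at scale $r_0$ yields at best $r_0 \M(S)$, which can be far from linear when $\spt(S)$ is elongated; we need both a linear mass bound and a support staying $O(1)$-close to $\spt(S)$.

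The strategy is to iterate over a geometric sequence of scales $r_k = \lam^k r_0$ with $\lam\in (0,1)$ to be chosen. At each scale $r_k$, cover $\spt(S)$ by balls of radius $r_k$ with bounded multiplicity (available from the Nagata/Assouad dimension of bounded subsets of $X$; compare Lemma~\ref{Lem:complex}) and apply Federer--Fleming deformation to produce a polyhedral approximation $S_k$ together with a bounding chain $R_k$ with $\d R_k = S-S_k$, $\M(S_k)\le C\M(S)$, and $\M(R_k)\le C r_k \M(S)$, all supported within $O(r_k)$ of $\spt(S)$ thanks to $c$-controlled density bounding the mass in each cover ball by $c r_k^n$. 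The differences $T_k:=S_k-S_{k+1}$ are cycles supported in $O(r_k)$-neighborhoods, with summable total mass $\lesssim \M(S)$. Applying Wenger's sub-Euclidean bound to each piece of $T_k$ at its natural scale $r_k$ produces a filling of mass at most $\eps_k\cdot r_k\cdot\M(T_k)$, where $\eps_k\to 0$ as $k\to\infty$. Summing over scales yields $\M(V)\le \sum_k \eps_k r_k\cdot C\M(S)$, and choosing $\lam$ small enough, combined with a coarse base-case cone filling at the largest scale, collapses this to $\rho\,\M(S)$ for a constant $\rho=\rho(X,n,c)$. The filling-radius assertion is essentially automatic: each local piece is filled within $O(r_k)$ of $\spt(S)$, and $\sum_k r_k$ converges.

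The main obstacle is extracting a summable \emph{quantitative} rate $\eps_k$ from Wenger's qualitative sub-Euclidean gain. That gain stems from the failure of every asymptotic cone of $X$ to contain an $(n+1)$-flat, and turning this non-existence into an explicit scale-wise rate requires either a compactness argument on asymptotic cones à la~\cite{Wen_AR} or the exponent improvement of~\cite{GolL}. A secondary technical difficulty is verifying that the residual cycles $T_k$ retain a controlled-density bound (with a possibly enlarged but uniform constant $c'$) throughout the iteration, so that Wenger's inequality continues to apply at each scale and the geometric telescoping of scales actually yields a linear, rather than merely sub-Euclidean, total filling volume.
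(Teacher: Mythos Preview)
First, note that the paper does not prove Theorem~\ref{Thm:fillrad}: it is quoted from~\cite[Theorem~1.1 and Theorem~7.3]{GolL} and used as a black box. So there is no ``paper's own proof'' to compare against; the question is whether your sketch stands on its own.

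It does not, and the gap is precisely the one you flag but do not resolve. Your scales $r_k = \lam^k r_0$ decrease to zero, and the pieces of $T_k$ live in balls of radius $O(r_k)$ with mass $\lesssim c\,r_k^n$ by controlled density. Wenger's sub-Euclidean bound from~\cite{Wen_AR} reads $\Fillvol(T) \le \eps(\M(T))\,\M(T)^{1+1/n}$ with $\eps(m) \to 0$ as $m \to \infty$, not as $m \to 0$. Plugging in a piece of mass $m_k \lesssim c\,r_k^n$ gives a filling of mass $\lesssim \eps(m_k)\,r_k\,m_k$, and since $m_k \to 0$ as $k \to \infty$, you obtain no decay of $\eps_k$; at small scales the inequality reverts to the Euclidean one. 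Conversely, if you ran the scales upward to make Wenger's gain bite, $\sum_k r_k$ would diverge and the filling-radius claim would fail. This tension --- sub-Euclidean gain lives at large scales, while the filling-radius bound forces you to stay at bounded scales --- is exactly what makes the theorem nontrivial.

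You acknowledge this by saying one needs ``the exponent improvement of~\cite{GolL}'', but that is circular: the quantitative exponent gain in~\cite{GolL} is the substantive content behind the very theorem you are trying to prove. The secondary issue you mention, that Federer--Fleming pushforwards need not preserve controlled density, is also real and would require work, but it is moot until the summability problem is fixed.
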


The bound on the filling radius will be used in the proof of Theorem~\ref{Thm:tetra-filling}.


\section{The Euclidean isoperimetric inequality for spheres}\label{Sec:ball_fill}

We first recall Reshetnyak's majorization theorem~\cite{Res} which implies the sharp Euclidean isoperimetric inequality
for curves.

\begin{Pro} \label{Pro:reshetnyak}
Let $X$ be a $\CAT(0)$ space. Then for every closed unit speed curve 
$\alpha \colon [0,l] \to X$ there exist a compact convex set $C \sub \R^2$, 
a surjective closed unit speed curve $\beta \colon [0,l] \to \d C$ that is 
simple except in the degenerate case when $C$ is a segment of length $l/2$,
and a $1$-Lipschitz map $f \colon C \to X$ such that 
$f(\beta(t)) = \alpha(t)$ for all $t \in [0,l]$.
In particular,
\[
\area(f) \le \area(C) \le \frac{1}{4\pi} \cdot l^2.
\]
\end{Pro}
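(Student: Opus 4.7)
The plan is to follow Reshetnyak's classical majorization argument \cite{Res}: establish the conclusion for closed geodesic polygons, then pass to the limit by Blaschke's selection theorem and Arzel\`a--Ascoli for general rectifiable curves.

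For the polygonal case, let $\alpha$ be a closed geodesic polygon in $X$ with successive vertices $x_0,x_1,\ldots,x_k=x_0$ and side lengths $l_i := d(x_{i-1},x_i)$, summing to $l$; by the triangle inequality, each $l_i \le l/2$. The first step is purely Euclidean: under this constraint there exists a convex polygon $\tilde P \subset \R^2$ with these consecutive side lengths (inscribe it in a circle of appropriate radius), and in the boundary case where some $l_i = l/2$ the polygon $\tilde P$ degenerates to the segment of length $l/2$ described in the statement, otherwise $\partial \tilde P$ is a simple Jordan curve. Label the vertices $\tilde x_0,\ldots,\tilde x_k=\tilde x_0$ and let $\beta \colon [0,l] \to \partial \tilde P$ be the arc-length parametrization sending the corresponding partition points to the $\tilde x_i$. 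The main step is to produce a 1-Lipschitz $f \colon \tilde P \to X$ with $f(\tilde x_i)=x_i$. Triangulate $\tilde P$ from $\tilde x_0$ into triangles $\tilde T_i$ with vertices $\tilde x_0,\tilde x_i,\tilde x_{i+1}$, and build $f$ inductively: assuming $d(x_0,x_i) \le |\tilde x_0 - \tilde x_i|$, the $\CAT(0)$ condition applied to the geodesic quadrilateral with vertices $x_0, x_i, x_{i+1}$ (thought of as two glued triangles) provides a 1-Lipschitz surjection of $\tilde T_i$ onto the Euclidean comparison triangle $\bar T_i$ of $\Delta(x_0,x_i,x_{i+1})$, which one then post-composes with the $\CAT(0)$ comparison map $\bar T_i \to X$; this is the essence of Reshetnyak's quadrilateral lemma, and propagates the inductive hypothesis to $i+1$. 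Convexity of $\tilde P$ ensures the triangle pieces glue to a globally 1-Lipschitz $f$ with $f \circ \beta = \alpha$.

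For a general rectifiable $\alpha$, take inscribed geodesic polygonal approximations $\alpha_n$ along finer partitions of $[0,l]$, converging uniformly to $\alpha$. The associated convex sets $\tilde P_n \subset \R^2$ have perimeter at most $l$, hence (after translating so that a fixed vertex sits at the origin) all lie in a common disc; Blaschke's selection theorem then extracts a subsequence converging in Hausdorff distance to a compact convex set $C$. The boundary parametrizations $\beta_n$ and the 1-Lipschitz maps $f_n$ are equicontinuous, so Arzel\`a--Ascoli produces limits $\beta \colon [0,l] \to \partial C$ and $f \colon C \to X$ satisfying the required relations. The main obstacle is the inductive 1-Lipschitz construction on the polygonal level: one must carefully propagate both the Lipschitz bound on each triangle piece and the diagonal inequality $d(x_0,x_i) \le |\tilde x_0 - \tilde x_i|$ through the successive applications of $\CAT(0)$ comparison, which is the essential content of Reshetnyak's majorization. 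The area estimate is then automatic: $\area(f) \le \area(C)$ because $f$ is 1-Lipschitz, and $\area(C) \le l^2/(4\pi)$ is the classical Euclidean isoperimetric inequality for a plane convex set of perimeter at most $l$.
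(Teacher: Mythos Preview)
The paper does not prove this proposition; it merely recalls it and cites Reshetnyak~\cite{Res}. Your outline does follow the classical strategy --- reduce to closed geodesic polygons, then pass to the limit via Blaschke selection and Arzel\`a--Ascoli --- and the limiting step and the final area estimate are fine.

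However, there is a genuine error in your polygonal step. You fix the majorizing convex polygon $\tilde P$ in advance as the \emph{cyclic} polygon with the given side lengths, and then attempt to verify the diagonal inequalities $d(x_0,x_i)\le|\tilde x_0-\tilde x_i|$ by induction on~$i$. This induction does not close: knowing $d(x_0,x_i)\le|\tilde x_0-\tilde x_i|$ and $d(x_i,x_{i+1})=|\tilde x_i-\tilde x_{i+1}|$ gives no control on $d(x_0,x_{i+1})$ versus $|\tilde x_0-\tilde x_{i+1}|$ without information on the angle at $\tilde x_i$, which the cyclic choice does not supply. In fact the cyclic polygon is simply the wrong object: take $X=\R^2$ and a unit-side rhombus with one diagonal close to~$2$. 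The cyclic polygon on side lengths $(1,1,1,1)$ is the unit square, whose diagonals equal $\sqrt2<2$, so no $1$-Lipschitz map from the square to the rhombus sending vertices to vertices exists. The majorizing convex set must depend on the geometry of the polygon in $X$, not only on its side lengths.

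Reshetnyak's actual argument builds $\tilde P$ inductively rather than prescribing it. One starts with the comparison triangle of $\triangle(x_0,x_1,x_2)$, glues the comparison triangle of $\triangle(x_0,x_2,x_3)$ along the common side, and if the resulting planar quadrilateral is non-convex at the vertex corresponding to $x_2$, one \emph{straightens} it (Alexandrov's arm lemma): rotating one triangle to open that angle to $\pi$ only increases the remaining diagonal and, by the $\CAT(0)$ inequality, yields a $1$-Lipschitz map onto the glued pair of comparison triangles. Iterating produces a convex polygon together with the required $1$-Lipschitz map $f$. Replacing your first bullet with this inductive construction repairs the proof; the rest of your outline then goes through.
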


We now turn to the Euclidean isoperimetric inequality for $2$-spheres, Theorem~\ref{main_3},
which we restate as follows.

\begin{Thm}\label{Thm:EII}
Let $X$ be a $\CAT(0)$ space. Then there exists a constant $C>0$ such that every Lipschitz $2$-sphere
$f \colon S^2 \to X$ extends to a Lipschitz ball $\bar f \colon B^3 \to X$ 
with volume 
\[
\vol(\bar f) \le C \cdot\area(f)^{3/2}.
\]
\end{Thm}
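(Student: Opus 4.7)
The strategy is to combine a Lipschitz coning construction with an iterated slicing decomposition, in the spirit of Wenger's filling scheme for $2$-cycles from~\cite{Wen-Gro}, while exploiting the topological control coming from $f$ being a sphere to ensure that the filling is an actual Lipschitz $3$-ball. First I would establish the coning ingredient: for any Lipschitz sphere $g\colon S^2\to X$ with image in a ball $B(p,r)$, the geodesic cone $\bar g(tx):=\gam_x(t)$, where $\gam_x$ is the constant-speed geodesic from $p$ to $g(x)$, extends $g$ to a Lipschitz $3$-ball whose volume satisfies $\vol(\bar g)\le \tfrac{r}{3}\cdot\area(g)$. The estimate uses that in $\CAT(0)$ the geodesic contraction to $p$ is $1$-Lipschitz in both arguments, so that the metric differential $|d\bar g_{(s,x)}|$ is controlled by $s\cdot|dg_x|$ together with a unit radial term; the area formula then does the rest. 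In particular, any sphere of diameter at most $c\cdot\area^{1/2}$ already admits an $O(\area^{3/2})$-filling.

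Next, for $f\colon S^2\to X$ of area $A$ and large diameter $R$, I would slice as follows. Fix $p$ in the image and set $\rho:=d(p,\cdot)\circ f\colon S^2\to\R$. By the spherical analogue of Proposition~\ref{Pro:reg_fiber} and the coarea inequality, almost every level $t$ has the property that $\rho^{-1}(t)$ is a finite disjoint union of rectifiable Jordan curves $C_1(t),\ldots,C_{m(t)}(t)$ in $S^2$, with an average bound $\int_\R \sum_j \len(f|_{C_j(t)})\,dt\le A$. Restricting $t$ to the middle third $[R/3,2R/3]$ yields a level with $\sum_j\len(f|_{C_j})\le 3A/R$. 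Since every Jordan curve on $S^2$ separates, these cuts decompose $S^2$ into topological discs $D_1,\ldots,D_n$ whose dual graph is a planar tree. I cap each cut by a minimal Lipschitz disc from Theorem~\ref{Thm:Plateau}, whose area is $\le \ell^2/(4\pi)$ by Proposition~\ref{Pro:reshetnyak}. Each disc $D_i$ together with its boundary caps then becomes a Lipschitz $2$-sphere $S_i$; summing over $i$, the total area is $A+O((A/R)^2)$ and the diameters have been cut down by roughly a factor of two.

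If each $S_i$ now satisfies $\diam(S_i)\le c\cdot\area(S_i)^{1/2}$, I cone each from a point in its image and glue: adjacent $S_i,S_j$ share caps with opposite orientations, and the planar-tree dual structure guarantees that iteratively attaching $3$-balls along $2$-disc faces remains a $3$-ball, producing a single Lipschitz $B^3$ filling $f$ of total volume $O(A^{3/2})$. Otherwise I recurse on the pieces with still-large diameter. The main obstacle will be the bookkeeping of this iteration: one must ensure that the added cap areas sum geometrically against the shrinking diameters, so that the cumulative cone budget stays within $O(A^{3/2})$, and simultaneously that the dual gluing graph remains a planar tree at every stage, so that the final assembly is topologically a $3$-ball and not merely a $3$-chain. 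This is precisely where restricting to Lipschitz $2$-spheres--rather than general $2$-cycles as in~\cite{Wen-Gro}--is decisive, since any Jordan curve on $S^2$ automatically separates it.
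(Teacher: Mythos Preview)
Your coning bound and the topological control --- disjoint Jordan curves on $S^2$ give a tree of planar pieces, and gluing $3$-balls along a tree of shared $2$-discs returns a $3$-ball --- are correct and match how the paper keeps the filling a genuine Lipschitz ball. (Minor point: the $D_i$ are discs with holes, not discs, until you cap all boundary components.) The genuine gap is the assertion that one slice at a level $t\in[R/3,2R/3]$ of $\rho=d(p,f(\cdot))$ cuts diameters ``by roughly a factor of two''. It does not. Take a thin Lipschitz $2$-sphere in $\R^3$ bounding a slender tube along the broken segment $p\to q\to q'$ with $p=0$, $q=(R,0,0)$, $q'=(R/2,\sqrt{3}R/2,0)$, so that $|p-q|=|q-q'|=|p-q'|=R$. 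The segment $qq'$ stays at distance $\ge R\sqrt{3}/2$ from $p$, hence for every $t\le 2R/3$ the \emph{outside} piece still contains points near $q$ and $q'$ at mutual distance $\approx R$; the inside piece, lying in $B(p,2R/3)$, has diameter bounded only by $4R/3$. Iterating your scheme on such zig-zag tubes cones each removed piece from distance $\sim R$, yielding total volume of order $R\cdot\area(f)$ rather than $\area(f)^{3/2}$; since $R/\sqrt{\area(f)}$ can be made arbitrarily large by thinning the tube, the bookkeeping you flag as ``the main obstacle'' is not merely delicate but impossible without a different mechanism.

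The paper's proof, following Wenger~\cite{Wen-Gro}, supplies that mechanism by abandoning diameter in favor of \emph{area} reduction. A Vitali-type selection based on the lower density $\liminf_{r\to 0}\area(S\cap B(f(p),r))/r^2\ge\pi$ and the coarea inequality produces finitely many disjoint balls $B_i=B(x_i,r_i)$ with $\area(S\cap B_i)\gtrsim r_i^2$, $\length(S\cap\partial B_i)\lesssim\area(S\cap B_i)^{1/2}$, and --- this is what you are missing --- $\sum_i\area(S\cap B_i)\ge\tfrac1{25}\,\area(S)$. Capping the generic Jordan-curve fibers on each $\partial B_i$ splits $f$ into spheres $S_a$ contained in some $(1+\rho)B_i$, which cone off for total volume $\le C\cdot\area(S)^{3/2}$ because now $r_i\lesssim\area(S\cap B_i)^{1/2}$, and a residual family $S_b$ with $\sum_b\area(S_b)\le q\cdot\area(S)$ for a fixed $q<1$. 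The recursion on the $S_b$ then sums a geometric series in area.
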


The following argument is similar to Wenger's proof of Gromov's filling inequality~\cite{Wen-Gro}.

\proof
Let $f\colon S^2\to X$ be a Lipschitz $2$-sphere with image $S\subset X$.
We will decompose $f$ into finitely many Lipschitz $2$-spheres $S=\bigcup_{a\in\mathcal A} S_a$ such that each $S_a$ can be efficiently filled by coning.
Our decomposition process works as follows. If $\Ga\subset S^2$ is a bilipschitz Jordan curve, 
then we fill $f(\Ga)$ by a Lipschitz disc $D\subset X$ using the isoperimetric inequality, thereby obtaining two Lipschitz $2$-spheres
$S^\pm\subset X$ that intersect in $D$.
If we then fill both of them by Lipschitz $3$-balls, we obtain a corresponding ball-filling for $S$.
In the following, $\length$, $\area$ and $\vol$ of the image $Z$ of a Lipschitz map will denote the appropriate Hausdorff 
measure of $Z$, counted with multiplicities. For instance, if $f\colon S^2\to X$ is a Lipschitz $2$-sphere
with image $S$ and the map $f$ is clear from the context, then we simply write
\[\area(S)=\int_X \# f^{-1}(x)\, d\Hm^2(x).\]
Note that, by the area formula, this coincides with $\area(f)$.

\bslem
There exist $q\in(0,1)$ and $C>0$,  such that we can fill finitely many disjoint Jordan curves $\Ga_j\subset S$ by Lipschitz discs $D_j$ to produce a decomposition \[S\cup\bigcup_j D_j=\bigcup_{a\in\mathcal A} S_a\cup\bigcup_{b\in\mathcal B} S_b\]
into Lipschitz $2$-spheres with the following properties.
\begin{enumerate}
	\item $\sum_{a\in\mathcal A}\Fillvol(S_a)\leq C\cdot \area(S)^{3/2}$;
	\item $\sum_{b\in\mathcal B}\area(S_b)\leq q\cdot \area(S)$.
\end{enumerate}  
\eslem

\proof
The following three ingredients are needed.
\begin{itemize}
	\item By the coarea formula, if $x \in X$, then
\[\length(S\cap\d B(x,r))\leq\frac{d}{dr}\area(S\cap B(x,r))\]
for almost every $r > 0$.
\item For almost every point $p\in S^2$ we have 
\[\liminf\limits_{r\to 0}\frac{\area(S\cap B(f(p),r))}{r^2}\geq \pi.\]
\item For any Lipschitz $2$-sphere $\ph\colon S^2\to B(x,r)$, coning off at the point $x$ provides a Lipschitz $3$-ball $\bar\ph\colon B^3 \to B(x,r)$ with
\[\vol(\bar\ph)\leq \frac{r}{3}\cdot \area(\ph).\]
\end{itemize}
The proof of \cite[Proposition]{Wen-Gro} provides for every $\eps\in(0,1)$ finitely many disjoint balls $B_i:=B(x_i,r_i)$, $i=1,\ldots,k$, such that 
\begin{enumerate}
	\item $\area(S\cap B_i)\geq\frac{\pi}{4}\eps\cdot r_i^2$;
	\item $\length (S\cap\d B_i)\leq 2(\pi\eps)^{1/2}\cdot\area(S\cap B_i)^{1/2}$;
	\item $\sum_{i=1}^k\area(S\cap B_i)\geq\frac{1}{25}\cdot\area(S)$.
\end{enumerate} 

By Proposition~\ref{Pro:reg_fiber}, we may additionally assume that $f^{-1}(\d B_i)$ is a disjoint union of Jordan curves up to a $\Hm^1$-negligible set and that  
the gradient of the function $x\mapsto d(x_i,f(x))$ is non-zero for $\Hm^1$-almost every point in $f^{-1}(\d B_i)$.
We choose an auxiliary  $\rho\in(0,\frac12)$ small enough, such that the inflated balls $(1+\rho)B_i:=B(x_i,(1+\rho)r_i)$ are still disjoint.
Let $\hat F:=\{\hat\Ga_j\}$ be the family of Jordan curves contained in $\bigcup_{i=1}^k f^{-1}(\d B_i)$. Let $\hat\Ga_j$ be such a Jordan curve with $f(\hat\Ga_j)\subset \d B_i$. For one of the components of $S^2 \sm \hat\Ga_j$, denoted 
$\Om_j$, the gradient of $x\mapsto d(x_i,f(x))$ is an inner normal vector of 
$\Om_j$ at $\Hm^1$-almost all points of $\Ga_j$.
Since $f$ is Lipschitz continuous, only finitely many $\hat\Ga_j$ have such an outer
domain $\Om_j$ with $f(\Om_j)\not\subset (1+\rho)B_i$.
We remove all other Jordan curves from $\hat F$ and denote the remaining finite family by $F=\{\Ga_j\}$.  By Proposition~\ref{Pro:pre-homotopy}, we may assume that $F$
consists of bilipschitz Jordan curves.
Next, we fill every Lipschitz circle $f|_{\Ga_j}$ by a Lipschitz disc $D_j$ such that $\area(D_j)\leq\frac{1}{4\pi}\cdot\length(\d D_j)^2$. We denote by 
$\{D_j\}_{j\in J_i}$ the family of discs with boundary in $\d B_i$ and assume
without loss of generality that $D_j \sub B_i$.
By property (2), their total area is controlled by
\[
\sum_{j\in J_i}\area(D_j)\leq \frac{1}{4\pi}\cdot\length(S\cap\d B_i)^2\leq \eps\cdot\area(S\cap B_i).
\]
By inserting the discs $D_j$, we have decomposed $f$ into finitely many Lipschitz $2$-spheres 
\[S\cup\bigcup_j D_j=\bigcup_{a\in\mathcal A} S_a\cup\bigcup_{b\in\mathcal B} S_b\]
such that each $S_a$ lies in $(1+\rho)B_i$ for some $i=i(a)\in\{1,\ldots,k\}$ and each $S_b$ intersects the balls $B_i$ only in some of the inserted discs $D_j$.
Denote by $\mathcal A_i\subset\mathcal A$ the subset of spheres in $(1+\rho)B_i$. Using the coning inequality and the properties of our decomposition, we estimate
the filling volumes as follows:
\begin{align*}
&\sum_{a\in\mathcal A}\Fillvol(S_a)\leq\sum_i \sum_{a\in\mathcal A_i}\frac{(1+\rho)r_i}{3}\cdot\area(S_a)\\ 
&\quad\leq\sum_i \frac{(1+\rho)r_i}{3}\cdot\biggl(\area(S\cap (1+\rho)B_i)+\sum_{j\in J_i}\area(D_j)\biggr)\\
&\quad\leq \sum_i \frac{(1+\rho)r_i}{3}\cdot\bigl(\area(S\cap (1+\rho)B_i)+\eps\cdot\area(S\cap B_i)\bigr)\\
&\quad\leq \sum_i r_i\cdot\area\bigl(S\cap (1+\rho)B_i\bigr)\leq \sum_i \frac{2}{\sqrt{\pi\eps}}\cdot\area\bigl(S\cap (1+\rho)B_i\bigr)^{3/2}\\
&\quad\leq\frac{2}{\sqrt{\pi\eps}}\cdot\area(S)^{3/2}.
\end{align*}
Next, we estimate the total area of the remaining spheres:
\begin{align*}
&\sum_{b\in\mathcal B}\area(S_b)\leq \frac{24}{25}\cdot\area(S)+\sum_j \area(D_j)\\
&\quad\leq\frac{24}{25}\cdot\area(S)+\eps\cdot\sum_i\area(S\cap B_i)\leq \frac{49}{50}\cdot\area(S)
\end{align*}
if $\eps\leq\frac{1}{50}$. This completes the proof of the Sublemma.
\qed
\medskip

To prove the theorem we will apply the Sublemma iteratively. Suppose that $S\subset B(o,R)$ for some $R>0$ and $o\in X$. 
Set $\la=q^{3/2}\in(0,1)$.
Choose $l\in\N$ such that $\frac{R}{3}\cdot\la^l\leq\area(S)^{1/2}$.
Applying the Sublemma $l$ times in $B(o,R)$, we can estimate the filling volume of $S$ as follows:
\begin{align*}
&\Fillvol(S)\leq\sum_{a\in\mathcal A_1}\Fillvol(S_a)+\ldots+\sum_{a\in\mathcal A_l}\Fillvol(S_a)+\sum_{b\in\mathcal B_l}\Fillvol(S_b)\\
& \leq C\cdot \biggl(\area(S)^{3/2}+\sum_{b\in\mathcal{B}_1}\area(S_b)^{3/2}+\ldots+\sum_{b\in\mathcal{B}_{l-1}}\area(S_b)^{3/2}
+\frac{R}{3}\cdot\sum_{b\in\mathcal B_l}\area(S_b)\biggr)\\
& \leq C\cdot \biggl(\area(S)^{3/2}+\la\cdot\area(S)^{3/2}+\ldots+\la^{l-1}\cdot\area(S)^{3/2}
+\frac{R}{3}\cdot\la^{l}\cdot\area(S)\biggr)\\
& \leq C\cdot \bigl(1+\la+\ldots+\la^{l-1}+1\bigr)\cdot\area(S)^{3/2}\\
&\leq C'\cdot\area(S)^{3/2}
\end{align*} 
with $C'=C\cdot\frac{2-\la}{1-\la}$, independent of $l$.
\qed


\section{Minimal triangles and tetrahedra} \label{sec:min_triangle}

We assume that $X$ is a proper $\CAT(0)$ space. Recall the notion of a minimal disc provided by Theorem~\ref{Thm:Plateau}.
For us, minimal discs filling geodesic triangles will play a special role, and we 
call such a disc a {\em minimal triangle}. 
The purpose of this section is to prove the linear isoperimetric inequality for 
minimal tetrahedra stated in Theorem~\ref{main_4}. A {\em minimal tetrahedron} is a Lipschitz $2$-sphere composed of four minimal triangles as faces, six geodesics as edges, and four points as vertices.

A key observation for the proof is that minimal triangles and 
tetrahedra in~$X$ have Nagata dimension at most $2$.
This is a consequence of the following a priori density bound for minimal triangles.

\begin{Lem} \label{Lem:triangle-density}
Let $u \colon B^2 \to X$ be a minimal triangle. For all $x \in X$ 
and $r > 0$,
\[
\area(u(B^2) \cap B(x,r)) \le \frac{3\pi}{2}\cdot r^2.
\]
\end{Lem}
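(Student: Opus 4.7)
The plan is to combine a cone comparison from $x$, leveraging the minimality of $u$, with a coarea inequality and the monotonicity formula from Theorem~\ref{Thm:Plateau}. Writing $V_r := u^{-1}(B(x,r))$ and $f(r) := \area(u|_{V_r}) = \area(u(B^2) \cap B(x,r))$, I decompose the topological boundary $\partial V_r$ into \emph{interior arcs} inside $U^2$, mapped by $u$ into $\partial B(x,r)$, and \emph{boundary arcs} on $S^1$, mapped into $u(S^1) \cap \overline{B(x,r)}$. Denote their total image lengths under $u$ by $L_A(r)$ and $L_B(r)$, respectively. The key a priori bound is $L_B(r) \le 6r$: since $u(S^1)$ is a geodesic triangle with three geodesic edges and the distance function to $x$ is convex along geodesics in a $\CAT(0)$ space, each edge crosses $B(x,r)$ in a subsegment of length at most $2r$.

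Next I would replace $u|_{V_r}$ by the cone from $x$ over $u|_{\partial V_r}$. Since every point of the image boundary lies at distance at most $r$ from $x$, this cone has area at most $\frac{r}{2}(L_A(r) + L_B(r))$, and minimality of $u$ yields $f(r) \le \frac{r}{2}(L_A(r) + 6r)$. Combining this with the coarea inequality $L_A(r) \le f'(r)$ (the same one used in the proof of Theorem~\ref{Thm:EII}) gives
\[
f(r) \le \frac{r}{2} f'(r) + 3r^2, \quad\text{equivalently}\quad \frac{d}{dr}\!\left(\frac{f(r)}{r^2}\right) \ge -\frac{6}{r}.
\]

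To conclude, I would split by the threshold $r_0 := d(x, u(S^1))$. For $r \le r_0$ no boundary arc appears in $\partial V_r$, so the cone bound together with Reshetnyak's majorization applied to the disc components of $V_{r_0}$ yields $f(r_0) \le \min\bigl(\tfrac{r_0}{2} L_A(r_0),\; L_A(r_0)^2/(4\pi)\bigr) \le \pi r_0^2$; monotonicity from Theorem~\ref{Thm:Plateau} then propagates the density bound $\pi$ to all $r \le r_0$. For large $r$, Reshetnyak's majorization applied to the full boundary triangle of perimeter $P$ gives the total-area bound $\area(u) \le P^2/(4\pi)$, and hence $f(r)/r^2 \le \frac{3\pi}{2}$ once $r \ge P/(\pi\sqrt{6})$. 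The intermediate range is bridged by integrating the differential inequality against these two endpoints. The main obstacle I anticipate is twofold: handling components of $V_r$ with nontrivial topology in the cone step (which may require inserting slits to restore a valid disc competitor with only negligible extra area), and extracting the precise constant $\frac{3\pi}{2}$—rather than a merely finite bound—from the optimization in the intermediate range, which may call for a finer comparison than the crude estimate $L_B \le 6r$.
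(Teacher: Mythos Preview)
Your approach is genuinely different from the paper's, but it has a real gap that prevents it from closing, and in particular from producing the sharp constant $\tfrac{3\pi}{2}$.

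The problematic step is the claim $f(r_0) \le \pi r_0^2$. Your bound reads $f(r_0) \le \min\bigl(\tfrac{r_0}{2}L_A(r_0),\, L_A(r_0)^2/(4\pi)\bigr)$, and you assert this minimum is at most $\pi r_0^2$. But that inequality is equivalent to $L_A(r_0) \le 2\pi r_0$, which you have not established---and in fact the \emph{opposite} inequality follows from your own cone comparison: for $r < r_0$ the cone bound gives $f(r) \le \tfrac{r}{2}L_A(r)$, while the lower density bound $f(r) \ge \pi r^2$ from Theorem~\ref{Thm:Plateau} then forces $L_A(r) \ge 2\pi r$. So the minimum is always $\ge \pi r_0^2$, and this step yields no usable upper bound at $r_0$. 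Consequently you have no anchor at the lower end, and the monotonicity from Theorem~\ref{Thm:Plateau} (which only says $f(r)/r^2$ is nondecreasing for $r < r_0$) cannot help either.

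This leaves only the differential inequality $\bigl(f(r)/r^2\bigr)' \ge -6/r$ together with an upper anchor at large $r$. Integration gives $f(r)/r^2 \le f(R)/R^2 + 6\ln(R/r)$ for $r \le R$, so an upper bound at $R$ propagates downward only with a logarithmic loss. Even taking $R$ at the Reshetnyak crossover $P/(\pi\sqrt{6})$, you obtain a bound that blows up as $r \to 0$ and never collapses to $\tfrac{3\pi}{2}$. You anticipated this difficulty yourself; it is not a matter of sharpening $L_B \le 6r$, but a structural limitation of the one-sided differential inequality.

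The paper avoids all of this by an extension trick: it glues a half-line to each vertex of the geodesic triangle and then a flat half-plane along each extended side, producing an enlarged $\CAT(0)$ space $\hat X$, and similarly extends the intrinsic disc $Z$ to a $\CAT(0)$ plane $\hat Z$. The minimal triangle then extends to a proper intrinsic minimal plane $\hat Z \to \hat X$, for which the monotonicity of density ratios holds for \emph{all} $r > 0$, not just up to the boundary. Since the extension is, outside a compact set, three Euclidean half-planes, the asymptotic density is exactly $\tfrac{3}{2}$, and global monotonicity gives $\area(u(B^2)\cap B(x,r)) \le \tfrac{3\pi}{2}r^2$ for every $r$. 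The constant $\tfrac{3\pi}{2}$ thus arises not from an optimization over an intermediate range but directly from the asymptotic geometry of the extended object.
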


\begin{proof}
By definition, $u$ is a solution to the Plateau problem for a geodesic triangle 
$\triangle\subset X$. If $\triangle$ is a Jordan curve, then the statement is a consequence of~\cite[Proposition~80]{Sta}
because geodesic triangles have total curvature at most $3\pi$.
However, the argument from \cite{Sta} also covers the case of a general triangle $\triangle$.
We recall the necessary steps, but refer the reader to \cite[Lemmata~78~and~79]{Sta} for more details. By Theorem~\ref{Thm:Plateau}, we can decompose $u$ as $u=u'\circ u''$  with $u''\colon B^2\to Z$ and $u'\colon Z\to X$. The space $Z$ is a $\CAT(0)$ disc retract and $u''$ is a surjective Lipschitz map with connected fibers. Furthermore, $u'$ is area minimizing among Lipschitz maps from $Z$, and preserves the length of every rectifiable curve. In particular, $u'$
is 1-Lipschitz and therefore $\d Z$ is a geodesic triangle
whose sides are mapped isometrically by $u'$.
We now extend $u'$ to a minimal plane in a larger $\CAT(0)$ space $\hat X$. We obtain $\hat X$ in two steps. We first glue a half-line to every vertex of $\triangle$ to obtain a $\CAT(0)$
space $X'$. Note that in $X'$ every side $s$ of $\triangle$
is contained in a complete geodesic $c_s$ that intersects $X$ precisely in $s$.
We then glue a flat half-plane along the boundary to every geodesic $c_s$ to obtain $\hat X$. By Reshetnyak's gluing theorem, $\hat X$
is $\CAT(0)$. In a similar way, we extend $Z$
to a $\CAT(0)$ plane $\hat Z$. Note that the map $u'$
extends to a map $f\colon \hat Z\to \hat X$ that is locally isometric on $\hat Z \setminus Z$.
The map $f$ is a {\em proper intrinsic minimal plane} in the sense of \cite[Definition~41]{Sta}. In particular, $f$
satisfies the monotonicity of area densities for all radii
$r>0$ \cite[Proposition~62]{Sta}. By construction, the area growth of $f$ is equal to the area growth of a flat cone over a circle of length $3\pi$:
\[\lim\limits_{r\to\infty}\frac{\area(f(\hat Z)\cap B(x,r))}{\pi r^2}=\frac{3}{2}.\]
Hence, we obtain the required inequality. 
\end{proof}

In what follows, we use the dimension theory definitions  as provided in Section~\ref{Sect:nag-dim}.

\begin{Lem} \label{Lem:triangle-dim}
Let $u \colon B^2 \to X$ be a minimal triangle. Then $u(B^2)$ has 
Assouad dimension at most $2$ and hence also Nagata dimension at most~$2$, both with absolute constants. 
\end{Lem}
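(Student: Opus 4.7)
The plan is to bound the Assouad dimension of $u(B^2)$ by $2$ with an absolute constant; the Nagata bound then follows from \cite[Theorem~1.1]{LeDR} recalled at the end of Section~\ref{Sect:nag-dim}. I will establish the Assouad bound via the standard packing argument based on two-sided area density estimates.

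Given a subset $A' \sub u(B^2)$ of diameter $D$ and $\eps \in (0,1)$, take a maximal $\eps D$-separated family $\{y_1,\ldots,y_N\}\sub A'$. The balls $B(y_i, \eps D/2)$ are disjoint and contained in $B(y_1, 2D)$, so Lemma~\ref{Lem:triangle-density} yields
\[
\sum_{i=1}^N \area\bigl(u(B^2)\cap B(y_i, \eps D/2)\bigr) \le \area\bigl(u(B^2)\cap B(y_1, 2D)\bigr) \le 6\pi D^2.
\]
To conclude that $N\le C\eps^{-2}$ with absolute $C$, it then suffices to establish a matching lower bound $\area(u(B^2) \cap B(y_i, \eps D/2)) \ge c(\eps D)^2$ for every $y_i$ at distance at least $\eps D$ from each of the three vertices of the underlying geodesic triangle; the at most $O(1)$ vertex-close $y_i$'s will be counted separately by applying the same scheme on a smaller ball around each vertex.

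For $y_i$ with $d(y_i, u(S^1)) \ge \eps D/2$ the lower bound is immediate from the monotonicity of density in Theorem~\ref{Thm:Plateau}. For $y_i$ lying closer than $\eps D/2$ to a side of the triangle but farther than $\eps D$ from every vertex, I will invoke the extension of $u$ to the proper intrinsic minimal plane $f \colon \hat Z \to \hat X$ constructed inside the proof of Lemma~\ref{Lem:triangle-density}. Since $\hat Z$ is a $\CAT(0)$ plane and $f$ is locally isometric on $\hat Z\setminus Z$, the minimal disc glues flatly with the adjacent attached flat half-plane along the boundary geodesic at such non-vertex points, so the density of $f(\hat Z)$ there equals one. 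Combined with the area density monotonicity \cite[Proposition~62]{Sta}, this yields $\area(f(\hat Z) \cap B(y_i, r)) \ge \pi r^2$ for every $r>0$. Since $y_i$ is away from all vertices, only the single adjacent flat half-plane meets $B(y_i,\eps D/2)$, contributing area at most $\pi(\eps D/2)^2/2$; subtracting this from the $f$-area gives the required $\area(u(B^2) \cap B(y_i, \eps D/2)) \ge \pi(\eps D)^2/8$. The step I expect to be most delicate is precisely this verification of density one at non-vertex boundary points, together with the bookkeeping of which half-plane contributions need to be subtracted when the ball $B(y_i,\eps D/2)$ straddles the boundary of $u(B^2)$.
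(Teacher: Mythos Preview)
Your overall strategy---bounding the Assouad dimension via a packing argument driven by the upper density bound of Lemma~\ref{Lem:triangle-density} and a matching lower bound---is exactly the paper's. The difference lies in how points close to the boundary $u(S^1)$ are handled, and here your route has a genuine gap.

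The paper does not attempt any area lower bound for boundary-close points. Instead it splits the $3r$-separated set $Z$ into $Z'$ (points within $r$ of $u(S^1)$) and $Z\setminus Z'$. For $Z\setminus Z'$ the monotonicity in Theorem~\ref{Thm:Plateau} gives $\area(u(B^2)\cap B(z,r))\ge\pi r^2$, and the packing argument runs as you describe. For $Z'$ one simply takes a nearest-point projection $\pi\colon Z'\to u(S^1)$; since projection onto a geodesic is $1$-Lipschitz in $\CAT(0)$, the images are $r$-separated, and on each of the three sides they lie in a set of diameter at most $s$, giving $\#Z'\le 3(s/r+1)\le 21/\eps$. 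No minimal-plane extension, no half-plane bookkeeping, no vertex case.

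Your subtraction argument, by contrast, breaks down. The claim that ``only the single adjacent flat half-plane meets $B(y_i,\eps D/2)$'' fails for thin triangles: if two sides run within distance $\eps D/2$ of each other over a stretch of length $D$ (as for an isoceles triangle with a small apex angle), then a point $y_i$ in this strip is far from every vertex yet close to both sides, so two half-planes contribute up to $\pi(\eps D/2)^2$ in total and the subtraction leaves nothing. More fundamentally, in such a thin region $\area(u(B^2)\cap B(y_i,\eps D/2))$ is of order $w\cdot\eps D$ with $w\ll\eps D$, so no uniform lower bound of the form $c(\eps D)^2$ can hold. The vertex case is also problematic as written: asserting that there are ``at most $O(1)$ vertex-close $y_i$'' amounts to a packing bound in a ball of radius $\eps D$, which is what you are proving. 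Both issues disappear once boundary-close points are handled by projection onto the three sides rather than by area.
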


\begin{proof}
Let $\eps \in (0,1)$, and let $Y \sub u(B^2)$ be a subset of diameter $s > 0$.
Put $r := \frac{\eps}{6}s$, and choose a maximal set $Z \sub Y$ of distinct points at mutual distance $> 3r$. 
The balls $B(z,3r)$ with $z \in Z$ cover $Y$, and their diameter is at most $\eps s$. 
Let $Z' \sub Z$ denote the set of points at distance at most $r$ from the geodesic triangle $u(S^1)$. 
Choosing a closest point projection $\pi \colon Z' \to u(S^1)$, we get a set $\pi(Z')$ of distinct points at mutual distance $> r$. 
Furthermore, the intersection of $\pi(Z')$ with each side of $u(S^1)$ has diameter at most $s$. 
It follows that
\[
\# Z' \le 3\Bigl( \frac{s}{r} + 1 \Bigr) \le \frac{21}{\eps} \le \frac{21}{\eps^2}.
\]
On the other hand, if $z \in Z \sm Z'$, then $\area(u(B^2) \cap B(z,r)) \ge \pi r^2$ by Theorem~\ref{Thm:Plateau}), 
and the $r$-neighborhood of $Z \sm Z'$ in $u(B^2)$ has diameter at most $s + 2r \le \frac43 s$ and area at most 
$\frac{3\pi}{2} \cdot (\frac43 s)^2 = \frac{8\pi}{3} s^2$ by Lemma~\ref{Lem:triangle-density}.
Hence,
\[
\#(Z \sm Z') \le \frac{8\pi s^2}{3\pi r^2} < \frac{96}{\eps^2}.
\]
This shows that $u(B^2)$ can be covered by $117 \eps^{-2}$ sets of diameter $\eps s$.
Thus, $u(B^2)$ has Assouad dimension at most~$2$, hence, by~\cite[Theorem~1.1]{LeDR}, also Nagata dimension at most~$2$, with an 
absolute implicit constant.
\end{proof}

We are now ready for the proof of Theorem~\ref{main_4}, which we restate for convenience.
The argument uses an idea of White~\cite{Whi-exist, Whi-hom}.

\begin{Thm} \label{Thm:tetra-filling} 
Let $X$ be a proper $\CAT(0)$ space  of asymptotic rank at most~$2$.
There exists a constant $\mu>0$ such that every minimal tetrahedron 
$f \colon S^2 \to X$ satisfies a linear isoperimetric inequality for fillings by balls,
\[
\Fillvol(f)\leq\mu\cdot\area(f).
\]
\end{Thm}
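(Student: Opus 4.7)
The plan is to combine the integral-current filling with linear mass and controlled filling radius furnished by Theorem \ref{Thm:fillrad}, with an idea of White \cite{Whi-hom} that converts a current filling into a Lipschitz map. The asymptotic-rank hypothesis enters only through Theorem \ref{Thm:fillrad}; the passage from currents to Lipschitz balls is enabled by the geometric control we have on the image of a minimal tetrahedron.

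I first gather geometric control on $K := f(S^2)$, which is the union of four minimal triangles $T_1,\ldots,T_4$. By Lemma \ref{Lem:triangle-density}, the push-forward cycle $S := f_\# \bb{S^2} \in \bI_{2,\cs}(X)$ has $c$-controlled density with absolute constant $c = 6\pi$, and by Lemma \ref{Lem:triangle-dim} the set $K$ has Nagata dimension at most $2$ with an absolute implicit constant. Applying Theorem \ref{Thm:fillrad} with $n = 2$ then yields a filling $V \in \bI_{3,\cs}(X)$ of $S$ with $\M(V) \le \rho \cdot \area(f)$ and $\spt V \subset N_\rho(K)$, where $\rho = \rho(X,2,6\pi)$. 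The filling-radius bound is critical: it places $V$ in a tube of fixed thickness $\rho$ around $K$, so the relevant scale for the polyhedral approximation below is $\rho$, independent of $\area(f)$.

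To realize $V$ by an honest Lipschitz ball, fix a scale $s$ comparable to $\rho$. Using the Nagata-dimension bound, Lemma \ref{Lem:complex} produces a $2$-dimensional simplicial complex $\Sigma$, embedded in a Euclidean simplex of edge length $s$, together with Lipschitz maps $\psi\colon K \to \Sigma$ and $\phi\colon \Sigma \to X$ such that $\phi \circ \psi$ is uniformly $Ls$-close to $\id_K$. The density \emph{lower} bound for minimal discs from Theorem \ref{Thm:Plateau} forces the subordinate cover of $K$ to have size $O(\area(f)/s^2)$, so $\Sigma$ carries only $O(\area(f))$ simplices at the fixed scale $s$. I then thicken $\Sigma$ to a contractible $3$-dimensional polyhedron $\hat\Sigma \subset \R^N$ (a small regular neighborhood in the ambient simplex) of comparable combinatorial complexity, and extend $\phi$ to a Lipschitz map $\hat\phi\colon \hat\Sigma \to X$ by sending $0$-simplices to nearby points of $K$ and interpolating across each simplex along $X$-geodesics, using the convexity of $X$. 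Following White, the push-forward chain $\psi_\# V$ is a combinatorial integral $3$-chain in $\hat\Sigma$ of mass at most $\text{const}\cdot \area(f)$, with boundary $(\psi \circ f)_\# \bb{S^2}$, and a simplicial realization produces a Lipschitz ball $\bar g \colon B^3 \to \hat\Sigma$ extending $\psi \circ f$ of volume $\le \text{const}\cdot \area(f)$. The composition $\hat\phi \circ \bar g$ fills $\phi \circ \psi \circ f$ with volume $\le \text{const}\cdot \area(f)$; gluing on the geodesic collar homotopy between $f$ and $\phi \circ \psi \circ f$, which contributes an extra $\le \text{const}\cdot s \cdot \area(f)$, yields the desired Lipschitz $3$-ball filling $f$ with $\vol \le \mu \cdot \area(f)$.

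The main obstacle is the White-type realization step: translating the current $\psi_\# V$ into a Lipschitz $3$-ball in $\hat\Sigma$ with volume linear in $\M(V)$. The Euclidean isoperimetric inequality for $2$-spheres (Theorem \ref{Thm:EII}) alone would only give an $\area(f)^{3/2}$ bound, so the argument must use $\psi_\# V$ as an explicit combinatorial guide for the filling rather than an abstract existence statement; this is the content of the idea imported from \cite{Whi-hom}. Everything else reduces to careful bookkeeping of constants and to checking that the polyhedral thickening $\hat\Sigma$ can be built with controlled complexity while admitting a Lipschitz extension $\hat\phi$ of the $\phi$ from Lemma \ref{Lem:complex}.
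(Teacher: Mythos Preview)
Your overall architecture matches the paper's---density control, Theorem~\ref{Thm:fillrad}, the Nagata bound, Lemma~\ref{Lem:complex}, and a geodesic collar---but you miss the central simplification and introduce a gap in its place.

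The key point is that $\Sigma$ is \emph{two}-dimensional. Hence any push-forward $\psi_\# V$ of a $3$-current is automatically zero, and so is $\psi_\# S = \d(\psi_\# V)$. The only role of $V$ is this vanishing conclusion; the mass bound $\M(V) \le \rho\,\M(S)$ is never used, and your simplex count for $\Sigma$ is irrelevant. From $\psi_\# S = 0$ one passes (via the current/Lipschitz homology isomorphism) to the statement that the Lipschitz $2$-cycle $\psi\circ f$ is null-homologous in $\Sigma$. The paper then cones off the $1$-skeleton of $\Sigma$ to obtain a simply connected but still \emph{two}-dimensional complex $\hat\Sigma$, so by Hurewicz $\psi\circ f$ bounds a Lipschitz $3$-ball there---necessarily of volume \emph{zero}. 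All of the filling volume therefore comes from the geodesic homotopy, giving $\mu = Ls$ directly. Your thickening of $\Sigma$ to a $3$-dimensional $\hat\Sigma$ works against you: it destroys the zero-volume conclusion and forces you into the ``simplicial realization'' step, which is where the genuine gap lies. Even granting a simplicial $3$-chain $C$ in $\hat\Sigma$ with $\d C$ equal to a simplicial approximation of $\psi\circ f$ and $\M(C) = O(\area(f))$, you still have to produce a Lipschitz map $B^3 \to \hat\Sigma$ (a ball, not a chain) with that volume; this is not automatic from White's argument, and you do not supply it.

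There is also a domain mismatch: you apply Lemma~\ref{Lem:complex} to $K = f(S^2)$, so $\psi$ is defined only on $K$, yet you write $\psi_\# V$ for a current $V$ supported in $N_\rho(K)$. The fix is to apply Lemma~\ref{Lem:complex} to the closed $\rho$-neighborhood of $f(S^2)$ (using that the Nagata cover of $f(S^2)$ inflates to one of a slightly larger neighborhood), which is exactly what the paper does.
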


\begin{proof}
Let $f \colon S^2 \to X$ be a minimal tetrahedron with image $\tau$.
By definition, $S^2$ is parametrized by four positively oriented bilipschitz 
discs $\ph_1,\ldots,\ph_4 \colon B^2 \to S^2$ such that 
each $u_i := f \circ \ph_i \colon B^2\to X$ is a minimal triangle.
Accordingly, the integral cycle $S := f_\#(\d\bb{B^3}) \in \bI_{2,\cs}(X)$ induced by $f$ can be written as the sum $\sum_i u_{i\#}\bb{B^2}$.
By Lemma~\ref{Lem:triangle-density}, we have
\[
\bigl\| u_\#\bb{B^2} \bigr\|(B(x,r))\leq \frac{3\pi}{2}\cdot r^2
\]
for every minimal triangle $u$, every $x\in X$ and $r>0$. Thus, $S$ 
has uniformly controlled density. Hence, by Theorem~\ref{Thm:fillrad},
$S$ admits a filling by an integral $3$-current $V \in \bI_{3,\cs}(X)$ 
whose support lies within uniformly bounded distance $s > 0$ from $\tau$.
By Lemma~\ref{Lem:triangle-dim}, $\tau$ has Nagata dimension at most~$2$. Since $\tau$ is compact,
and by the remark preceding Lemma~\ref{Lem:complex}, there exists an absolute constant $c$ 
and a finite $cs$-bounded cover of the closed $s$-neighborhood $K$ of $\tau$ with $s$-multiplicity at most~$3$. 
Applying Lemma~\ref{Lem:complex} to $K$,
we find a finite simplicial complex $\Sig$ of dimension at most~$2$, metrized as a subcomplex of some simplex of edge length $s$ in a Euclidean space, and Lipschitz maps 
$\psi \colon K \to \Sigma$ and $\phi \colon \Sigma \to X$.
The push-forwards by $\psi$ of the filling $V$, and hence also of the cycle $\d V$, are zero
as currents. Since $\Sig$ is piecewise Euclidean, it is locally conical.
In particular, by \cite[Corollary~1.4]{RiSch_hom} or \cite[Theorem~1.3]{Mit_coin},  the homology theories induced by compactly supported integral currents and by Lipschitz chains, respectively, are isomorphic.
Thus, the Lipschitz cycle $\psi \circ f$ is homologically trivial.
Now we cone off the $1$-skeleton of $\Sigma$ to get a simply connected
complex $\hat\Sig$. We can arrange that $\hat\Sigma$ is again metrized as a subcomplex of some simplex of edge length $s$ in a Euclidean space. We extend $\phi$ to a Lipschitz map  
$\hat\phi\colon\hat\Sig\to X$, by first choosing an arbitrary image point for the new vertex 
of $\hat\Sigma$, and then extending to all of $\hat\Sigma$, via coning.
By Hurewicz, $\psi \circ f$ bounds a Lipschitz $3$-ball $\Psi$ in $\hat\Sig$,
which has volume zero. Then, $\hat\phi\circ\Psi$ is a Lipschitz $3$-ball of volume zero 
that fills $v \circ f$, where $v=\phi\circ\psi$. We combine $\hat\phi\circ\Psi$ with the geodesic homotopy $h$
from $f$ to $v \circ f$ to obtain a Lipschitz $3$-ball filling~$f$. 
By Lemma~\ref{Lem:complex}, the map  $v$ is $L$-Lipschitz where $L>0$
depends only on $c$,
and has controlled displacement, $d(x,v(x)) \le L\cdot s$ for all $x \in K$.
In particular,
$\vol(h)\leq L\cdot s \cdot \area(f)$.
Thus, the result holds for $\mu:=L\cdot s$.
\end{proof}


\section{Quantitative triangulation of discs}\label{Sec:triangulation}

In this section, the goal is to prove the necessary  quantitative
triangulation result for discs.
The arguments are inspired by \cite{Pap-AC, Pap-CC}.
Other quantitative triangulation results of surfaces have recently appeared in \cite{CrRo_trian, NtRo_approx}.
Our setting differs in that the underlying topology of the metric spaces in question may be complicated. To preserve the good topological properties of surfaces, we will instead work with pseudo-metrics.

Let $X$ be a complete metric space, $Z$ a complete length space homeomorphic to a closed disc and  $\varphi\colon Z\to X$ a
 Lipschitz map. We denote by $d_\ph$ the associated {\em intrinsic pseudo-metric} on $Z$,
\[d_\ph(x,y):=\inf_\ga\len(\ph\circ\ga)\]
where the infimum runs over all paths $\ga$ in $Z$ joining $x$ and $y$. 
We define the {\em intrinsic radius} of $Z$ by
\[
\rad_\ph(Z):=\max_{x\in Z}d_\ph(x,\d Z).
\]
Moreover, for a curve $\ga\subset Z$ we will use $\len_\ph(\ga):=\len(\ph\circ\ga)$
to denote its {\em $\ph$-length}.
 
By a {\em separating arc $\alpha$} of $Z$ we mean a
simple arc $\alpha \sub Z$ that intersects $\d Z$ precisely in the two
(distinct) endpoints.

The following proposition is analogous to the result for van Kampen
diagrams on p.~796 in~\cite{Pap-AC}.

\begin{Pro} \label{Pro:sausage}
Suppose that $\rad_\ph(Z) \le r \le \length_\ph(\d Z)/17$. Then there exists
a separating arc $\alpha$ of $Z$ such that the two discs $D_1,D_2$ with
$D_1 \cup D_2 = Z$ and $D_1 \cap D_2 = \alpha$ satisfy
\[
  \length_\ph(\d D_1) \le 17r, \quad
  \length_\ph(\d D_2) \le \length_\ph(\d Z) - r. 
\]
\end{Pro}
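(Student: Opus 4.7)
The plan is to sweep $Z$ by the $\ph$-distance function from a chosen boundary point, in the spirit of \cite[p.~796]{Pap-AC}. Parametrize $\d Z$ by $\ph$-arclength as $\gamma\colon[0,L]\to\d Z$ with $L=\length_\ph(\d Z)\ge 17r$, fix $p_0=\gamma(0)$, and consider the function
\[
h\colon Z\to[0,\infty),\qquad h(x):=d_\ph(p_0,x).
\]
Since $d_\ph\le\Lip(\ph)\,d_Z$, the function $h$ is Lipschitz with respect to the original complete length metric $d_Z$ on $Z$. For $t\ge 0$, set $V_t:=\{h\le t\}$ and let $C_t\subset V_t$ be the connected component containing $p_0$. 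Because $\gamma([0,t])\cup\gamma([L-t,L])\subset V_t$ via the boundary, the set $A_t:=C_t\cap\d Z$ is an arc of $\d Z$ containing $p_0$, whose $\ph$-length $a(t):=\length_\ph(A_t)$ satisfies $a(t)\ge 2t$ for $t\le L/2$.

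Next I would transfer $h$ to the standard disc via any homeomorphism $Z\cong B^2$ and apply Proposition~\ref{Pro:reg_fiber}: for almost every $t$ the fiber $h^{-1}(t)\cap Z$ decomposes, modulo an $\Hm^1$-negligible set, into a finite union of rectifiable Jordan curves and rectifiable arcs meeting $\d Z$ in a finite set. For such a regular value $t$, the sub-disc $C_t$ has boundary given by the Jordan curve $A_t\cup\alpha_t$, where $\alpha_t$ is a single separating arc of $Z$ contained in $h^{-1}(t)$.

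The two required inequalities read
\begin{equation*}
a(t)+\length_\ph(\alpha_t)\le 17r \qquad\text{and}\qquad \length_\ph(\alpha_t)\le a(t)-r,
\end{equation*}
the second being equivalent to $\length_\ph(\d D_2)\le L-r$. Both are satisfied as soon as $\length_\ph(\alpha_t)\le 2r$ and $a(t)\in[3r,15r]$. Since $a(\cdot)$ is monotone, satisfies $a(t)\ge 2t$, and eventually reaches $L\ge 17r$, a continuity-of-measure argument selects a regular $t^\ast$ for which $a(t^\ast)$ lies in the target interval.

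The main obstacle is bounding $\length_\ph(\alpha_{t^\ast})$ by $2r$. A priori, Lipschitz level sets on a disc can be arbitrarily long, so the radius bound $\rad_\ph(Z)\le r$ must enter essentially. The idea is that the two endpoints of $\alpha_{t^\ast}$ lie on $\d Z$ at $\ph$-distance $t^\ast$ from $p_0$, while every point of $\alpha_{t^\ast}$ lies within $\ph$-distance $r$ from $\d Z$. One then uses an $\eps$-net of short $\ph$-paths from $\alpha_{t^\ast}$ to $\d Z$ together with connecting segments of $\d Z$ to construct an embedded competitor arc from endpoint to endpoint of $\alpha_{t^\ast}$ of $\ph$-length at most $2r+O(\eps)$ that induces the same partition of $\d Z$. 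Replacing $\alpha_{t^\ast}$ by this competitor and letting $\eps\to 0$ yields the desired estimate. Making this replacement precise -- guaranteeing embeddedness and that the competitor is again a separating arc producing the right $D_1,D_2$ -- is the technical heart of the argument.
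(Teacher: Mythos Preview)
Your proof has a real gap at the step you yourself flag. The competitor you sketch cannot yield a separating arc of $\ph$-length $\le 2r$ between the fixed endpoints $x_1,x_2$ of $\alpha_{t^\ast}$: splicing short drops from points of $\alpha_{t^\ast}$ to $\partial Z$ together with connecting boundary segments gives no control on the total $\ph$-length, since those boundary segments are not bounded in terms of $r$---precisely the shortcut phenomenon ($d_\ph$ small but $d'_\ph$ large along $\partial Z$) makes them long. Nothing else in your argument ties $\length_\ph(\alpha_{t^\ast})$, or any replacement with the same endpoints, to $r$; the bound $\rad_\ph(Z)\le r$ only says each point of $\alpha_{t^\ast}$ is $r$-close to \emph{some} boundary point, not that the arc is short. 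Two secondary issues compound this: $A_t=C_t\cap\partial Z$ need not be a single boundary arc (an interior shortcut can pull a far boundary point into $C_t$, so the ``one separating arc $\alpha_t$'' picture already fails), and transferring $h$ through an arbitrary homeomorphism $Z\cong B^2$ need not produce a Lipschitz function on $B^2$, so Proposition~\ref{Pro:reg_fiber} may not apply.

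The paper's proof circumvents exactly this obstruction by a dichotomy on boundary shortcuts rather than a single sweep. If some $p\in\partial Z$ satisfies $d_\ph(p,q)\le 4r\Rightarrow d'_\ph(p,q)\le 6r$, one takes a simple arc in the level set $\{d_\ph(p,\cdot)=3r\}$, finds on it a point within $r$ of \emph{both} components of $\partial Z\setminus\{p,\bar p\}$, and the no-shortcut hypothesis then controls the boundary arc that the resulting $\le 2r$ separating arc cuts off. Otherwise every boundary point admits a shortcut partner; taking a minimal such pair $(p,q)$ one cuts along it (or, if $d'_\ph(p,q)>9r$, along a shifted pair $(p,p')$), obtaining a separating arc of $\ph$-length up to $8r$---not $2r$---that removes a boundary arc of length between $6r$ and $9r$. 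It is this case split, not a level-set sweep from a single basepoint, that simultaneously controls the length of the separating arc and the length of the boundary arc it cuts off.
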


\begin{proof}
We denote by $d'_\ph$ the intrinsic pseudo-metric on $\d Z$ induced by $\ph|_{\d Z}$, thus
$d'_\ph(x,y) \ge d_\ph(x,y)$ is the shortest $\ph$-length of a subarc of $\d Z$ connecting 
$x,y\in\d Z$. Suppose first that there exists a point $p \in \d Z$ such that
\[
  q \in \d Z,\,d_\ph(p,q) \le 4r \Rightarrow d'_\ph(p,q) \le 6r.
\]
Let $\bar p \in \d Z$ be a point with
$d'_\ph(p,\bar p) = \frac12\length_\ph(\d Z)$, and note that $d_\ph(p,\bar p) > 4r$
because $d'_\ph(p,\bar p) > 6r$. Thus, the
set 
$S^\ph_{3r}(p)=\{d_\ph(p,\cdot)= 3r\}$ separates $p$
from $\bar p$. Hence, there exists a simple arc $\beta$ in $S^\ph_{3r}(p)$ with endpoints
$p_1,p_2 \in \d Z$ that still separates $p$ from $\bar p$ in $Z$. 
Since $\rad_\ph(Z) \le r$, there exists a point $q \in \beta$ at $d_\ph$-distance 
at most $r$ from both connected components of $\d Z \sm \{p,\bar p\}$. 
For $i = 1,2$, choose a point $q_i$ closest to $q$ in the component containing $p_i$.
Since $d_\ph(p,q) = 3r$, we have $2r \le d_\ph(p,q_i) \le 4r$, hence
$2r \le d'_\ph(p,q_i) \le 6r$ and $4r \le d'_\ph(q_1,q_2) \le 12r$.
There exists a separating arc from $q_1$ to $q_2$ of $\ph$-length less than or
equal to $2r$, and the resulting discs $D_1$ and $D_2$ containing
$p$ and $\bar p$, respectively, satisfy
\[
  \length_\ph(\d D_1) \le 14r, \quad
  \length_\ph(\d D_2) \le \length_\ph(\d Z) - 4r + 2r.
\]

In the remaining case, we have that for every $p \in \d Z$ there exists
a $q \in \d D$ with $d_\ph(p,q) \le 4r$ and $d'_\ph(p,q) \ge 6r$. We fix 
$p,q$ such that $d'_\ph(p,q)$ is minimal among all such pairs.
If $d'_\ph(p,q) \le 9r$, we choose a separating arc from $p$ to $q$ of $\ph$-length
less than $5r$ to get a decomposition with
\[
  \length_\ph(\d D_1) \le 14r, \quad
  \length_\ph(\d D_2) \le \length_\ph(\d Z) - 6r + 5r.
\]
If $d'_\ph(p,q) > 9r$, let $\beta \sub \d Z$ be a subarc from $p$ of $q$
of $\ph$-length $d'_\ph(p,q)$, and let $p' \in \beta$ be the point with
$d'_\ph(p,p') = 9r$. There exists a $q' \in \d Z$ with $d_\ph(p',q') \le 4r$ and
$d'_\ph(p',q') \ge d'_\ph(p,q)$ by the choice of $p$ and $q$. In particular,
$q' \not\in \beta$, thus any path in $Z$ from $p$ to $q$ separates
 $p'$ and $q'$. It follows that 
\[
d_\ph(p,p') < d_\ph(p,q)+d_\ph(p',q')\leq 8r,
\]
thus there exists a separating arc from $p$ to $p'$ of $\ph$-length
less than $8r$. This gives a decomposition satisfying the assertion of the lemma.
\end{proof}  

Recall the notion of a  disc decomposition from Section~\ref{Sec:gen_discs}.

\begin{figure}[ht]
    \centering
    \includegraphics[scale=0.35,trim={0cm 0cm 7cm 0cm},clip]{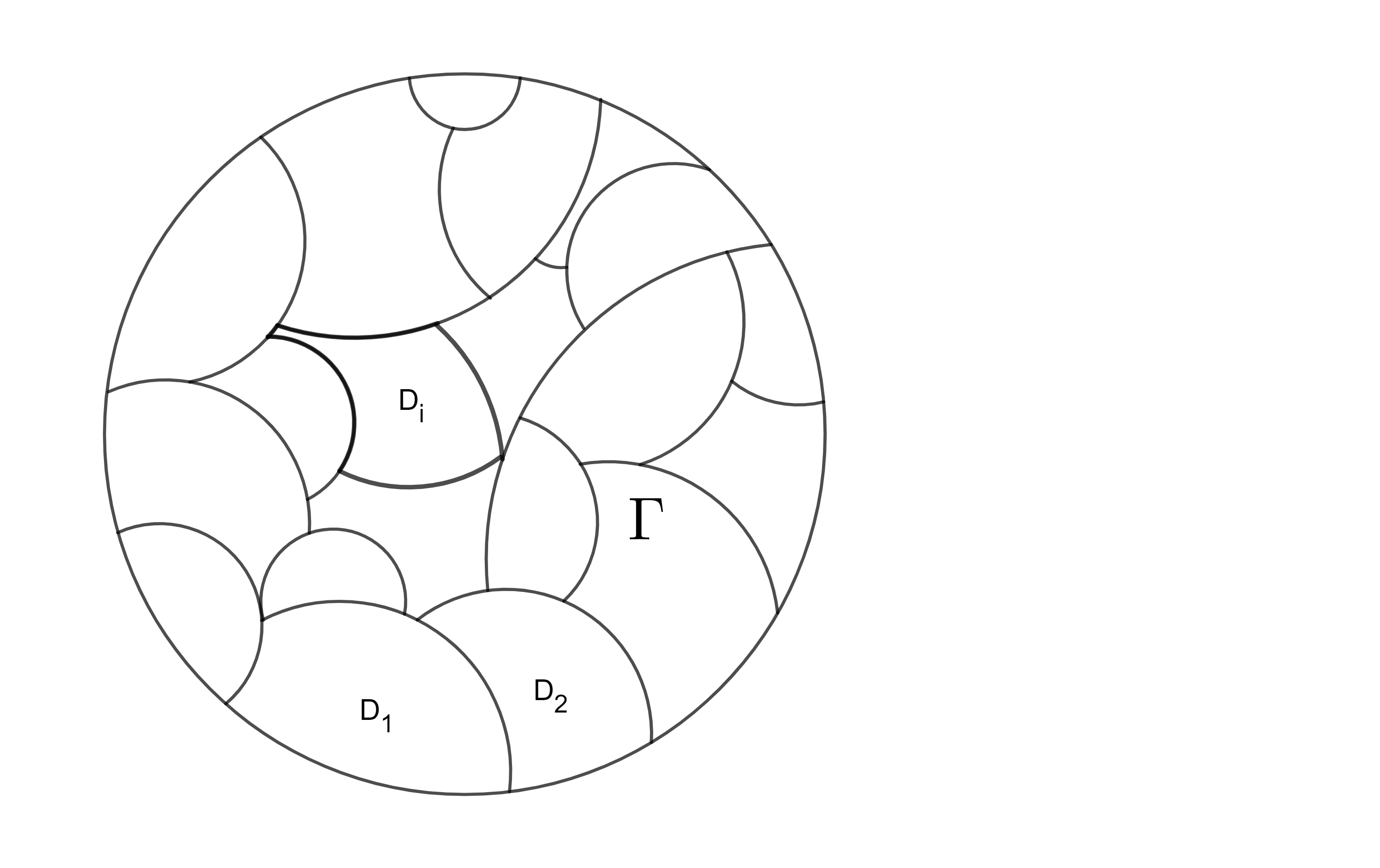}
    \caption{Example of a disc decomposition.}
    \label{fig:disc_decom}
\end{figure}

\begin{Cor}\label{Cor:sausage}
Let $r \ge \rad_\ph(Z)$. Then there exists a disc decomposition
$Z = D_1 \cup \ldots \cup D_n$ induced by an embedded graph $\Ga\subset Z$ such that
\begin{enumerate}
\item[\rm (1)] $n \le \length_\ph(\d Z)/r$, provided that $\length_\ph(\d Z) \ge r$;
\item[\rm (2)] $\length_\ph(\d D_i) \le 17r$ for $i = 1,\ldots,n$.  
\end{enumerate}
Furthermore, $D_i \cap D_j$ is either empty, a point, or a simple arc
for $i \ne j$.
\end{Cor}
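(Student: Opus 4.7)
The plan is to iterate Proposition~\ref{Pro:sausage}, at each step peeling off from the current disc one piece of boundary $\ph$-length at most $17r$ while shrinking the boundary of the remainder by at least $r$. If $\length_\ph(\d Z)<17r$ I take $n=1$, $D_1:=Z$, and the conclusion is immediate under the proviso $\length_\ph(\d Z)\geq r$. Otherwise Proposition~\ref{Pro:sausage} applies to $Z$ and yields a separating arc $\alpha_1$ decomposing $Z$ as $D_1\cup Z_1$ with $D_1\cap Z_1=\alpha_1$, $\length_\ph(\d D_1)\leq 17r$, and $\length_\ph(\d Z_1)\leq \length_\ph(\d Z)-r$. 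I then apply the same construction to $Z_1$ (with the restricted map $\ph|_{Z_1}$), producing $\alpha_2,D_2,Z_2$, and so on, until the remainder $Z_{n-1}$ has boundary $\ph$-length strictly less than $17r$; at that point I set $D_n:=Z_{n-1}$.

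The key intermediate observation is that Proposition~\ref{Pro:sausage} stays applicable at every stage, which reduces to the monotonicity $\rad_{\ph|_{Z_1}}(Z_1)\leq \rad_\ph(Z)\leq r$. Given $x\in Z_1$ and $\eps>0$, pick a path $\gamma$ in $Z$ from $x$ to $\d Z$ with $\length_\ph(\gamma)\leq \rad_\ph(Z)+\eps$, and truncate it at its first exit from $Z_1$; the exit point lies on $\d Z_1\subset \d Z\cup \alpha_1$, and the truncated subpath is at most as long as $\gamma$. Iterating, every remainder has intrinsic radius at most $r$, so the only stopping criterion is that the current boundary $\ph$-length drops below $17r$.

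The counting is then straightforward. After $k$ successful splits the remainder $Z_k$ has $\length_\ph(\d Z_k)\leq \length_\ph(\d Z)-kr$, which must be non-negative. More precisely, if at least one split occurred, then before the final split the remainder still had boundary $\geq 17r$, forcing $n\leq \length_\ph(\d Z)/r-15$; together with the base case this yields $n\leq \length_\ph(\d Z)/r$ whenever $\length_\ph(\d Z)\geq r$, while $\length_\ph(\d D_i)\leq 17r$ holds for every $i$ by construction.

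The final step is to promote $\d Z\cup \bigcup_i\alpha_i$ to an embedded graph $\Ga$ in the strict sense of Section~\ref{Sec:gen_discs}: connected, with every vertex of valence at least three, and with no two vertices joined by more than one edge. I expect this to be the main technical nuisance, since naive choices of separating arcs can share endpoints or run parallel to a boundary subarc between two existing vertices, creating valence-$2$ vertices or multiple edges. Both pathologies can be repaired by small in-place perturbations of each newly-added arc and by inserting a few auxiliary short connecting arcs, none of which changes $n$ or the per-piece bounds $\length_\ph(\d D_i)\leq 17r$ beyond absorbing a constant into the $17$. Once $\Ga$ is in this form, the intersection property that $D_i\cap D_j$ is empty, a point, or a simple arc is immediate from the construction, since two pieces either were never adjacent in the peeling process, share a single vertex on $\d Z$ or on some earlier $\alpha_k$, or share exactly one of the arcs $\alpha_k$.
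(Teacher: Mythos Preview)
Your proposal is correct and follows exactly the paper's approach: the paper's proof consists of two sentences, handling the base case $\length_\ph(\d Z)\le 17r$ with $n=1$ and otherwise invoking Proposition~\ref{Pro:sausage} and induction. You have simply unwound that induction and filled in the details the paper omits --- in particular the observation that $\rad_{\ph|_{Z_1}}(Z_1)\le\rad_\ph(Z)$ (which the paper takes for granted) and the explicit counting yielding $n\le\length_\ph(\d Z)/r-15$ when $n\ge 2$. Your acknowledgement that the graph and intersection conditions require some bookkeeping is appropriate; the paper's proof is silent on this point as well.
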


\begin{proof}
If\/ $\length_\ph(\d Z) \le 17r$, then the result holds for $n = 1$.  
If\/ $\length_\ph(\d Z) > 17r$, we use Proposition~\ref{Pro:sausage} and induction.
\end{proof}


\section{Improving 2-spheres}\label{Sec:imp}

This is a technical section that provides a tool needed in the  proof of our main result.
The setting we have in mind here is that we already know how to find fillings of small volume for all Lipschitz $2$-spheres
of area at most $A>0$.
Our aim is then to understand how to improve the intrinsic structure of a Lipschitz $2$-sphere $S$ 
of area only a little bit larger than $A$.

We begin with a useful calculus fact.

\begin{Lem} \label{Lem:convex}
Given $\del > 0$, there exists $\lam = \lam(\del) \in (0,1)$
such that
\[
 (x+y)^{1+\del} > (x+a)^{1+\del} + (y+a)^{1+\del}    
\]
whenever $0 < x \le y$ and $a \le \lam x$.
\end{Lem}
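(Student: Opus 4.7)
The plan is to reduce the inequality to a one-parameter family by scaling, evaluate at the boundary $a=0$, and then control the dependence on $a$ by a straightforward derivative estimate.

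First I would normalize by dividing through by $y^{1+\delta}$ and setting $t = x/y \in (0,1]$, $s = a/y$; the hypothesis $a \le \lambda x$ then reads $s \le \lambda t$ (in particular $s \le \lambda$). The inequality becomes
\[
F(t,s) := (1+t)^{1+\delta} - (t+s)^{1+\delta} - (1+s)^{1+\delta} > 0
\]
for all $t \in (0,1]$ and $0 \le s \le \lambda t$, and the claim is that some $\lambda = \lambda(\delta) \in (0,1)$ makes this hold.

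Next I would examine the slice $s = 0$, where $F(t,0) = (1+t)^{1+\delta} - t^{1+\delta} - 1$. Strict super-additivity of $u \mapsto u^{1+\delta}$ (equivalently, the fact that $h(t) := (1+t)^{1+\delta} - t^{1+\delta}$ is strictly increasing with $h(0)=1$) gives $F(t,0) > 0$ on $(0,1]$. The key quantitative point is that $F(t,0)/t$ extends continuously to $[0,1]$: since $\delta > 0$ we have $0^{\delta}=0$ and so
\[
\lim_{t \to 0^+} \frac{F(t,0)}{t} \;=\; \frac{d}{dt}\bigl[(1+t)^{1+\delta} - t^{1+\delta}\bigr]\Big|_{t=0} \;=\; 1+\delta.
\]
Hence $F(\cdot,0)/t$ is continuous and strictly positive on the compact interval $[0,1]$, so it attains a positive minimum $c_1 = c_1(\delta) > 0$, giving $F(t,0) \ge c_1 t$ for all $t \in (0,1]$.

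Finally I would control $\partial_s F(t,s) = -(1+\delta)\bigl[(t+s)^\delta + (1+s)^\delta\bigr]$, which on $t,s \in [0,1]$ is bounded in absolute value by $c_2 := 2(1+\delta)\cdot 2^{\delta}$. Combining with the previous step, for $s \le \lambda t$ with $\lambda \le 1$,
\[
F(t,s) \;\ge\; F(t,0) - c_2 s \;\ge\; c_1 t - c_2 \lambda t \;=\; (c_1 - c_2 \lambda)\,t,
\]
so any $\lambda(\delta) < \min\{1,\,c_1/c_2\}$ works. There is no real obstacle here; the only subtle point is making sure the lower bound $F(t,0) \ge c_1 t$ is uniform in $t \in (0,1]$, which is exactly what the continuous extension at $t=0$ supplies.
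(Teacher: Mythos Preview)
Your argument is correct. The normalization, the quantitative lower bound $F(t,0)\ge c_1 t$ via the continuous extension of $F(t,0)/t$ to $[0,1]$, and the uniform bound on $\partial_s F$ all go through as written, and together give the result for any $\lambda<\min\{1,c_1/c_2\}$.

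Your route differs from the paper's. The paper divides by $(y+a)^{1+\delta}$, sets $q=x/(y+a)$, and observes that it suffices to treat the extremal case $a=\lambda x$; after that substitution the desired inequality reads $((1-\lambda)q+1)^{1+\delta}>((1+\lambda)q)^{1+\delta}+1$. Applying Bernoulli's inequality $(1+u)^{1+\delta}\ge 1+(1+\delta)u$ to the left side and using $(1+\lambda)q\le 1$ on the right reduces everything to the linear condition $(1+\delta)(1-\lambda)>1+\lambda$, yielding the explicit choice $\lambda<\delta/(2+\delta)$. So the paper's proof is purely algebraic and gives a concrete constant, whereas yours is an analytic compactness argument that is perhaps more robust (it would adapt easily to other power functions or perturbations) but does not produce an explicit $\lambda(\delta)$ without further work estimating $c_1$. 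For the purposes of the paper the explicit constant is not actually used downstream, so either approach suffices.
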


\begin{proof}
Dividing the desired inequality by $(y+a)^{1+\del}$, and setting
$a := \lam x$ and $q := x/(y + a)$, we see that it suffices to 
find $\lam$ so that
\[
\bigl( (1 - \lam)q + 1 \bigr)^{1+\del}
> \bigl( (1 + \lam)q \bigr)^{1+\del} + 1.
\]
Since $(1+a)^{1+\del} \ge 1 + (1+\del)a$ for $a \ge 0$, 
and since $(1 + \lam)q \le 1$, this can be further reduced to
the inequality
\[
(1 + \del)(1 - \lam) > 1 + \lam, 
\]
which holds for $\lam < \del/(2+\del)$.
\end{proof}

Let again $X$ be a $\CAT(0)$ space.
For $M, \del >0$ we consider all Lipschitz spheres $f\colon S^2 \to X$ such that
\begin{equation*}
\Fillvol(f)\geq M\cdot \area(f)^{1+\del}.
\label{eq:FVgeqdel}
\end{equation*}
For $\eps\geq 0$ we call such a sphere {\em $(M,\delta,\eps)$-minimal}\/ if
every Lipschitz sphere $f' \colon S^2 \to X$ with 
$\area(f')\leq\area (f)-\eps$ satisfies the reverse inequality
$\Fillvol(f')\leq M\cdot \area(f')^{1+\del}$.

This notion is related to the intrinsic isoperimetric profile of a sphere. We say that a Lipschitz sphere $f\colon S^2\to X$ is {\em intrinsially $(c,\theta)$-isoperimetric}, for some $c>0$ and $\theta \ge 0$, if every disc $D \sub S^2$ with $\area_f(D) \le \frac12\area(f)$ and $\area_f(D) \ge \theta$ satisfies 
$\area_f(D) \leq c\cdot\length_f(\d D)^2$.

\begin{Lem} \label{Lem:round-minimal} 
For every $\delta > 0$ and $\theta \ge 0$ there exist $c=c(\del)>0$ and $\eps(\del,\theta)\in [0,\theta]$ such that the following holds for every $M > 0$.
If $f\colon S^2\to X$ is an $(M,\delta,\eps)$-minimal
Lipschitz sphere, then $f$ is intrinsically $(c,\theta)$-isoperimetric.
\end{Lem}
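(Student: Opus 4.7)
The plan is to argue by contradiction: if $f$ failed to be intrinsically $(c,\theta)$-isoperimetric, there would be a disc $D\subset S^2$ with relatively small boundary, and cutting $S^2$ along $\partial D$ and plugging the hole with a Reshetnyak majorant would produce two Lipschitz spheres smaller than $f$, whose efficient fillings (given by minimality) could be glued into a filling of $f$ that contradicts the convexity estimate of Lemma~\ref{Lem:convex}.

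More precisely, with $\lambda=\lambda(\del)\in(0,1)$ from Lemma~\ref{Lem:convex}, I would set
\[
c:=\frac{1}{4\pi\lambda}, \qquad \eps:=(1-\lambda)\,\theta\in[0,\theta].
\]
Assume for contradiction that $f$ is $(M,\del,\eps)$-minimal but that some disc $D\subset S^2$ satisfies $\theta\le x:=\area_f(D)\le \tfrac12 A$ (with $A:=\area(f)$, and set $y:=A-x\ge x$) yet $x>cL^2$, where $L:=\length_f(\d D)$. Applying Reshetnyak's majorization (Proposition~\ref{Pro:reshetnyak}) to the closed rectifiable curve $f\circ\d D$ produces a Lipschitz disc $g$ spanning it with area
\[
a:=\area(g)\le \frac{L^{2}}{4\pi}<\frac{x}{4\pi c}=\lambda x.
\]

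Next I would construct two Lipschitz $2$-spheres $f_1$ and $f_2$ by gluing $g$ to $f|_D$ and to $f|_{S^2\sm D}$ along $\d D$, respectively, using Lemma~\ref{Lem:mon_ext} to parametrize each half by $B^2$. Their areas satisfy $\area(f_1)\le x+a$ and $\area(f_2)\le y+a$. Since $a<\lambda x$ and $x\ge\theta$,
\[
x-a>(1-\lambda)x\ge (1-\lambda)\theta=\eps,
\]
and similarly $y-a>\eps$, so both $\area(f_i)\le A-\eps$. By $(M,\del,\eps)$-minimality, each $f_i$ admits a Lipschitz $3$-ball filling $\bar f_i$ with $\vol(\bar f_i)\le M\area(f_i)^{1+\del}$. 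Gluing $\bar f_1$ and $\bar f_2$ along the common Lipschitz disc $g$ on their boundaries (since $B^3\cup_{\text{disc}}B^3$ is again a topological $3$-ball) yields a Lipschitz $3$-ball filling of $f$ of volume at most $M\bigl[(x+a)^{1+\del}+(y+a)^{1+\del}\bigr]$.

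Combining with $\Fillvol(f)\ge MA^{1+\del}=M(x+y)^{1+\del}$ gives
\[
(x+y)^{1+\del}\le (x+a)^{1+\del}+(y+a)^{1+\del},
\]
which directly contradicts Lemma~\ref{Lem:convex}, since $0<x\le y$ and $a\le\lambda x$. The main technical nuisance I foresee is the Lipschitz bookkeeping in Step~3---parametrizing the topological discs $D$ and $S^2\sm D$ by $B^2$ (done via Lemma~\ref{Lem:mon_ext} once one knows $\length_f(\d D)<\infty$, which is implicit in the inequality to be proved) and then realising the gluing of two $3$-balls along a Lipschitz disc as a Lipschitz map from $B^3$. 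All of these are standard once set up carefully, and the entire conceptual content is contained in the choice of $c=1/(4\pi\lambda(\del))$ so that the Reshetnyak filling is of size $a<\lambda x$, exactly matching the hypothesis of Lemma~\ref{Lem:convex}.
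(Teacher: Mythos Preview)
Your proposal is correct and follows essentially the same argument as the paper: the identical choice of constants $c = 1/(4\pi\lambda)$ and $\eps = (1-\lambda)\theta$, the same Reshetnyak cap of area $a \le \lambda x$, and the same contradiction via Lemma~\ref{Lem:convex}. The only difference concerns the Lipschitz bookkeeping you flag: instead of invoking Lemma~\ref{Lem:mon_ext} (which needs $\partial D$ rectifiable in $S^2$, not merely $\length_f(\partial D)<\infty$), the paper first reparametrizes $f$ via Proposition~\ref{Pro:pre-homotopy} so that the separating Jordan curve becomes piecewise geodesic, after which the two spheres and their ball-fillings glue without further fuss.
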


\begin{proof}
Given $\del > 0$ and $\theta \ge 0$,
let $\lam = \lam(\del) \in (0,1)$ be the constant from Lemma~\ref{Lem:convex} and put 
\[
c := \frac{1}{4\pi\lam}, \quad \eps: = (1-\lam)\,\theta.
\]
Let $f \colon S^2 \to X$ be an $(M,\del,\eps)$-minimal Lipschitz sphere and suppose, contrary to the assertion, 
that there exists a decomposition $S^2 = D_1 \cup D_2$ into two discs along a Jordan curve $\alpha \sub S^2$ such that 
\[
\theta \leq x := \area_f(D_1) \leq y := \area_f(D_2)
\]
and $\area_f(D_1) > c\cdot \length_f(\alpha)^2$. After reparametrizing $f$ using 
Proposition~\ref{Pro:pre-homotopy}, we can assume that $\alpha$ is piecewise geodesic. 
Then, by Proposition~\ref{Pro:reshetnyak} and the choice of $c$, we can fill the closed 
curve $f|_\alpha$ by a Lipschitz disc of area
\[
a \leq \frac{1}{4\pi}\cdot\length_f(\alpha)^2 \leq \lam\cdot\area_f(D_1) = \lam x
\]
in $X$. Since $\alpha$ is piecewise geodesic, this creates two Lipschitz $2$-spheres $f_1,f_2$ with
\[
\area(f) = x+y, \quad \area(f_1) = x+a, \quad \area(f_2) = y+a.
\]
Note that $x - a \ge (1 - \lam)\,x \ge (1 - \lam)\,\theta = \eps$ and hence
\[
\area(f_i)\leq \area (f)-x+a\leq\area(f)-\eps.
\]
Thus, by $(M,\delta,\eps)$-minimality of $f$ and
Lemma~\ref{Lem:convex}, we have  
\begin{align*}
\Fillvol(f) &\leq \Fillvol(f_1) + \Fillvol(f_2) \\
&\leq M\cdot (x+a)^{1+\delta} + M\cdot(y+a)^{1+\delta} \\ 
&< M\cdot (x+y)^{1+\delta}=M\cdot \area(f)^{1+\delta},  
\end{align*}
contradicting the assumption on $f$.
\end{proof}

We will now work towards the main result of this section, Proposition~\ref{Pro:triangulation}. We begin with a modified version of the classical Besicovitch inequality \cite{Gro_FRR}.
In the following we denote by $Q=[0,1]^2$ the unit square.

\begin{Lem}[Besicovitch] \label{Lem:besi} Let $X$ be a complete metric space and $\ph\colon Q\to X$ a Lipschitz disc. Suppose that the opposite sides of the square 
have intrinsic distance at least  $a$ and $b$, respectively.
Then $\area(\ph)\geq a\cdot b$.
\end{Lem}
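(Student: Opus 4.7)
Plan: The argument is the classical Besicovitch scheme, carried out at the level of the intrinsic pseudo-metric $d_\ph$. Let $L$, $R$, $B$, $T$ denote the left, right, bottom and top sides of $Q$, and set
\[
u(z) := d_\ph(z, L), \qquad v(z) := d_\ph(z, B).
\]
Both functions are $1$-Lipschitz with respect to $d_\ph$, and since $d_\ph(z, z') \le \Lip(\ph) \cdot \|z - z'\|$, they are Lipschitz with respect to the Euclidean metric on $Q$ as well. By hypothesis $u|_L \equiv 0$ and $u|_R \ge a$, and analogously $v|_B \equiv 0$ and $v|_T \ge b$.

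Assemble $\Phi := (u, v) \colon Q \to \mathbb{R}^2$. For any $(s, t) \in (0, a) \times (0, b)$, the sign patterns of $u - s$ and $v - t$ along $\d Q$ force the winding number of $\Phi|_{\d Q}$ around $(s, t)$ to equal one, so $\Phi(Q) \supset [0, a] \times [0, b]$ and in particular $\mathcal{H}^2(\Phi(Q)) \ge ab$.

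The technical heart is the pointwise bound $|\det D\Phi(z)| \le \Jac(|d\ph_z|)$ for a.e.\ $z \in Q$. At such a $z$, both $u$ and $v$ are Euclidean-differentiable (Rademacher) and the metric differential $|d\ph_z|$ exists. The estimate $d_\ph(z + tw, z) \le \int_0^{|t|} |d\ph_{z + sw}|(w)\,ds$, together with a standard Lebesgue density argument, yields $\limsup_{t \to 0} d_\ph(z + tw, z)/|t| \le |d\ph_z|(w)$ for a.e.\ $z$ and every $w \in \mathbb{R}^2$; combined with the $1$-Lipschitz property of $u, v$ with respect to $d_\ph$, this produces $|du_z(w)|, |dv_z(w)| \le |d\ph_z|(w)$. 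Hence $D\Phi(z)$ is a $1$-Lipschitz linear map $(\mathbb{R}^2, |d\ph_z|) \to (\mathbb{R}^2, \ell^\infty)$, and a comparison of Lebesgue volumes of the respective unit balls furnishes the Jacobian bound. Applying the Euclidean area formula to $\Phi$ and the area formula to $\ph$ then closes the estimate:
\[
ab \;\le\; \mathcal{H}^2(\Phi(Q)) \;\le\; \int_Q |\det D\Phi(z)|\, dz \;\le\; \int_Q \Jac(|d\ph_z|)\, dz \;=\; \area(\ph).
\]

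I expect the main obstacle to be the infinitesimal inequality $|du_z(w)| \le |d\ph_z|(w)$, which must transfer the $1$-Lipschitz bound from the intrinsic pseudo-metric $d_\ph$ to the metric differential of $\ph$ and needs to be argued at Lebesgue density points of $|d\ph|$ along linear segments. The topological winding argument and the linear-algebra volume comparison are routine once this inequality is in hand.
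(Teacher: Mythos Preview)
Your approach --- the direct Besicovitch scheme with the distance functions $u=d_\ph(\cdot,L)$, $v=d_\ph(\cdot,B)$ and a degree argument --- is genuinely different from the paper's proof, which instead passes to the intrinsic space $Z=(Q,d_\ph)$, identifies $\area(\ph)=\Hm^2(Z)$ via an approximation by bilipschitz maps and~\cite[Corollary~3.2]{LWint}, and then quotes the classical Besicovitch inequality for the metric square~$Z$. Your route is more self-contained; the paper's is shorter but leans on an external black box.

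There is, however, a gap in your Jacobian comparison. From $|du_z(w)|,\,|dv_z(w)|\le s(w)$ with $s:=|d\ph_z|$ you correctly obtain that $D\Phi(z)\colon(\R^2,s)\to(\R^2,\ell^\infty)$ is $1$-Lipschitz, so $D\Phi(z)(B_s)\subset[-1,1]^2$ and $|\det D\Phi(z)|\le 4/\lambda(B_s)$, where $B_s$ is the unit ball of $s$ and $\lambda$ denotes Lebesgue measure. But the paper's Jacobian is the Busemann--Hausdorff one: for a two-dimensional norm $s$ one has $\Jac(s)=\Hm^2_s([0,1]^2)=\pi/\lambda(B_s)$. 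Your unit-ball comparison therefore only yields $|\det D\Phi(z)|\le\tfrac{4}{\pi}\Jac(s)$, and the final chain gives $\area(\ph)\ge\tfrac{\pi}{4}\,ab$ rather than $ab$. The example $\ph=\id\colon[0,1]^2\to(\R^2,\ell^\infty)$, where $a=b=1$ but $\area(\ph)=\pi/4$, shows this loss is genuine; for arbitrary complete $X$ the lemma with constant~$1$ actually fails, and the paper's own appeal to Besicovitch for $\Hm^2(Z)$ runs into the same obstruction. What rescues both arguments in the only setting the paper needs is that for a $\CAT(0)$ target the seminorm $s=|d\ph_z|$ is almost everywhere induced by an inner product. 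Writing $s(w)=|Aw|$ for a linear map $A$, the bounds $|du_z(w)|,|dv_z(w)|\le|Aw|$ force $du_z=\xi^{\!\top}A$ and $dv_z=\eta^{\!\top}A$ with $|\xi|,|\eta|\le 1$, whence $|\det D\Phi(z)|=|\det(\xi,\eta)|\,|\det A|\le|\det A|=\Jac(s)$. Replace your $\ell^\infty$ step by this computation and the rest of your argument goes through with constant~$1$.
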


\proof
Let us first assume that $\ph$ is bilipschitz. Denote by $Z=(Q,d_\ph)$ the
associated intrinsic disc and by $\pi_\ph\colon Q\to Z$ the induced canonical 
projection which is also bilipschitz. By \cite[Corollary~3.2]{LWint}, the metric derivatives of $\ph$
and $\pi_\ph$ coincide almost everywhere. Thus, $\area(\ph)=\area(\pi_\ph)$.
Since $\ph$ is bilipschitz, the area formula implies $\area(\pi_\ph)=\Hm^2(Z)$.
Hence, we obtain the claim from  the classical Besicovitch inequality which shows 
$\Hm^2(Z)\geq a\cdot b$.
If $\ph$ is only Lipschitz, we consider for $\eps>0$ the auxiliary bilipschitz map
$\ph_\eps\colon Q\to X\times\R^3$ defined by $\ph_\eps(x)=(\ph(x),\eps x)$.
Note that $d_{\ph_\eps}\geq d_\ph$ and $\area(\ph_\eps)\to\area(\ph)$ for $\eps\to 0$.
This concludes the proof.
\qed

\medskip

The proof of Proposition~\ref{Pro:triangulation} also employs the following decomposition lemma for spheres analogous to~\cite[Proposition~2.3]{Pap-CC}. 
Since our setting is slightly different, we provide a complete argument.

Similarly as for spheres, we say that a Lipschitz disc $f \colon D \to X$ is {\em intrinsically $(c,\theta)$-isoperimetric} if every subdisc
$D' \sub D$ with $\area_f(D') \ge \theta$ 
satisfies $\area_f(D') \leq c\cdot\length_f(\d D')^2$.

\begin{Lem} \label{Lem:cut} Let $f\colon S^2\to X$ be a Lipschitz $2$-sphere. 
Then there exists a simple closed curve $\alpha \sub S^2$ such that the
two discs $H_1,H_2$ with $H_1 \cup H_2 = S^2$ and $H_1 \cap H_2 = \alpha$
satisfy
\[
\area_f(H_i) \ge \frac14 \area(f) \;\;(i = 1,2), \quad
\length_f(\alpha) \le 4\sqrt{\area(f)}.
\]
Moreover, if $f$ is intrinsically $(c,\theta)$-isoperimetric for some $c > 0$ and $\theta \in [0,\frac14\area(f)]$,
then both $f|_{H_1}$ and $f|_{H_2}$ are intrinsically $(3c,\theta)$-isoperimetric.
\end{Lem}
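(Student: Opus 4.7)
The proof splits into the existence of the cut $\alpha$ and the ``moreover'' clause on intrinsic isoperimetric inequalities. I would handle the latter first, as it is essentially formal.

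\emph{Moreover clause.} Assume $f$ is intrinsically $(c,\theta)$-isoperimetric with $\theta\in[0,\tfrac14\area(f)]$, and let $D\sub H_i$ be a subdisc with $\area_f(D)\ge\theta$. Note that $D$ is automatically a subdisc of $S^2$, and $\area_f(H_i)\le 3\area(f)/4$ by the first assertion. If $\area_f(D)\le\area(f)/2$, applying the hypothesis directly to $D$ gives $\area_f(D)\le c\cdot\length_f(\d D)^2\le 3c\cdot\length_f(\d D)^2$. Otherwise $\area_f(D)\in(\area(f)/2,3\area(f)/4]$, and the complementary subdisc $D':=\overline{S^2\setminus D}$ satisfies $\d D'=\d D$ and
\[
\area_f(D')=\area(f)-\area_f(D)\in[\area(f)/4,\area(f)/2),
\]
in particular $\area_f(D')\ge\theta$. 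The hypothesis applied to $D'$ yields $\area_f(D')\le c\cdot\length_f(\d D)^2$, and combining with $\area_f(D)\le 3\area(f)/4\le 3\,\area_f(D')$ gives $\area_f(D)\le 3c\cdot\length_f(\d D)^2$.

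\emph{Existence of $\alpha$.} The strategy is to extract a short balanced level set of an intrinsic distance function. Fix $p\in S^2$ and set $h(x):=d_f(p,x)$, which is $1$-Lipschitz with respect to $d_f$. Let $A(t):=\area_f(\{h\le t\})$, a non-decreasing continuous function from $0$ to $\area(f)$. The coarea inequality
\[
\int_a^b\length_f(h^{-1}(t))\,dt\le A(b)-A(a)
\]
together with a pigeonhole/averaging argument on a suitable interval of $t$ produces a level $t^*$ with $\length_f(h^{-1}(t^*))\le 4\sqrt{\area(f)}$ and $A(t^*)$ in a balanced range near $\area(f)/2$. By the $S^2$-analogue of Proposition~\ref{Pro:reg_fiber}, for almost every $t^*$ the level set $h^{-1}(t^*)$ decomposes, up to an $\Hm^1$-null set, as a disjoint finite union of rectifiable Jordan curves $J_1,\ldots,J_n$. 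The Jordan curves together with the components of $S^2\setminus h^{-1}(t^*)$ form a weighted tree $T$ (vertices = components, weights = component $f$-areas, edges = Jordan curves). A tree-bisection argument then yields an edge $J_k$ whose removal splits $T$ into two subtrees of total weight in $[\area(f)/4,3\area(f)/4]$. Setting $\alpha:=J_k$, the length bound is inherited from $\length_f(h^{-1}(t^*))$.

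\emph{Main obstacle.} The delicate point is guaranteeing both the length bound and the area balance simultaneously, since $A$ may grow rapidly through the interval $[\area(f)/4,3\area(f)/4]$, in which case naive coarea pigeonhole on that short interval fails. Following the technique of Papasoglou~\cite{Pap-CC}, I would either refine the basepoint $p$ (placing it so that $A$ spreads out), or, when $A$ jumps, recurse inside the dominant component of a level set whose sublevel area sits just below $\area(f)/4$: even though its component containing $p$ is small, the complementary $h>t^*$ region splits via $h^{-1}(t^*)$ into pieces among which one has area in the balanced range, and its bounding Jordan curve serves as $\alpha$. A cleaner variational alternative is to minimize $\length_f$ over Jordan curves separating $S^2$ into two pieces of $f$-area at least $\area(f)/4$ each, and to bound this minimum above by $4\sqrt{\area(f)}$ via a Besicovitch-type narrow-annulus inequality applied to the pseudo-metric space $(S^2,d_f)$.
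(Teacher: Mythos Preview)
Your ``moreover'' clause is correct and matches the paper's argument verbatim.

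For the existence of $\alpha$, the paper takes precisely the variational route you mention at the end, not the level-set approach. It considers the family $\mathcal{F}$ of Jordan curves in $S^2$ splitting it into two discs each of $f$-area at least $\tfrac14\area(f)$, takes an $\eps$-minimizer $\alpha\in\mathcal{F}$, and labels the two discs so that $\area_f(H_1)\ge\area_f(H_2)$. The key observation is that for any arc $c\sub H_1$ joining two points of $\alpha$, with $\bar c$ the shorter subarc of $\alpha$ between the same endpoints, one has $\length_f(c)\ge\length_f(\bar c)-\eps$: replacing one of the two subarcs of $\alpha$ by $c$ produces another curve in $\mathcal{F}$ (one of the two replacements is balanced since one of the two pieces of $H_1$ cut off by $c$ has area $\ge\tfrac12\area_f(H_1)\ge\tfrac14\area(f)$), and $\eps$-minimality gives the bound. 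Splitting $\alpha$ into four arcs of equal $f$-length, opposite arcs are then at $d_\ph$-distance at least $\tfrac14\length_f(\alpha)-\eps$ inside $H_1$, and Lemma~\ref{Lem:besi} applied to the disc $H_1$ (not an annulus) yields $\area_f(H_1)\ge\bigl(\tfrac14\length_f(\alpha)-\eps\bigr)^2$. Since $\area_f(H_1)\le\tfrac34\area(f)$, choosing $\eps<(1-\tfrac{\sqrt3}{2})\sqrt{\area(f)}$ gives $\length_f(\alpha)<4\sqrt{\area(f)}$.

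Your level-set/coarea/tree-bisection approach is genuinely different, and the obstacle you identify is real: the sublevel area $A(t)$ can jump through the balanced range on an interval too short for coarea to yield a short fiber. Your suggested fixes (refining the basepoint, recursing on the dominant component) are plausible but left as sketches. The variational argument avoids the difficulty altogether, at the price of invoking Besicovitch; what it buys is a clean one-step bound with no pigeonhole or recursion.
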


\proof
Since $f$ is Lipschitz, there exists a Jordan curve $\al$ in $S^2$ such that the resulting discs $H_1$ and $H_2$ both have $f$-area at least $\frac14 \area(f)$. Denote by $\mathcal{F}$
the family of all such Jordan curves in $S^2$ and let $\bar L$
denote the infimal $f$-length of curves in $\mathcal{F}$. Let $\al\in\mathcal{F}$
be $\eps$-minimal, i.e.~$\length_f(\al)<\bar L+\eps$. Denote by $H_1$ and $H_2$
the discs associated to $\al$ and assume that $\area_f(H_1)\geq\area_f(H_2)$.
Note that if $c$ is an arc in $H_1$ joining two points on $\al$, and $\bar c$
is the shortest subarc of $\al$ joining the same points, then $\length_f(c)\geq\length_f(\bar c)-\eps$.
This shows that if we decompose $\al$ into four arcs of equal length, then opposite arcs 
have distance in $H_1$ at least $\frac14\length_f(\al)-\eps$. By Lemma~\ref{Lem:besi}, we conclude that
\[
\area_f(H_1)\geq\Bigl(\frac14\length_f(\al)-\eps\Bigr)^2.
\]
Choosing $\eps<\bigl(1-\frac{\sqrt{3}}{2}\bigr)\sqrt{\area(f)}$, we obtain the desired inequalities.

Suppose now that $f$ is intrinsically $(c,\theta)$-isoperimetric with $c > 0$ and $\theta \in [0,\frac14 \area(f)]$, and $D \sub H_i$ is a subdisc. We only need to consider the case 
that $\area_f(D) > \frac12\area(f)$. Then
\[
\area_f(D) \le \frac{3}{4} \area(f) \le 3 \area_f(S^2 \sm D) \le 3c \cdot \length_f(\d D)^2
\]
as claimed.
\qed

\medskip

Now we come to the main goal of this section. The result is inspired by~\cite{Pap-AC} (see p.~803 therein).

\begin{Pro} \label{Pro:triangulation}
Let $X$ be a proper $\CAT(0)$ space. 
For $\delta,\eps \in (0,1]$ and $M>0$, let $f\colon S^2\to X$ be an $(M,\delta,\eps)$-minimal Lipschitz $2$-sphere with $\area(f)\geq 4$. 
Suppose further that $f$ is intrinsically $(c,\theta)$-isoperimetric for some 
$\theta\in[0,1]$ and $c \ge 1$. 
Then we find for every $s>0$ with $\frac{\theta}{3c} \le s^2 \le \frac{1}{8c}\area(f)$ a Lipschitz $2$-sphere $f'\colon S^2\to X$ with $\area(f')\leq\area(f)$ and a Lipschitz homotopy $h$ from $f$ to $f'$ with
\[
\vol(h)\leq 12 M\cdot c\cdot s^{2\delta}\cdot\area(f);
\]
moreover, $f'$ admits a  disc decomposition $S^2 = D_1 \cup \ldots \cup D_n$, induced by a finite Lipschitz graph $\Ga\subset S^2$, such that
\begin{enumerate}
\item[\rm (1)] $n \le 14\cdot\area(f)/s^2$,
\item[\rm (2)] $\length_{f'}(\d D_i) \le 34 s$,  
\item[\rm (3)] $\area_{f'}(D_i) \le 3c \cdot (34 s)^2$ 
\end{enumerate}  
for $i = 1,\ldots,n$.
In particular, $\length_{f'}(\Ga)\leq 238 \cdot\area(f)/s$.
\end{Pro}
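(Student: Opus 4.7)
My plan follows Papasoglu's iterative cutting strategy from~\cite{Pap-AC}, adapted to the Lipschitz setting, and splits into three steps.

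\emph{Iterated binary cutting.} Apply Lemma~\ref{Lem:cut} repeatedly to $(S^2,f)$. At each stage I take an auxiliary sphere of $f$-area at least a fixed multiple of $s^2$ (initially $f$ itself), and cut it along the Jordan curve provided by Lemma~\ref{Lem:cut}; each resulting hemisphere inherits the intrinsic $(3c,\theta)$-isoperimetric property. To iterate on a hemisphere (a disc), I close it into a new sphere by gluing a Reshetnyak fill of $f|_\alpha$ (Proposition~\ref{Pro:reshetnyak}) along its boundary, and apply Lemma~\ref{Lem:cut} again. After finitely many steps every leaf piece $P$ has $\area_f(P)\le C_0 s^2$ for some absolute constant $C_0$.

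\emph{Refinement via Corollary~\ref{Cor:sausage}.} For each leaf piece $P$, the intrinsic isoperimetric property combined with the coarea formula bounds the $d_f$-radius. If $A(r)$ denotes the $f$-area of the intrinsic $d_f$-ball of radius $r$ about a point $x\in P$, then iso and coarea yield $A'(r)\ge\sqrt{A(r)/(3c)}$ whenever $A(r)\ge\theta$. Integrating gives
\[
\sqrt{A(r)}-\sqrt{\theta}\ \ge\ \frac{r-r_0}{2\sqrt{3c}},
\]
where $A(r_0)=\theta$. Combined with the area bound on $P$ and the hypothesis $\theta/(3c)\le s^2$, this yields $\rad_f(P)\le 2s$. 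Applying Corollary~\ref{Cor:sausage} with $r=2s$ on each $P$ produces a polygonal disc decomposition whose cells $D_i$ satisfy $\length_f(\partial D_i)\le 17r=34s$. Assembling over all leaves gives the Lipschitz graph $\Gamma\subset S^2$ and the bound $n\le 14\area(f)/s^2$.

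\emph{Definition of $f'$ and homotopy bound.} For each cell $D_i$, let $v_i\colon B^2\to X$ be a Reshetnyak fill of $f|_{\partial D_i}$, so $\area(v_i)\le (34s)^2/(4\pi)$. Set $f'|_{D_i}=v_i$ if this strictly decreases the cell $f$-area, and $f'|_{D_i}=f|_{D_i}$ otherwise; the pointwise choice immediately gives $\area(f')\le\area(f)$, while isoperimetry forces $\area_{f'}(D_i)\le 3c(34s)^2$ on unchanged cells, and the Reshetnyak bound handles the replaced ones. Since $f'=f$ on $\partial D_i$, the bound $\length_{f'}(\partial D_i)\le 34s$ is automatic. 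The homotopy $h$ is assembled from Lipschitz $3$-balls filling the auxiliary spheres $f|_{D_i}\cup v_i$; each such sphere has area bounded by a constant multiple of $cs^2$, which together with $s^2\le\area(f)/(8c)$ and $\area(f)\ge 4$ lies below $\area(f)-\eps$, so $(M,\delta,\eps)$-minimality bounds each filling volume by $M\cdot\area(f|_{D_i}\cup v_i)^{1+\delta}$. Summing over the $n$ cells yields $\vol(h)\le 12Mcs^{2\delta}\area(f)$ after absorbing numerical constants.

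\emph{Main obstacle.} The radius bound in the refinement step is the most delicate point: isoperimetry provides no information on balls of $f$-area below $\theta$, so the ``pre-$\theta$'' regime must be absorbed into the target radius $2s$, and the hypothesis $\theta/(3c)\le s^2$ is precisely what makes this possible. A secondary difficulty lies in the area bookkeeping of the iterated cutting step, where each inserted Reshetnyak fill slightly inflates the current auxiliary sphere area; one must track these inflations and verify they stay uniformly below the $(M,\delta,\eps)$-minimality threshold $\area(f)-\eps$ throughout the cutting tree.
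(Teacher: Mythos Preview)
There are two genuine gaps, and the paper circumvents both by a rather different route.

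\textbf{Iterated cutting does not yield a disc decomposition of $S^2$.} Lemma~\ref{Lem:cut} applies to spheres, not discs. When you close a hemisphere $H$ with a Reshetnyak cap $D_\alpha$ and reapply the lemma to the auxiliary sphere $H\cup D_\alpha$, the new cutting curve may lie partly or entirely in $D_\alpha$; indeed, since $\area(D_\alpha)\le\tfrac{4}{\pi}\area(f)$ while $\area_f(H)$ may be as small as $\tfrac14\area(f)$, the cap can carry most of the auxiliary area and the cut need not touch $H$ at all. Even when the cut stays inside $H$, it splits $H$ into a disc and an annulus, not two discs. Thus the leaves of your binary tree are not discs in the original $S^2$, no graph $\Gamma\subset S^2$ is produced, and Corollary~\ref{Cor:sausage} cannot be invoked on them. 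The paper applies Lemma~\ref{Lem:cut} exactly once and then works on each hemisphere separately.

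\textbf{The radius bound via isoperimetry and coarea is not justified.} Your inequality $A'(r)\ge\sqrt{A(r)/(3c)}$ presumes that the intrinsic $d_f$-ball about $x$ is a disc, since the isoperimetric hypothesis concerns discs only. In the pseudo-metric $d_f$ this fails: sublevel sets of $d_f(x,\cdot)$ may be disconnected or multiply connected, and passing to the disc hull of the component of $x$ does not help, because its area can jump and is not controlled by coarea. The paper bypasses this entirely: on each hemisphere it slices along level sets of the distance to the boundary, so the strips between consecutive levels have $\phi$-radius at most $s$ by construction; the small Jordan domains in the level sets (those with boundary of $\phi$-length $<s$) are then replaced by minimal discs of radius $\le s/(2\pi)$, yielding pieces of radius $\le 2s$ without any appeal to isoperimetry. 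The intrinsic isoperimetric property is used only at the very end, to deduce~(3) from~(2). As a side effect, since the homotopy $h$ replaces only these small domains---whose total $f$-area is at most $\area(f)$---one obtains the constant $12Mc$; your Step~3 replaces every cell $D_i$, and the resulting sum is larger by several orders of magnitude.
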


\begin{proof}
Let $\alpha$ and $H_1, H_2$ be given as in Lemma~\ref{Lem:cut}.
Then
\[
\length_f(\d H_i) \leq 4 \sqrt{\area(f)}\le 8 \sqrt{\area_f(H_i)}, \quad i=1,2.
\]
We will provide the necessary structure by treating the discs  $f|_{H_i}$, $i=1,2$, individually.
Let $\ph \colon D \to X$ be either one of these discs.

Consider the Lipschitz functions $g\colon D\to\R$ and
$g_s \colon D \to \R/s\Z$ defined by $g(x) := d_\ph(x,\d D)$ 
and $g_s(x) := g(x) \bmod s$.
By Proposition~\ref{Pro:reg_fiber}, we find $s_1\in(\frac{1}{2}s,s)$ such that the $s_1$-fiber satisfies $g_s^{-1}(s_1)=\hat\Pi\cup N$
where $\hat\Pi$ is a disjoint union of Jordan curves 
$\hat\Pi = \bigcup_{j \in J} \alpha_j$, $J \sub \N$, 
and $N$ is $\mathcal H^1$-negligible. Moreover,
by the coarea inequality, we can choose $s$ such that 
\[
\length_\ph(\hat\Pi)\leq\frac{2\area(\ph)}{s}.
\] 
We let $\Om_j\subset D$ denote the Jordan domain 
of $\alpha_j$, and we define $J_s$ as the set of all $j\in J$
such that $\alpha_j$ does not lie in the closure of a domain $\Om_k$ with $\length_\ph(\alpha_k)<s$.
We assume $J_s=\{1,\ldots,m\}$ and set
\[
\Pi=\bigcup_{j \in J_s} \alpha_j.
\]

In particular, for each $j\in J_s$ the curve $\alpha_j$ itself has 
$\length_\ph(\alpha_j)\geq s$.
Hence, 
\[
m\leq \frac{2\area(\ph)}{s^2}.
\]
Define $s_0=0$ and $s_i=s_1+(i-1)s$ for $i\geq 2$. 
We have $g_s^{-1}(s_1)=\bigcup_{i=1}^\infty g^{-1}(s_i)$. 
The regions $g^{-1}[s_{i-1},s_i]$ have $\ph$-radius at most $s$.

By Proposition~\ref{Pro:pre-homotopy},
we can deform  $\Pi$ to be piecewise geodesic without changing its $\ph$-length, at the cost of precomposing $f$ with a monotone Lipschitz map that is uniformly close to the identity map on $S^2$. 
This deformation  requires a Lipschitz homotopy of zero volume.
In particular, the resulting Lipschitz sphere is still intrinsically $(c,\theta)$-isoperimetric and $(M,\delta,\eps)$-minimal.
Hence, we can assume that $\Pi$ itself consists of piecewise geodesic Jordan curves.

We now define $\ph'\colon D\to X$ such that $\ph'=\ph$
on $D\setminus\bigcup_{j \in J \sm J_s} \Om_j$ and $\ph'|_{\Om_j}$ is a minimal disc if $j \not\in J_s$ and $\Om_j$ is not contained in a larger Jordan domain 
$\Om_k$ with $k \not\in J_s$. In particular, for such a minimal disc $\ph'|_{\Om_j}$ we have  $\area_{\ph'}(\Om_j)\leq\frac{1}{4\pi}s^2$ and 
hence $\rad_{\ph'}(\Om_j)\leq\frac{1}{2\pi}s$ 
by Proposition~\ref{Pro:reshetnyak} and~\cite[Lemma~51]{Sta}. 
We conclude that the $\ph'$-radii of the components of  $D\setminus \Pi$ are bounded by $\frac{2\pi+1}{2\pi}s \le 2s$.

The closure of each component of $D\setminus \Pi$
is a disc with holes. By Lemma~\ref{Lem:2nd_path},
after a small deformation, we may assume that  there are $m$ pairs of 
disjoint piecewise geodesic arcs $\beta_j^\pm$ in $D$ of $\ph'$-length at most $2s$ such that the graph $\tilde\Ga=\Pi\cup\bigcup_{j=1}^m(\beta_j^-\cup\beta_j^+)$
induces a disc decomposition of $D$ into discs $D_l$
of $\ph'$-radius at most $2s$.

\begin{figure}[ht]
    \centering
    \includegraphics[scale=0.35,trim={0cm 0cm 3cm 2.5cm},clip]{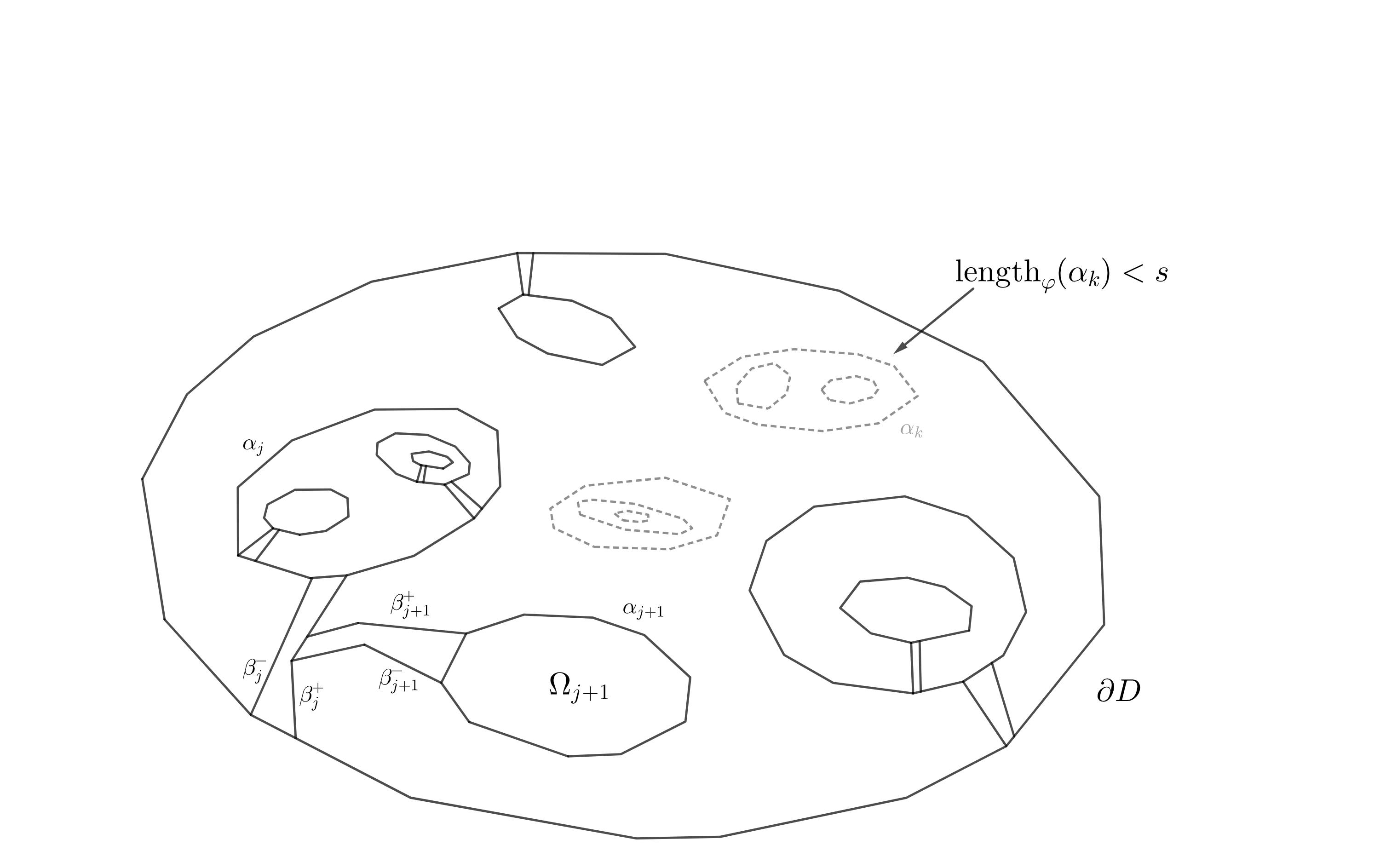}
    \caption{Illustration of the decomposition process.}
    \label{fig:decom_pro}
\end{figure}

Now we apply Corollary~\ref{Cor:sausage} with $r=2s$ to each of those discs.
Thus, either $\length_\ph(\d D_l)< 2s$ and we set $O_l:=D_l$, or $\length_\ph(\d D_l)\geq 2s$ and we obtain a  decomposition into  discs 
\[D_l=O_{1l}\cup\ldots\cup O_{pl}\]
induced by a graph $\Ga_l\subset D_l$ with $p\leq \frac{1}{2s}\length_{\ph'}(\d D_l)$ and $\length_{\ph'}(\d O_{il})\leq 34s$.
Renumbering, we obtain a disc decomposition
\[D=O_1\cup\ldots\cup O_{\tilde n}\]
induced by a graph $\hat\Ga=\tilde\Ga\cup\bigcup_l \Ga_l$.
Since there are $2m+1$ discs $D_l$, and since $\area(\ph) \ge \area(f)/4 \ge 2s^2$, we have
\begin{align*}
\tilde n &\leq\frac{1}{2s}\cdot\sum_l \length_\ph(\hat\d D_l)+(2m+1)\\
&\leq \frac{1}{s}\Bigl(\length_{\ph}(\Pi)+m\cdot 2s+\frac{1}{2}\cdot\length_\ph(\d D)\Bigr)+(2m+1)\\
&\leq \frac{1}{s}\left(\frac{2\area(\ph)}{s}+\frac{4\area(\ph)}{s}+4\sqrt{\area(\ph)}\right)+\frac{5\area(\ph)}{s^2}\\
&\leq \frac{14 \area(\ph)}{s^2}.
\end{align*}
Hence, by applying the above construction to both discs $f|_{H_i}$, $i=1,2$, we obtain a disc decomposition 
\[ 
S^2=D_1\cup\ldots D_n
\]
induced by $\Ga=\hat\Ga_1\cup\al\cup\hat\Ga_2$, where
$\hat\Ga_i$ denotes the graph constructed for $f|_{H_i}$.
(If it happens that $\Ga = \al$, then we introduce an arbitrary vertex 
on $\alpha$, so that $\Ga$ is a graph.)
We have already established properties~(1) and~(2). The remaining property~(3)
follows because $\theta \le \area(f)/4$, and therefore 
both $f|_{H_1}$ and $f|_{H_2}$ are intrinsically $(3c,\theta)$-isoperimetric 
by Lemma~\ref{Lem:cut}.

It remains to construct the homotopy $h$. Again, we consider $f|_{H_1}$ and $f|_{H_2}$
separately and continue with the above setup. 
For each of the minimal discs $\ph'|_{\Om_j}$ in the definition of $\ph'$, 
the union with $\ph|_{\Om_j}$
defines a Lipschitz $2$-sphere $\sig_j$ with 
\[
\area(\sig_j)\leq 2\area_\ph(\Om_j)\leq 6c s^2\le\frac{3}{4}\cdot\area(f)\leq\area(f)-\eps\]
by the properties of $f$ and the choice of $s$. Thus, by assumption, we have
\[
\Fillvol(\sig_j)\leq M\cdot\area(\sig_j)^{1+\delta}.
\]
Filling all spheres $\sig_j$, we obtain $h$ and the bound
\begin{align*}
\vol(h)&\leq M\cdot \sum_j \area(\sig_j)^{1+\delta}
\leq M\cdot \sum_j (2\cdot\area_\ph(\Om_j))^{1+\delta} \\
&\leq 4M\cdot \sum_j \area_\ph(\Om_j)\cdot(3c s^2)^{\delta}
\leq 12M\cdot c\cdot s^{2\delta}\cdot \area(\ph).
\end{align*}
Note that $h$ keeps the boundary of $\ph$ fixed. Applying the construction to both 
$f|_{H_1}$ and $f|_{H_2}$, we get the desired homotopy of $f$.
This completes the proof.
\end{proof}


\section{Proof of main theorem}\label{Sec:proof}

In this section we will prove Theorem~\ref{main_1} and Corollary~\ref{Cor:filling_surfaces}.

By definition, the asymptotic rank of a $\CAT(0)$ space $X$ equals the supremum of all $n$ for which there exist sequences of positive numbers $\lam_k \to \infty$, $\eps_k \to 0$, and maps $\phi_k$ from the Euclidean unit ball $B^{n} \sub \R^n$ into $X$ with 
\[
\bigl| \lam_k^{-1} d(\phi_k(y),\phi_k(y')) - \|y-y'\| \bigr| \le \eps_k
\]
for all $y,y' \in B^n$. This means that the sets $\phi_k(B^n)$, equipped with (the restrictions of) the rescaled metrics $\lam_k^{-1}d$, converge in the Gromov--Hausdorff topology to $B^n$.

Recall further that $\om_n$ denotes the Lebesgue measure of $B^n$. We let 
\[
  \ic_n := \frac{\vol_n(B^n)}{\vol_{n-1}(S^{n-1})^{n/(n-1)}}
  = \frac{\om_n}{(n\,\om_{n})^{n/(n-1)}} = \frac{1}{(n^n \om_n)^{1/(n-1)}}
\]
denote the constant in the optimal isoperimetric inequality in $\R^n$. In particular $\ic_2 = 1/(4\pi)$ and $\ic_3 = 1/(6\sqrt{\pi})$.

We start with the proof of the implication $(1) \Rightarrow (3)$ in Theorem~\ref{main_1},
which holds more generally in all dimensions. This uses similar arguments 
as in~\cite[Sect.~4]{Wen-EII}.

\begin{Pro} \label{Pro:AR}
Let $X$ be a proper $\CAT(0)$ space.
Suppose that there exist constants $c < \ic_{n+1}$ and $v>0$ such that every Lipschitz $n$-sphere $S \sub X$
with $\vol_n(S) > v$ bounds a Lipschitz $(n+1)$-ball $B \sub X$ with $\vol_{n+1}(B)\leq c\cdot \vol_n(S)^{(n+1)/n}$.
Then the asymptotic rank of\/ $X$ is at most~$n$.
\end{Pro}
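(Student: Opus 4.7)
The plan is to argue by contradiction and assume that the asymptotic rank of $X$ is at least $n+1$. By definition, this yields sequences $\lam_k\to\infty$, $\eps_k\to 0$, and maps $\phi_k\colon B^{n+1}\to X$ satisfying $|\lam_k^{-1}d(\phi_k(y),\phi_k(y'))-\|y-y'\||\le\eps_k$ for all $y,y'\in B^{n+1}$. The strategy is to produce, for $k$ large, a nearly $\lam_k$-isometric Lipschitz embedding of $B^{n+1}$ whose boundary $n$-sphere admits, by the hypothesis, a filling whose volume is too small to accommodate the near-Euclidean $(n+1)$-ball of the embedding, contradicting $c<\ic_{n+1}$.

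As a first step I would Lipschitz-approximate $\phi_k$. Choose a triangulation $T_k$ of $B^{n+1}$ of mesh $\del_k$ with $\eps_k\ll\del_k\to 0$, and define $\psi_k\colon B^{n+1}\to X$ simplex-wise by canonical $\CAT(0)$ barycentric (Karcher) interpolation from the vertex values $\phi_k|_{T_k^{(0)}}$. Using convexity of distances in $X$, one verifies the inherited near-isometry
\[
|\lam_k^{-1}d(\psi_k(x),\psi_k(y))-\|x-y\||\le\eps_k'
\]
with $\eps_k'=O(\del_k)\to 0$. In particular the metric differential $|d\psi_{k,x}|$ is almost everywhere close to $\lam_k$ times the Euclidean norm, hence
\[
\vol_n(\psi_k|_{S^n})=(1+o(1))\lam_k^n\,(n+1)\om_{n+1}.
\]
For $k$ large this exceeds $v$, so the hypothesis yields Lipschitz fillings $\bar f_k\colon B^{n+1}\to X$ of $\psi_k|_{S^n}$ with
\[
\vol_{n+1}(\bar f_k)\le c\,(1+o(1))\lam_k^{n+1}\,((n+1)\om_{n+1})^{(n+1)/n}.
\]

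Next I would produce a matching lower bound by projecting the fillings back to $\R^{n+1}$. Fix a $\del_k$-net $N_k\subset B^{n+1}$; the assignment $\psi_k(y)\mapsto y$ is well-defined on $\psi_k(N_k)\subset X$ and is $(1+o(1))\lam_k^{-1}$-Lipschitz by the near-isometry of $\psi_k$ together with $\del_k\gg\eps_k$. A Kirszbraun-type extension theorem for Lipschitz maps from $\CAT(0)$ spaces into Hilbert targets then provides a map $\pi_k\colon X\to\R^{n+1}$ with the same Lipschitz constant. The composition $g_k:=\pi_k\circ\bar f_k\colon B^{n+1}\to\R^{n+1}$ agrees on $S^n$ with $\pi_k\circ\psi_k|_{S^n}$, which a direct estimate shows is $O(\del_k)$-close to the inclusion $S^n\hookrightarrow\R^{n+1}$. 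A Brouwer degree argument then shows $g_k(B^{n+1})$ contains the concentric Euclidean ball of radius $1-o(1)$, so the area formula gives
\[
\vol_{n+1}(\bar f_k)\ge\Lip(\pi_k)^{-(n+1)}\vol_{n+1}(g_k)\ge(1-o(1))\lam_k^{n+1}\om_{n+1}.
\]
Comparing the two bounds on $\vol_{n+1}(\bar f_k)$ and letting $k\to\infty$ forces $\om_{n+1}\le c\,((n+1)\om_{n+1})^{(n+1)/n}$, i.e.\ $c\ge\ic_{n+1}$, contradicting the hypothesis.

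The main obstacle I expect is the Lipschitz-approximation step: ensuring that $\psi_k$ inherits sharp near-isometry constants from $\phi_k$, without any universal multiplicative loss in the Lipschitz constant, so that the leading-order constants $\lam_k^n$ and $\lam_k^{-1}$ survive in $\vol_n(\psi_k|_{S^n})$ and $\Lip(\pi_k)$ respectively. I would handle this via standard Karcher-mean estimates in $\CAT(0)$, exploiting convexity of the distance function to propagate the vertex-level bound through each simplex with an additive error controlled by the mesh $\del_k$. Everything else reduces to the Kirszbraun extension, the area formula, and Brouwer degree.
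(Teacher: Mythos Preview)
Your overall strategy---produce a near-isometric copy of $B^{n+1}$ in $X$, fill its boundary sphere cheaply by hypothesis, and then obtain a contradictory lower bound on the filling volume via a retraction to $\R^{n+1}$---is the same as the paper's. But the step you flag as the ``main obstacle'' (the forward Lipschitz extension) is not the real problem: the generalized Kirszbraun theorem of Lang--Schroeder applies directly, since the source $\R^{n+1}$ has Alexandrov curvature $\ge 0$ and the target $X$ is $\CAT(0)$, and gives a $(1+\eps_k)\lam_k$-Lipschitz extension agreeing with $\phi_k$ on the net---no Karcher means or additive mesh-error analysis needed. This is exactly what the paper does.

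The genuine gap is the \emph{backward} step, your map $\pi_k\colon X\to\R^{n+1}$. You invoke ``a Kirszbraun-type extension theorem for Lipschitz maps from $\CAT(0)$ spaces into Hilbert targets'', but no such theorem exists with the same Lipschitz constant: Lang--Schroeder requires a \emph{lower} curvature bound on the source, and $\CAT(0)$ provides only an upper bound. A tripod already gives a counterexample: if $X$ is the metric tree with center $o$ and leaves $a,b,c$ at distance $1$ from $o$, the isometry sending $a,b,c$ to the vertices of an equilateral triangle of side $2$ in $\R^2$ cannot be extended $1$-Lipschitz to $o$, since the circumradius $2/\sqrt{3}>1$. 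Coordinate-wise McShane--Whitney extension costs a factor $\sqrt{n+1}$ and destroys the sharp constant $\ic_{n+1}$, so the degree argument no longer yields a contradiction.

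The paper circumvents this by passing to a limit before retracting. Working with integral currents, it takes mass-minimizing fillings $V_k$ of the cycles $S_k$, uses Gromov compactness to embed everything in a single compact space $Z$, and extracts weak limits $V_k\to V$ and $\ph_k\to\iota$, where $\iota\colon B^{n+1}\to Z$ is now an \emph{exact} isometric embedding. The limit set embeds into an ultralimit $X_\om$, which is again $\CAT(0)$, and there $\iota(B^{n+1})$ is a closed convex subset. The $1$-Lipschitz nearest-point retraction onto this convex set then gives $\M(T)\le\M(V)<\om_{n+1}$ for $T=\iota_\#\bb{B^{n+1}}$, the desired contradiction. The passage to the limit is precisely what turns the merely near-isometric image (onto which no good retraction exists) into a genuinely convex one.
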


\begin{proof}
Suppose for contradiction that the asymptotic rank of $X$ is at least $n+1$.
Then there exists a sequence of $(1 + \eps_k)$-bi-Lipschitz embeddings
\[
\phi_k \colon B^{n+1} \cap (\eps_k\Z)^{n+1} \to X_k := (X,\lam_k^{-1} d),
\]
where $\lam_k \to \infty$ and $\eps_k \to 0$. By the generalized Kirszbraun theorem~\cite{LS_kirszbraun},
every $\phi_k$ can be extended to a $(1 + \eps_k)$-Lipschitz map
$\ph_k \colon B^{n+1} \to X_k$. Define $S_k := (\ph_k)_\#\,\d \bb{B^{n+1}} \in \bI_{n,\cs}(X_k)$,
and note that $\spt(S_k) \sub \ph_k(B^{n+1}) \sub B(x_k,2)$ for $x_k := \ph_k(0)$.
By Theorem~\ref{Thm:plateau-n} there exists a minimizing filling $V_k \in \bI_{n+1,\cs}(X_k)$ 
of $S_k$ with support in $B(x_k,2)$.
It follows from the assumption that for sufficiently large $k$, 
\[
  \M(V_k) \le c \cdot \M(S_k)^{(n+1)/n}
  \le c (1 + \eps_k)^{n+1}\cdot \vol_n(S^n)^{(n+1)/n}.
\]
Since the $\ph_k$ are uniformly Lipschitz, and by the lower density bound for $V_k$,
the sets $K_k := \ph_k(B^{n+1}) \cup \spt(V_k)$ are equi-compact. Hence, by Gromov's
theorem~\cite{Gro_polygrowth}, they admit isometric embeddings $\psi_k \colon K_k \to Z$
into a single compact metric space $Z$. After passing to subsequences, we may
assume that the $\psi_k(K_k)$ converge to a compact set $K \sub Z$ with respect
to the Hausdorff distance in $Z$, the $(\psi_k)_\#V_k$ converge weakly to a current
$V \in \bI_{n+1,\cs}(Z)$ with support in $K$ (recall Theorem~\ref{Thm:cptness}), 
and the $\psi_k \circ \ph_k$
converge uniformly to an isometric embedding $\iota \colon B^{n+1} \to Z$
with image in $K$. This implies that
$(\psi_k \circ \ph_k)_\#\bb{B^{n+1}} \to T := \iota_\#\bb{B^{n+1}} \in \bI_{n+1,\cs}(Z)$
weakly. Note that $\d T = \d V$, as $\d(\psi_k \circ \ph_k)_\#\bb{B^{n+1}} =
(\psi_k)_\# S_k = \d(\psi_k)_\#V_k$. By the lower semi-continuity of mass
under weak convergence, and since $c < \ic_{n+1}$, we have
\[
  \M(V) \le \liminf_{k \to \infty} \M(V_k) \le c \cdot \vol_n(S^n)^{(n+1)/n} < \om_{n+1}.
\]
Note that $K$ is the Gromov--Hausdorff limit of the sets $K_k \sub X_k$ and thus embeds isometrically
into any ultralimit $(X_\om,x_\om)$ of the $(X_k,x_k)$. Since $(X_\om,x_\om)$ is a $\CAT(0)$
space, and the image of $\iota(B^{n+1})$ is a closed convex subset, 
we infer that there exists a $1$-Lipschitz retraction $\pi \colon K \to \iota(B^{n+1})$.
The cycle $\pi_\#V - T$ vanishes, because any $(n+2)$-dimensional filling
in $\iota(B^{n+1})$ is zero. Hence, $\M(T) = \M(\pi_\#V) \le \M(V) < \om_{n+1}$. This contradicts
the fact that $\M(T) = \M(\iota_\#\bb{B^{n+1}}) = \om_{n+1}$.
\end{proof}

It is clear that  $(2)$ implies $(1)$ in Theorem~\ref{main_1}.
The implication $(3)\Rightarrow (2)$ is more elaborate and we need
some preparation. The following lemma  allows us to
deform a general Lipschitz disc to a {\em piecewise minimal\/} Lipschitz disc, while keeping control on boundary length 
and area.

\blem\label{Lem:triangular_disc}
Let $X$ be a $\CAT(0)$ space and $\ph\colon D\to X$ a Lipschitz disc with $\area(\ph)\leq a\cdot t^2$
and $\length_\ph(\d D)\leq t$ for some constants $a, t>0$.
Let $V=\{v_1,\ldots,v_n\}\subset\d D$ for $n \ge 1$ be a cyclically ordered set and denote by $e_i\subset\d D$ the arc between $v_i$ and $v_{i+1}$, where $v_{n+1} := v_1$.
Moreover, let $D_i\subset D$ denote the closed disc bounded by $e_i$ and the geodesic $v_iv_{i+1}$.
Let $\psi\colon D\to X$ be a Lipschitz disc such that
\begin{itemize}
\item $\psi$ agrees with $\ph$ on $V$;
    \item $\psi$ maps the whole disc $D_i$ to the geodesic in $X$ from $\ph(v_i)$ to $\ph(v_{i+1})$;
    \item $\psi$ maps every geodesic $v_1 v_i$ to the geodesic in $X$ from $\ph(v_1)$ to $\ph(v_{i})$;
    \item $\psi$ restricts to a minimal triangle $T_i$ on every triangle
    $v_1v_iv_{i+1} \subset D$.
\end{itemize}
Then there exists a Lipschitz homotopy $h:S^1\times[0,1]\to X$ from $\ph|_{S^1}$ to $\psi|_{S^1}$ with 
$\area(h)\leq\frac{1}{\pi}\cdot t^2$, and the discs $\ph$ and $\psi$ together with the homotopy $h$
form a Lipschitz $2$-sphere $\sigma \colon S^2\to X$ with $\area(\sigma)\leq (a+1)\cdot t^2$. 
\elem

\begin{figure}[ht]
    \centering
    \includegraphics[scale=0.3,trim={0cm 0cm 7cm 0cm},clip]{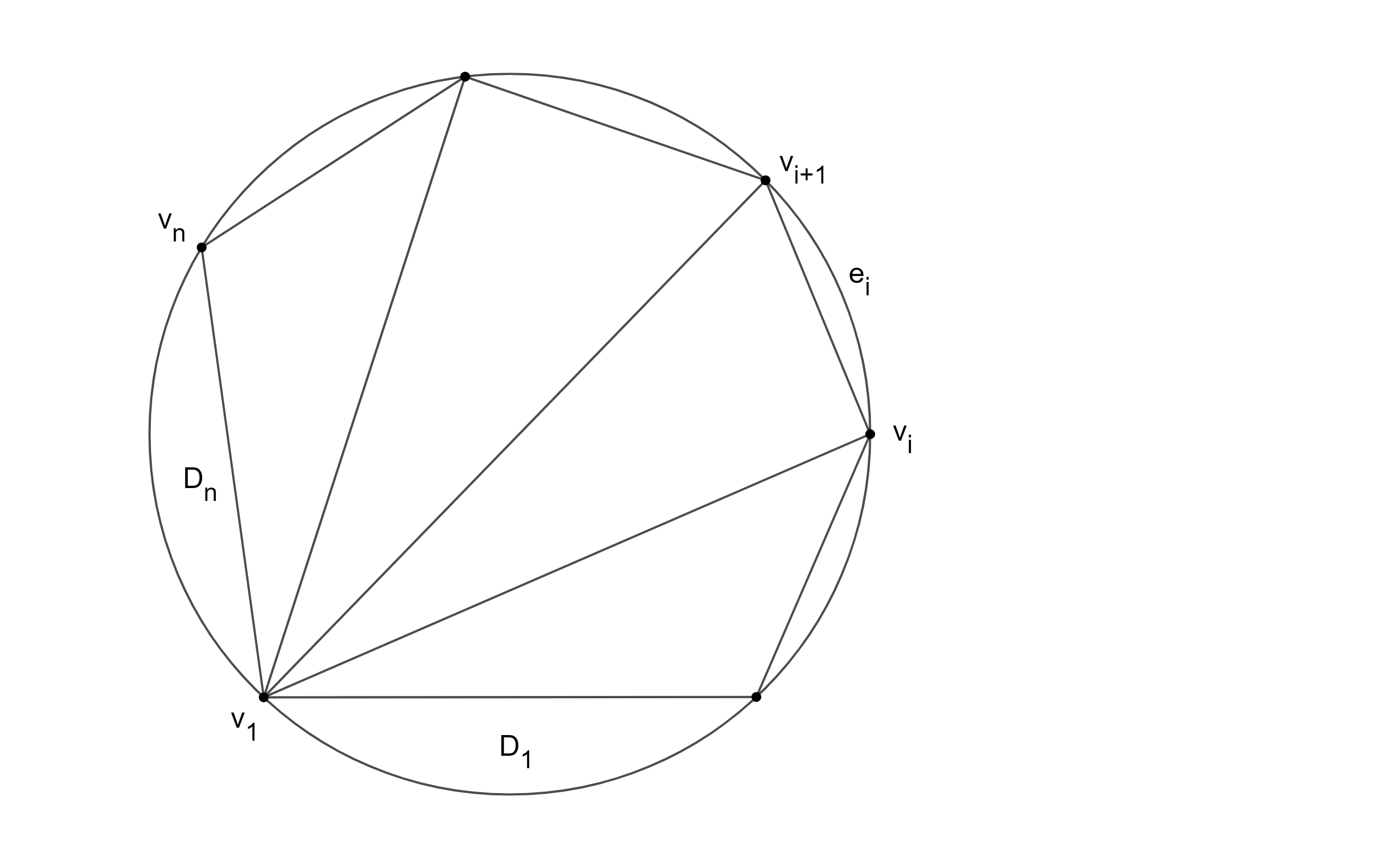}
    \caption{Decomposition of $D$.}
    \label{fig:decom_d}
\end{figure}

\proof
For $i=1,\ldots,n$, let $\alpha_i$ denote the loop composed of $\ph|_{e_i}$ and the geodesic in $X$ from $\ph(v_{i+1})$ to $\ph(v_{i})$.
To get $h$, we use Proposition~\ref{Pro:reshetnyak} to fill each $\alpha_i$ by a Lipschitz disc of area 
\[
a_i\leq\frac{1}{4\pi}\cdot\length_\ph(\al_i)^2\leq\frac{1}{\pi}\cdot\length_\ph(e_i)^2.
\]
The union of these discs defines $h$ and we have
\[
\area(h)\leq\sum_{i=1}^n a_i\leq\frac{1}{\pi}\cdot\length_\ph(\d D)^2\leq\frac{t^2}{\pi}.
\]
Furthermore, the Lipschitz disc $\psi$ satisfies
\[
\area(\psi)\leq\sum_{i=1}^n \area(T_i)\leq \sum_{i=1}^n \frac{t}{2}\cdot\length_\ph(e_i)\leq\frac{t^2}{2}.
\]
Hence, the Lipschitz $2$-sphere $\sigma$
formed by $\phi$ and $\psi$ together with $h$ has
\[
\area(\sigma)\leq\Bigl(a+\frac{1}{2}+\frac{1}{\pi}\Bigr)\cdot t^2<(a+1)\cdot t^2
\]
as required.
\qed
\medskip

Now we are ready for the proof of the implication $(3) \Rightarrow (2)$ in Theorem~\ref{main_1}.

\begin{Thm} \label{Thm:main_tech}
Let $X$ be a proper $\CAT(0)$ space of asymptotic rank at most~$2$. 
For every $\delta > 0$, there exists a constant $C$ such that every 
Lipschitz $2$-sphere $S\subset X$ extends to a Lipschitz  $3$-ball $B\subset X$ 
with volume 
\[
\vol(B) \le C \cdot\area(S)^{1+\del}.
\]
\end{Thm}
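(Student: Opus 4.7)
The plan is a contradiction argument combined with induction on $\delta$. The base case $\delta = 1/2$ follows from Theorem~\ref{main_3}. For the inductive step, fix $\delta \in (0, 1/2)$ and assume the theorem for some $\delta' > \delta$ sufficiently close to $\delta$ (specifically $\delta' - \delta \le \delta^2/2$), with constant $C(\delta')$; iterating produces a finite chain of inductive steps from $\delta_0 = 1/2$ reaching any target. Suppose for contradiction that for every $M > 0$ some Lipschitz sphere $f$ violates the bound $\Fillvol(f) \le M \cdot \area(f)^{1+\delta}$. Fix a large $M$, to be chosen larger than a constant $K(\delta)$ emerging below. The inductive hypothesis combined with the violator inequality forces $\area(f) > (M/C(\delta'))^{1/(\delta'-\delta)} \to \infty$ as $M \to \infty$. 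Passing to a near-minimizer of area among violators and applying Lemma~\ref{Lem:round-minimal} with $\theta \in (0,1]$ and $\eps = \eps(\delta,\theta)$, I obtain an $(M, \delta, \eps)$-minimal violator $f$ that is intrinsically $(c, \theta)$-isoperimetric for $c = c(\delta)$.

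I then apply Proposition~\ref{Pro:triangulation} at a scale $s$ chosen so that $s^4 \sim \area(f)^{2-\delta}$, the value that will balance the competing contributions below. This yields a reparametrization $f'$ and a polygonal disc decomposition $S^2 = D_1 \cup \ldots \cup D_n$ with $n = O(\area(f)/s^2)$ pieces, boundary lengths $\length_{f'}(\partial D_i) = O(s)$, areas $\area_{f'}(D_i) = O(s^2)$, and an auxiliary graph $\Gamma$ of $f'$-length $O(\area(f)/s)$; the corresponding homotopy has volume $O(M s^{2\delta} \area(f))$. Next, I deform $f'$ face by face into a piecewise minimal sphere $\bar f$ via Lemma~\ref{Lem:triangular_disc}, triangulating each $D_i$ as a fan of minimal triangles from one of its boundary vertices. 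On each face this introduces an auxiliary Lipschitz sphere of area $O(s^2) \le \area(f) - \eps$, which by the $(M, \delta, \eps)$-minimality of $f$ fills with volume $O(M s^{2(1+\delta)})$; summing over the $O(\area(f)/s^2)$ faces contributes the same total order $O(M s^{2\delta} \area(f))$.

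The sphere $\bar f$ is now a polyhedral complex whose $O(\area(f)/s^2)$ faces are minimal triangles of area $O(s^2)$. Choosing any vertex $v_0$ of the complex, for each face $\Delta_i$ not adjacent to $v_0$ I form the minimal tetrahedron $\tau_i$ with apex $v_0$, base $\Delta_i$, and three side faces given by minimal discs filling the geodesic triangles spanned by $v_0$ and the edges of $\Delta_i$. Theorem~\ref{main_4} provides fillings with $\Fillvol(\tau_i) \le \mu \cdot \area(\tau_i)$, and gluing them yields a filling of $\bar f$ of volume at most $\mu \sum_i \area(\tau_i)$. The base contribution is at most $\area(f)$; each of the $O(\area(f)/s^2)$ edges supports two side triangles of perimeter $\le 2D + O(s)$, where $D = \max_v d(v_0, v) \le \length(\Gamma) = O(\area(f)/s)$, hence of area $O(D^2) = O(\area(f)^2/s^2)$ by Proposition~\ref{Pro:reshetnyak}. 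The total side contribution is therefore $O(\area(f)^3/s^4) = O(\area(f)^{1+\delta})$ by the choice of $s$, with an implicit constant $K = K(\delta)$ independent of $M$. Combined with the homotopy terms, which are of order $M \cdot \area(f)^{1+\delta-\delta^2/2}$ and negligible once $\area(f) \ge M^{2/\delta^2}$ (a bound guaranteed by the minimizer selection for large $M$), the total filling volume of $f$ is at most $(K+1) \area(f)^{1+\delta}$, contradicting $\Fillvol(f) > M \cdot \area(f)^{1+\delta}$ once $M > K+1$. The core difficulty is this three-way balancing: the side triangles of the tetrahedra cannot be controlled by any a priori diameter bound on $f(S^2)$, so the $f'$-length of the $1$-skeleton $\Gamma$ produced by Proposition~\ref{Pro:triangulation} serves as the only available combinatorial substitute, which in turn forces the specific scale $s^4 \sim \area(f)^{2-\delta}$ and the induction condition $\delta' - \delta \le \delta^2/2$.
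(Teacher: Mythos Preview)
Your argument is correct and shares the paper's overall architecture: pass to an $(M,\delta,\eps)$-minimal violator, apply Proposition~\ref{Pro:triangulation}, deform to a piecewise minimal sphere via Lemma~\ref{Lem:triangular_disc}, cone from a vertex, and fill the resulting minimal tetrahedra with Theorem~\ref{main_4}. The difference lies in a single estimate, and it accounts for the structural gap between the two proofs.

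You bound the area of each side face of a tetrahedron (apex $v_0$, base an edge of length $O(s)$) by Reshetnyak, getting area $O(D^2)$ with $D = O(\area(f)/s)$. Summing over the $O(\area(f)/s^2)$ edges gives $O(\area(f)^3/s^4)$, which forces your scale $s^4 \sim \area(f)^{2-\delta}$ and in turn the homotopy contribution $M\,\area(f)^{1+\delta-\delta^2/2}$. Absorbing that term requires the inductive hypothesis at a nearby $\delta'$ with $\delta'-\delta \le \delta^2/2$, hence the descending chain from $1/2$. The paper instead uses the elementary $\CAT(0)$ coning bound: a minimal triangle with one side of length $e$ and opposite vertex at distance at most $D$ has area at most $\tfrac12\,eD$. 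This yields $\area(\tau_l) = O(sD) + O(s^2) = O(\area(f))$ per tetrahedron, a total tetrahedral contribution of only $O(\area(f)^2/s^2)$, and the scale $s = \area(f)^{(1-\delta)/2}$ then makes the homotopy term $M\,\area(f)^{1+\delta-\delta^2}$, which is controlled directly by the area lower bound $\area(f) \ge (M/C)^{2/(1-2\delta)}$ coming from the Euclidean inequality (Theorem~\ref{Thm:EII}) alone. So the paper reaches every $\delta>0$ in a single step rather than by iteration; the saving comes from recognizing that the tetrahedra are thin (one short edge of length $O(s)$, two long edges of length $O(D)$), which your perimeter-squared bound does not exploit.
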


\begin{proof}
Suppose for contradiction that there exists $\delta\in(0,1/2)$
such that for every $M>0$ we find a Lipschitz $2$-sphere $f\colon S^2\to X$ with
\[
\Fillvol(f)\geq M\cdot \area(f)^{1+\delta}.
\]
Let $c:=c(\delta) \geq 1$ and $\eps:=\eps(\delta,1)\in(0,1]$ be the constants from Lemma~\ref{Lem:round-minimal} for $\theta = 1$.
We choose $f$ to be $(M,\delta,\eps)$-minimal, for some (large) $M$, to be determined later.
By the lemma, $f$ is intrinsically $(c,1)$-isoperimetric.
Since $\Fillvol(f) \le C \cdot \area(f)^{1 + 1/2}$ by Theorem~\ref{Thm:EII}, 
we have 
\[
\area(f)\geq \left(\frac{M}{C}\right)^{2/(1-2\delta)}.
\]
To arrive at a contradiction we will construct a filling of $f$ by a Lipschitz ball of small volume. To do this, we will homotop $f$ to a Lipschitz sphere consisting of a controlled number of minimal triangles, which we then fill using the linear isoperimetric inequality for minimal tetrahedra.

By Proposition~\ref{Pro:triangulation}, for $\area(f) \ge 4$ and $s>0$ with $\frac{1}{3c} \le s^2 \le \frac{1}{8c}\area(f)$, we find
 a Lipschitz $2$-sphere $f'\colon S^2\to X$ with $\area(f')\leq\area(f)$ and a Lipschitz homotopy $h$ from $f$ to $f'$ with
\[
\vol(h)\leq 12M\cdot c\cdot s^{2\delta}\cdot\area(f);
\]
moreover,  there exists a disc decomposition $S^2 = D_1 \cup \ldots \cup D_n$, induced by a graph $\Ga\subset S^2$, such that
\[
n \le \frac{14 \area(f)}{s^2}, \quad
\length_{f'}(\d D_i) \le 34 s, \quad  
\area_{f'}(D_i) \le 3c \cdot (34 s)^2
\]
for $i = 1,\ldots,n$. The value of $s$ will be specified later.
Applying Lemma~\ref{Lem:triangular_disc} to each of the Lipschitz discs $f'|_{D_i}$ and the 
corresponding set of vertices on $\d D_i$, we obtain Lipschitz $2$-spheres $\sigma_i$ with 
\[
\area(\sigma_i)\leq 4c \cdot (34 s)^2\leq \frac{3}{4}\cdot\area(f)\leq \area(f)-\eps,
\]
assuming that $6200c \cdot s^2\leq \area(f)$.
Thus, since $f$ is $(M,\delta,\eps)$-minimal,
\[
\Fillvol(\sigma_i)\leq M\cdot \area(\sigma_i)^{1+\delta}.
\]
We can arrange that the Lipschitz homotopies $\d D_i \times [0,1] \to X$ provided by 
Lemma~\ref{Lem:triangular_disc} agree on every common edge of two adjacent discs.
Filling each sphere $\sigma_i$ by a Lipschitz $3$-ball, we then obtain a Lipschitz homotopy $h'\colon S^2\times[0,1]\to X$
from $f'$ to a Lipschitz $2$-sphere $\bar f$ such that
\[
\vol(h') \leq \frac{14 \area(f)}{s^2}\cdot M\cdot (4\cdot 34^2)^{3/2} \cdot c^2 \cdot s^{2+2\delta}. 
\]
Combining $h$ and $h'$, we get a Lipschitz homotopy $\bar h$ from $f$ to $\bar f$ with volume
\[
\vol(\bar h) \le \vol(h) + \vol(h') \le 
5\cdot 10^6 c^2 \cdot M \cdot s^{2\delta} \cdot\area(f).
\]
By construction, the Lipschitz sphere $\bar f$ is piecewise minimal, 
that is, composed of finitely many minimal triangles spanned by geodesic triangles $T_1,\ldots,T_m$ in $X$, each with vertices in $f'(\d D_i)$ for some 
$i$ and, hence, circumference at most $34 s$. Moreover, every triangle $T_l$ has one side corresponding to an edge in $\Ga$, and so the number of triangles 
$m$ is bounded by twice the number $E$ of edges in $\Ga$. 
By Euler's formula, we have $E \leq 3n$ and therefore $m\leq 6n$.

Choose a point $o\in f'(\Ga)$. It follows from 
Proposition~\ref{Pro:triangulation} that 
\[
\diam(f'(\Ga))\leq\frac12 \cdot\length_{f'}(\Ga)\leq \frac{119\cdot\area(f)}{s}.
\]
Hence, by taking the geodesics from $o$ to the vertices of $T_l$ and inserting minimal triangles, we obtain a minimal tetrahedron $\tau_l$ with 
\[
\area(\tau_l)\leq 17 \cdot 119 \cdot \area(f) + (17s)^2 \leq 2024\cdot\area(f).
\]
(recall that we have added the assumption $6200c \cdot s^2 \le \area(f)$).
By Theorem~\ref{Thm:tetra-filling}, we have 
\[
\Fillvol(\tau_l)\leq 2024\mu\cdot \area(f).
\]
Again, since the number $m$ of tetrahedra is at most $6n$, we obtain a Lipschitz $3$-ball $\Phi\colon B^3\to X$ filling $\bar f$
with
\[
\vol(\Phi)\leq 2\cdot 10^5 \mu\cdot \frac{\area(f)^2}{s^2}.
\]
Taking the homotopy $\bar h$ into account, we get a Lipschitz $3$-ball $\Psi\colon B^3\to X$ filling $f$ with 
\begin{align*}
\vol(\Psi) &\leq \vol(\bar h)+\vol(\Phi) \\
&\leq 5\cdot 10^6 c^2 \cdot M \cdot s^{2\delta}\cdot \area(f)
+2 \cdot 10^5\mu\cdot\frac{\area(f)^2}{s^2}.
\end{align*}
Now we choose $s=\area(f)^{(1-\delta)/2}$ and arrive at
\[
\Fillvol(f)\leq \left(\frac{5\cdot 10^6 c^2}{\area(f)^{\delta^2}}+\frac{2\cdot 10^5\mu}{M}\right)\cdot M\cdot\area(f)^{1+\delta}.
\]
Recall that $\area(f) \ge (M/C)^{2/(1-2\del)}$. Thus, if 
\[
M > \max\left\{C\cdot(10^7 c^2)^{(1-2\del)/(2\del^2)}, 4\cdot 10^5 \mu \right\},
\]
then $\Fillvol(f) < M\cdot\area(f)^{1+\delta}$, in contradiction to 
the assumption.
\end{proof}

We conclude this section with 

\proof[Proof of Corollary~\ref{Cor:filling_surfaces}]
We fix $\delta>0$ and prove the claim by induction on $g$ where the case $g=0$ is the content of 
Theorem~\ref{main_1}. Now suppose we know the result for Lipschitz surfaces of genus at most $g-1$. We equip every closed surface $\Sigma$ with a Riemannian metric  
induced by an embedding $\iota:\Sigma\hookrightarrow\R^3$. 
Let $f \colon \Sigma_g \to X$ be a closed Lipschitz surface
of genus $g$.  For $\eps>0$ consider the related bilipschitz map
$f_\eps\colon\Sigma_g\to X\times\R^3$ with 
$f_\eps(x)=(f(x),\eps\iota(x))$. Choose $\eps$ small enough such that 
$\area(f_\eps)\leq 2\area(f)$. We will apply the systolic inequality for closed surfaces \cite{Gro_FRR, Gro_sys}. 
Since $\Sigma_g$ equipped with the induced path metric $d_{f_\eps}$ is bilipschitz to the Riemannian manifold $\Sigma_g$,
the systolic inequality holds for $(\Sigma_g,d_{f_\eps})$.
Thus, we find a non-contractible non-separating simple closed curve $\al\subset \Sigma_g$ such that 
\[\length_{f_\eps}(\al)\leq \sigma_g\cdot\sqrt{\area(f_\eps)}\]
where $\sigma_g$ denotes the systolic ratio.
By the isoperimetric inequality, we can fill $f_\eps\circ\alpha$ by a disc $D\subset X\times\R^3$ with $\area(D)\leq\frac{1}{4\pi}\cdot\length(f_\eps\circ\alpha)^2$. Inserting the disc $D$ cuts $\Sigma_g$
into a surface $\Sigma_\eps'$ of genus $g-1$ with 
\[\area_{f_\eps}(\Sigma_\eps')=\area_{f_\eps}(\Sigma)+\frac{\sigma_{g}^2}{2\pi}\cdot\area_{f_\eps}(\Sigma).\]
We denote by $\Sigma'$ the projection of $\Sigma'_\eps$ to $X$.
By induction, we obtain a Lipschitz handlebody 
$\bar f\colon H_{g-1}\to X$ filling $\Sigma'$ such that
\[\vol(H_{g-1})\leq C_{g-1}\cdot\area(\Sigma')^{1+\delta}.\]
Now $H_{g-1}$ can be Lipschitz parametrized by a genus $g$ handlebody $H_g$ filling $\Sigma_g$. We obtain
\begin{align*}
\vol(H_{g})&=\vol(H_{g-1})\leq C_{g-1}\cdot\area(\Sigma')^{1+\delta}\\
&\leq C_{g-1}\cdot\area(\Sigma_\eps')^{1+\delta}\\
&\leq C_{g-1} \biggl( 2+\frac{\sigma_{g}^2}{\pi} \biggr)^{1+\delta}\cdot\area(\Sigma)^{1+\delta}.
\end{align*}
This proves the claim with $C_g:=C_{g-1}\bigl(2+\frac{\sigma_{g}^2}{\pi}\bigr)^{1+\delta}$.
\qed


\bibliographystyle{alpha}
\bibliography{gap}

\newcommand{\etalchar}[1]{$^{#1}$}
\begin{thebibliography}{CDMW18}

\bibitem[ABC13]{AlBiCr_Sard}
G.~Alberti, S.~Bianchini, and G.~Crippa.
\newblock Structure of level sets and {S}ard-type properties of {L}ipschitz
  maps.
\newblock {\em Ann. Sc. Norm. Super. Pisa Cl. Sci. (5)}, 12(4):863--902, 2013.

\bibitem[AK00]{AmbK}
L.~Ambrosio and B.~Kirchheim.
\newblock Currents in metric spaces.
\newblock {\em Acta Math.}, 185(1):1--80, 2000.

\bibitem[Bal95]{ballmannbook}
W.~Ballmann.
\newblock {\em Lectures on spaces of nonpositive curvature}, volume~25 of {\em
  DMV Seminar}.
\newblock Birkh\"auser Verlag, Basel, 1995.
\newblock With an appendix by Misha Brin.

\bibitem[BGS85]{BGS}
W.~Ballmann, M.~Gromov, and V.~Schroeder.
\newblock {\em Manifolds of non-positive curvature}, volume~61 of {\em Progress
  in Math.}
\newblock Birkhauser, 1985.

\bibitem[BH99]{BriH}
M.~Bridson and A.~Haefliger.
\newblock {\em Metric spaces of non-positive curvature}, volume 319 of {\em
  Grundlehren der Mathematischen Wissenschaften}.
\newblock Springer-Verlag, Berlin, 1999.

\bibitem[Bow95]{Bow_sub}
B.~H. Bowditch.
\newblock A short proof that a subquadratic isoperimetric inequality implies a
  linear one.
\newblock {\em Michigan Math. J.}, 42(1):103--107, 1995.

\bibitem[BWY23]{BasWY}
G.~Basso, S.~Wenger, and R.~Young.
\newblock Undistorted fillings in subsets of metric spaces.
\newblock {\em Adv. Math.}, 423:Paper No. 109024, 54, 2023.

\bibitem[CDMW18]{CDMW_quant}
G.~R. Chambers, D.~Dotterrer, F.~Manin, and S.~Weinberger.
\newblock Quantitative null-cobordism.
\newblock {\em J. Amer. Math. Soc.}, 31(4):1165--1203, 2018.
\newblock With an appendix by Manin and Weinberger.

\bibitem[CR22]{CrRo_trian}
P.~Creutz and M.~Romney.
\newblock Triangulating metric surfaces.
\newblock {\em Proc. Lond. Math. Soc. (3)}, 125(6):1426--1451, 2022.

\bibitem[Cre22]{Cr_sing}
P.~Creutz.
\newblock Plateau's problem for singular curves.
\newblock {\em Comm. Anal. Geom.}, 30(8):1779--1792, 2022.

\bibitem[Des16]{Des}
D.~Descombes.
\newblock Asymptotic rank of spaces with bicombings.
\newblock {\em Math. Z.}, 284(3-4):947--960, 2016.

\bibitem[Dru01]{Drutu-iso}
C.~Dru\c{t}u.
\newblock C\^ones asymptotiques et invariants de quasi-isom\'etrie pour des
  espaces m\'etriques hyperboliques.
\newblock {\em Ann. Inst. Fourier (Grenoble)}, 51(1):81--97, 2001.

\bibitem[ECH{\etalchar{+}}92]{Epstein_book}
David B.~A. Epstein, James~W. Cannon, Derek~F. Holt, Silvio V.~F. Levy,
  Michael~S. Paterson, and William~P. Thurston.
\newblock {\em Word processing in groups}.
\newblock Jones and Bartlett Publishers, Boston, MA, 1992.

\bibitem[GL23]{GolL}
T.~Goldhirsch and U.~Lang.
\newblock Characterizations of higher rank hyperbolicity.
\newblock {\em Math. Z.}, 305(1):Paper No. 13, 26, 2023.

\bibitem[Gol22]{Gol}
T.~Goldhirsch.
\newblock Lipschitz chain approximation of metric integral currents.
\newblock {\em Anal. Geom. Metr. Spaces}, 10(1):40--49, 2022.

\bibitem[Gro81]{Gro_polygrowth}
M.~Gromov.
\newblock Groups of polynomial growth and expanding maps.
\newblock {\em Inst. Hautes \'Etudes Sci. Publ. Math.}, 53:53--73, 1981.

\bibitem[Gro83]{Gro_FRR}
M.~Gromov.
\newblock Filling {R}iemannian manifolds.
\newblock {\em J. Differential Geom.}, 18(1):1--147, 1983.

\bibitem[Gro87]{Gro-HG}
M.~Gromov.
\newblock Hyperbolic groups.
\newblock In {\em Essays in group theory}, volume~8 of {\em Math. Sci. Res.
  Inst. Publ.}, pages 75--263. Springer, New York, 1987.

\bibitem[Gro93]{Gro-AI}
M.~Gromov.
\newblock Asymptotic invariants of infinite groups.
\newblock In {\em Geometric group theory, {V}ol. 2 ({S}ussex, 1991)}, volume
  182 of {\em London Math. Soc. Lecture Note Ser.}, pages 1--295. Cambridge
  Univ. Press, Cambridge, 1993.

\bibitem[Gro96]{Gro_sys}
M.~Gromov.
\newblock Systoles and intersystolic inequalities.
\newblock In {\em Actes de la {T}able {R}onde de {G}\'eom\'etrie
  {D}iff\'erentielle ({L}uminy, 1992)}, volume~1 of {\em S\'emin. Congr.},
  pages 291--362. Soc. Math. France, Paris, 1996.

\bibitem[Gro99a]{Grom_Met}
M.~Gromov.
\newblock {\em Metric structures for {R}iemannian and non-{R}iemannian spaces},
  volume 152 of {\em Progress in Mathematics}.
\newblock Birkh\"auser Boston, Inc., Boston, MA, 1999.
\newblock Based on the 1981 French original [MR0682063 (85e:53051)], With
  appendices by M.\ Katz, P.\ Pansu and S.\ Semmes, Translated from the French
  by Sean Michael Bates.

\bibitem[Gro99b]{Gro_quant}
M.~Gromov.
\newblock Quantitative homotopy theory.
\newblock In {\em Prospects in mathematics ({P}rinceton, {NJ}, 1996)}, pages
  45--49. Amer. Math. Soc., Providence, RI, 1999.

\bibitem[Hei01]{Heinonen_book}
J.~Heinonen.
\newblock {\em Lectures on analysis on metric spaces}.
\newblock Universitext. Springer-Verlag, New York, 2001.

\bibitem[Isl23]{Isl}
H.~Isleifsson.
\newblock Linear isoperimetric inequality for homogeneous hadamard manifolds.
\newblock {\em J. Topol. Anal.}, 2023.

\bibitem[Kir94]{Kirch}
B.~Kirchheim.
\newblock Rectifiable metric spaces: local structure and regularity of the
  {H}ausdorff measure.
\newblock {\em Proc. Amer. Math. Soc.}, 121(1):113--123, 1994.

\bibitem[KL20]{KleL}
B.~Kleiner and U.~Lang.
\newblock Higher rank hyperbolicity.
\newblock {\em Invent. Math.}, 221(2):597--664, 2020.

\bibitem[Kle99]{Kle}
B.~Kleiner.
\newblock The local structure of spaces with curvature bounded above.
\newblock {\em Math. Z.}, 231:409--456, 1999.

\bibitem[KS93]{KS}
N.~Korevaar and R.~Schoen.
\newblock Sobolev spaces and harmonic maps for metric space targets.
\newblock {\em Comm. Anal. Geom.}, 1(3-4):561--659, 1993.

\bibitem[Lan00]{Lan_LII}
U.~Lang.
\newblock Higher-dimensional linear isoperimetric inequalities in hyperbolic
  groups.
\newblock {\em Internat. Math. Res. Notices}, (13):709--717, 2000.

\bibitem[Lan11]{Lan_LC}
U.~Lang.
\newblock Local currents in metric spaces.
\newblock {\em J. Geom. Anal.}, 21(3):683--742, 2011.

\bibitem[LDR15]{LeDR}
E.~Le~Donne and T.~Rajala.
\newblock Assouad dimension, {N}agata dimension, and uniformly close metric
  tangents.
\newblock {\em Indiana Univ. Math. J.}, 64(1):21--54, 2015.

\bibitem[Leu14]{Leu}
E.~Leuzinger.
\newblock Optimal higher-dimensional {D}ehn functions for some {$\rm CAT(0)$}
  lattices.
\newblock {\em Groups Geom. Dyn.}, 8(2):441--466, 2014.

\bibitem[LS97]{LS_kirszbraun}
U.~Lang and V.~Schroeder.
\newblock Kirszbraun's theorem and metric spaces of bounded curvature.
\newblock {\em Geom. Funct. Anal.}, 7(3):535--560, 1997.

\bibitem[LS05]{LanS}
U.~Lang and T.~Schlichenmaier.
\newblock Nagata dimension, quasisymmetric embeddings, and {L}ipschitz
  extensions.
\newblock {\em Int. Math. Res. Not.}, (58):3625--3655, 2005.

\bibitem[LS24]{LS_2dim}
A.~Lytchak and S.~Stadler.
\newblock Curvature bounds of subsets in dimension two.
\newblock {\em J. Differential Geom.}, 127(3):1245--1265, 2024.

\bibitem[LW17]{LWPlateau}
A.~Lytchak and S.~Wenger.
\newblock Area minimizing discs in metric spaces.
\newblock {\em Arch. Ration. Mech. Anal.}, 223(3):1123--1182, 2017.

\bibitem[LW18a]{LWint}
A.~Lytchak and S.~Wenger.
\newblock Intrinsic structure of minimal discs in metric spaces.
\newblock {\em Geom. Topol.}, 22(1):591--644, 2018.

\bibitem[LW18b]{LWcurv}
A.~Lytchak and S.~Wenger.
\newblock Isoperimetric characterization of upper curvature bounds.
\newblock {\em Acta Math.}, 221(1):159--202, 2018.

\bibitem[LW24]{LyWa_min}
A.~Lytchak and S.~Wagner.
\newblock Curvature bounds on length-minimizing discs.
\newblock {\em Geom. Dedicata}, 218(2):Paper No. 49, 21, 2024.

\bibitem[LWY20]{LWY_dehn}
A.~Lytchak, S.~Wenger, and R.~Young.
\newblock Dehn functions and {H}\"older extensions in asymptotic cones.
\newblock {\em J. Reine Angew. Math.}, 763:79--109, 2020.

\bibitem[Mit19]{Mit_coin}
A.~Mitsuishi.
\newblock The coincidence of the homologies of integral currents and of
  integral singular chains, via cosheaves.
\newblock {\em Math. Z.}, 292(3-4):1069--1103, 2019.

\bibitem[MW23]{MW_pl}
F.~Manin and S.~Weinberger.
\newblock Quantitative pl bordism.
\newblock {\em arXiv:2311.16389}, 2023.

\bibitem[NR23]{NtRo_approx}
D.~Ntalampekos and M.~Romney.
\newblock Polyhedral approximation of metric surfaces and applications to
  uniformization.
\newblock {\em Duke Math. J.}, 172(9):1673--1734, 2023.

\bibitem[Ol{'}91]{Ol_sub}
A.~Yu. Ol{'}shanski\u{i}.
\newblock Hyperbolicity of groups with subquadratic isoperimetric inequality.
\newblock {\em Internat. J. Algebra Comput.}, 1(3):281--289, 1991.

\bibitem[Pap95a]{Pap_sub}
P.~Papasoglu.
\newblock On the sub-quadratic isoperimetric inequality.
\newblock In {\em Geometric group theory ({C}olumbus, {OH}, 1992)}, volume~3 of
  {\em Ohio State Univ. Math. Res. Inst. Publ.}, pages 149--157. de Gruyter,
  Berlin, 1995.

\bibitem[Pap95b]{Pap_strong}
P.~Papasoglu.
\newblock Strongly geodesically automatic groups are hyperbolic.
\newblock {\em Invent. Math.}, 121(2):323--334, 1995.

\bibitem[Pap96]{Pap-AC}
P.~Papasoglu.
\newblock On the asymptotic cone of groups satisfying a quadratic isoperimetric
  inequality.
\newblock {\em J. Differential Geom.}, 44(4):789--806, 1996.

\bibitem[Pap09]{Pap-CC}
P.~Papasoglu.
\newblock Cheeger constants of surfaces and isoperimetric inequalities.
\newblock {\em Trans. Amer. Math. Soc.}, 361(10):5139--5162, 2009.

\bibitem[PS19]{PS}
A.~Petrunin and S.~Stadler.
\newblock Metric-minimizing surfaces revisited.
\newblock {\em Geom. Topol.}, 23(6):3111--3139, 2019.

\bibitem[Re{\v s}68]{Res}
Ju.~G. Re{\v s}etnjak.
\newblock Non-expansive maps in a space of curvature no greater than {$K$}.
\newblock {\em Sibirsk. Mat. \v Z.}, 9:918--927, 1968.

\bibitem[RS09]{RiSch_hom}
C.~Riedweg and D.~Sch\"appi.
\newblock Singular ({L}ipschitz) homology and homology of integral currents.
\newblock {\em arXiv:0902.3831}, 2009.

\bibitem[Sch20]{Schulze}
F.~Schulze.
\newblock Optimal isoperimetric inequalities for surfaces in any codimension in
  {C}artan-{H}adamard manifolds.
\newblock {\em Geom. Funct. Anal.}, 30(1):255--288, 2020.

\bibitem[Sta21]{Sta}
S.~Stadler.
\newblock The structure of minimal surfaces in {CAT}(0) spaces.
\newblock {\em J. Eur. Math. Soc. (JEMS)}, 23(11):3521--3554, 2021.

\bibitem[SW24]{StaWen}
S.~Stadler and S.~Wenger.
\newblock Isoperimetric inequalities vs. upper curvature bounds.
\newblock {\em arXiv:2309.11329, to appear in Geom. Topol.}, 2024.

\bibitem[Tuk80]{Tuk_schoen}
P.~Tukia.
\newblock The planar {S}ch\"onflies theorem for {L}ipschitz maps.
\newblock {\em Ann. Acad. Sci. Fenn. Ser. A I Math.}, 5(1):49--72, 1980.

\bibitem[Wen05]{Wen-EII}
S.~Wenger.
\newblock Isoperimetric inequalities of {E}uclidean type in metric spaces.
\newblock {\em Geom. Funct. Anal.}, 15(2):534--554, 2005.

\bibitem[Wen06a]{W_erk}
S.~Wenger.
\newblock Filling invariants at infinity and the {E}uclidean rank of {H}adamard
  spaces.
\newblock {\em Int. Math. Res. Not.}, pages Art. ID 83090, 33, 2006.

\bibitem[Wen06b]{Wen-FI}
S.~Wenger.
\newblock Filling invariants at infinity and the {E}uclidean rank of {H}adamard
  spaces.
\newblock {\em Int. Math. Res. Not.}, pages Art. ID 83090, 33, 2006.

\bibitem[Wen08a]{Wen_sharp}
S.~Wenger.
\newblock Gromov hyperbolic spaces and the sharp isoperimetric constant.
\newblock {\em Invent. Math.}, 171(1):227--255, 2008.

\bibitem[Wen08b]{Wen-Gro}
S.~Wenger.
\newblock A short proof of {G}romov's filling inequality.
\newblock {\em Proc. Amer. Math. Soc.}, 136(8):2937--2941, 2008.

\bibitem[Wen11]{Wen_AR}
S.~Wenger.
\newblock The asymptotic rank of metric spaces.
\newblock {\em Comment. Math. Helv.}, 86(2):247--275, 2011.

\bibitem[Whi83]{Whi-exist}
B.~White.
\newblock Existence of least-area mappings of {$N$}-dimensional domains.
\newblock {\em Ann. of Math. (2)}, 118(1):179--185, 1983.

\bibitem[Whi84]{Whi-hom}
B.~White.
\newblock Mappings that minimize area in their homotopy classes.
\newblock {\em J. Differential Geom.}, 20(2):433--446, 1984.

\end{thebibliography}


\end{document}